\newtheorem*{thm*}{Theorem}
\newtheorem{theorem}{Theorem}[section]
\newtheorem{lem}[theorem]{Lemma}
\newtheorem{lemma}[theorem]{Lemma}
\newtheorem{prop}[theorem]{Proposition}
\newtheorem{proposition}[theorem]{Proposition}
\newtheorem{cor}[theorem]{Corollary}
\newtheorem{corollary}[theorem]{Corollary}
\newtheorem{dfn}[theorem]{Definition}
\newtheorem{definition}[theorem]{Definition}
\newtheorem{problem}[theorem]{Problem}
\newtheorem{ques}[theorem]{Question}
\theoremstyle{remark}
\newtheorem{remark}{Remark}[section]
\newtheorem{remarks}{Remarks}[section]
\newtheorem{example}[remark]{Example}
\renewcommand{\rm}[1]{\mathrm{#1}}
\newcommand{\cal}[1]{\mathcal{#1}}
\newcommand{\bbC}{\mathbb{C}}
\newcommand{\bbF}{\mathbb{F}}
\newcommand{\bbN}{\mathbb{N}}
\newcommand{\bbR}{\mathbb{R}}
\newcommand{\bbT}{\mathbb{T}}
\newcommand{\bbZ}{\mathbb{Z}}
\newcommand{\one}{\mathbf{1}}
\renewcommand{\d}{\mathrm{d}}
\newcommand{\id}{\mathrm{id}}
\newcommand{\A}{\mathcal{A}}
\newcommand{\N}{\mathcal{N}}
\newcommand{\M}{\mathcal{M}}
\newcommand{\Z}{\mathcal{Z}}
\newcommand{\frH}{\mathfrak{H}}
\renewcommand{\a}{\alpha}
\newcommand{\eps}{\varepsilon}
\newcommand{\g}{\gamma}
\newcommand{\s}{\sigma}
\newcommand{\lin}{\mathrm{lin}}
\renewcommand{\hat}[1]{\widehat{#1}}
\newcommand{\ol}[1]{\overline{#1}}
\newcommand{\into}{\hookrightarrow}
\newcommand{\fin}{\nolinebreak\hspace{\stretch{1}}$\lhd$}
\renewcommand{\Re}{\operatorname{Re}}
\begin{document}

\title[Von Neumann nonconventional averages]{Nonconventional ergodic averages and multiple recurrence
for von Neumann dynamical systems}
\author{Tim Austin \and Tanja Eisner \and Terence Tao}
\thanks{T.A. is supported by a fellowship from Microsoft Corporation. T.E. is supported by the European Social Fund
and by the Ministry of Science, Research and the Arts
Baden-W\"urttemberg. T.T. is supported by NSF grant DMS-0649473
and a grant from the Macarthur Foundation.}
\date{}

\begin{abstract}  The Furstenberg recurrence theorem (or equivalently, Szemer\'edi's theorem) can be
formulated in the language of von Neumann algebras as follows: given
an integer $k \geq 2$, an abelian finite von Neumann algebra
$(\M,\tau)$ with an automorphism $\alpha: \M \to \M$, and a
non-negative $a \in \M$ with $\tau(a)>0$, one has $\liminf_{N \to
\infty} \frac{1}{N} \sum_{n=1}^N \Re \tau(a \alpha^n (a) \ldots
\alpha^{(k-1)n} (a)) > 0$; a subsequent result of Host and Kra shows
that this limit exists.  In particular, $\Re \tau(a \alpha^n (a)
\ldots \alpha^{(k-1)n} (a)) > 0$ for all $n$ in a set of positive
density.

From the von Neumann algebra perspective, it is thus natural to
ask to what extent these results remain true when the abelian
hypothesis is dropped. All three claims hold for $k = 2$, and we
show in this paper that all three claims hold for all $k$ when the
von Neumann algebra is asymptotically abelian, and that the last
two claims hold for $k=3$ when the von Neumann algebra is ergodic.
However, we show that the first claim can fail for $k=3$ even with
ergodicity, the second claim can fail for $k \geq 4$ even when assuming
ergodicity, and the third claim can fail for $k=3$ without
ergodicity, or $k \geq 5$ and odd assuming ergodicity. The second
claim remains open for non-ergodic systems with $k=3$, and the
third claim remains open for ergodic systems with $k=4$.
\end{abstract}

\maketitle

\tableofcontents

\section{Introduction}

\subsection{Multiple recurrence}

Let $(X, {\mathcal X}, \mu)$ be a probability space, and let $T: X
\to X$ be a measure-preserving invertible transformation on $X$
(i.e. $T, T^{-1}$ are both measurable, and $\mu(T(A)) = \mu(A)$
for all measurable $A$). From the mean ergodic theorem we know
that for any $f \in L^\infty(X)$, the averages\footnote{The minus
sign here is not of particular significance (other than to
conform to some minor notational conventions) and can be ignored
in the sequel if desired.} $\frac{1}{N} \sum_{n=1}^N f \circ
T^{-n}$ converge in (say) $L^2(X)$ norm, which implies in
particular that the averages $\frac{1}{N} \sum_{n=1}^N \int_X f_1
(f_2 \circ T^{-n})\ d\mu$ converge for all $f_1,f_2 \in
L^\infty(X)$.  Furthermore, if $f_1=f_2=f$ is non-negative with
positive mean $\int_X f\ d\mu > 0$, then the Poincar\'e recurrence
theorem implies that this latter limit is strictly positive.  In
particular, this implies that the mean $\int_X f (f \circ T^{-n})\
d\mu$ is positive for all natural numbers $n$ in a set $E \subset
\bbN$ of positive (lower) density (which means that $\liminf_{N \to
\infty} \frac{1}{N} \# \{ 1 \leq n \leq N: n \in E \} > 0$).

Thanks to a long effort starting with Furstenberg's groundbreaking
new proof \cite{Fur77} of Szemer\'edi's theorem on arithmetic
progressions \cite{Sze75}, it is now known that all of these
single recurrence results extend to multiple recurrence:

\begin{theorem}[Abelian multiple recurrence]\label{comm}  Let $(X, {\mathcal X},\mu)$ be a probability space, let
$k \geq 2$ be an integer, and let $T: X \to X$ be a measure-preserving invertible transformation.
\begin{itemize}
\item (Convergence in norm)  For any $f_1,\ldots,f_{k-1} \in L^\infty(X)$, the averages
$$ \frac{1}{N} \sum_{n=1}^N (f_1 \circ T^{-n}) \ldots (f_{k-1} \circ T^{-(k-1)n})$$
converge in $L^2(X)$ norm as $N \to \infty$.
\item (Weak
convergence)  For any $f_0,f_1,\ldots,f_{k-1} \in L^\infty(X)$,
the averages
$$ \frac{1}{N} \sum_{n=1}^N \int_X f_0 (f_1 \circ T^{-n}) \ldots (f_{k-1} \circ T^{-(k-1)n})\ d\mu$$
converge as $N \to \infty$.
\item (Recurrence on average)  For any non-negative $f \in L^\infty(X)$ with $\int_X f\ d\mu > 0$, one has
\begin{equation}\label{limf}
\liminf_{N \to \infty} \frac{1}{N} \sum_{n=1}^N \int_X f (f \circ T^{-n}) \ldots (f \circ T^{-(k-1)n})\ d\mu > 0.
\end{equation}
\item (Recurrence on a dense set)  For any non-negative $f \in L^\infty(X)$ with $\int_X f\ d\mu > 0$, one has
\begin{equation}\label{smash}
 \int_X f (f \circ T^{-n}) \ldots (f \circ T^{-(k-1)n})\ d\mu > c > 0
\end{equation}
for some $c>0$ and all $n$ in a set of natural numbers of positive lower density.
\end{itemize}
\end{theorem}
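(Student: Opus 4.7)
The plan is to treat the four assertions in decreasing order of difficulty. The recurrence on a dense set (the fourth item) will come essentially for free from the liminf assertion: writing $g_n := \int_X f (f \circ T^{-n}) \cdots (f \circ T^{-(k-1)n}) \, d\mu$, one has $0 \leq g_n \leq \|f\|_\infty^k$ uniformly, so if $\liminf_N \frac{1}{N}\sum_{n=1}^N g_n \geq 2\delta > 0$, then a pigeonhole forces $g_n > \delta$ on a set of lower density at least $\delta / \|f\|_\infty^k$. Thus the argument reduces to the first three items, and moreover (by Cauchy--Schwarz) the weak convergence claim will follow from the $L^2$ norm convergence claim applied to $f_1, \ldots, f_{k-1}$.

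For the norm convergence, I would follow the Host--Kra method. The strategy is to produce a characteristic factor: a $T$-invariant sub-$\sigma$-algebra $\mathcal{Z}_{k-2} \subseteq \mathcal{X}$ such that if $\mathbb{E}(f_j \mid \mathcal{Z}_{k-2}) = 0$ for some $1 \le j \le k-1$, then the averages tend to $0$ in $L^2$. The factor is built inductively using the Gowers--Host--Kra seminorms $\|\cdot\|_{U^{s}}$, whose control over the averages is established via a repeated van der Corput argument. One then identifies the factor $\mathcal{Z}_{k-2}$ as an inverse limit of $(k-2)$-step nilsystems, so that convergence on it follows from Leibman's equidistribution theorem on nilmanifolds, which in fact yields an explicit limit formula.

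For the liminf positivity (the third item), I would invoke Furstenberg's original structure theorem: any ergodic measure-preserving system is a (possibly transfinite) tower of compact and weakly mixing extensions over the trivial factor. The plan is to prove the multiple recurrence property at the trivial factor (where it is immediate), and then to show that both compact extensions and weakly mixing extensions preserve it. The compact-extension step uses a finite approximation in the fiber group combined with a van der Waerden--type colouring argument, while the weak-mixing step uses orthogonality and Cauchy--Schwarz to reduce to a strictly lower value of $k$. The non-ergodic case is then handled by passing to the ergodic decomposition.

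The principal obstacle is the structural analysis: producing the Host--Kra characteristic factor and identifying it as a nilsystem is a long inductive construction, and the parallel compact / weakly mixing induction for Furstenberg recurrence is genuinely intricate. Both difficulties are ultimately combinatorial-ergodic in nature, driven by the fact that the linear progressions $n, 2n, \ldots, (k-1)n$ entangle through van der Corput iteration in a way that forces one to climb a $(k-2)$-step nilpotent hierarchy; this is precisely the phenomenon that will \emph{fail} to have a clean analogue in the noncommutative setting, and which motivates the restrictions considered in the rest of the paper.
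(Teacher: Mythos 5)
The paper does not prove Theorem~\ref{comm}; it cites it as a classical result (Furstenberg~\cite{Fur77} and Furstenberg--Katznelson for recurrence, Host--Kra~\cite{HosKra05} and Ziegler~\cite{Zie07} for convergence), and your sketch correctly identifies exactly those two strands of argument. Your elementary reductions (recurrence on a dense set from recurrence on average via boundedness and pigeonhole; weak convergence from norm convergence via Cauchy--Schwarz) match the paper's own Remarks following the theorem verbatim, so nothing further to reconcile.
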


We have called this result the ``abelian'' multiple recurrence theorem to emphasise the abelian nature of the algebra
$L^\infty(X)$.

\begin{remarks}  Clearly, convergence in norm implies weak convergence; also, as the averages \eqref{smash} are
bounded and non-negative, recurrence on average implies recurrence
on a dense set.  Using the weak convergence result, the limit
inferior in \eqref{limf} can be replaced with a limit, but we have
retained the limit inferior in order to keep the two claims
logically independent of each other.

As mentioned earlier, the $k=2$ cases of Theorem \ref{comm} follow
from classical ergodic theorems. Furstenberg~\cite{Fur77}
established recurrence on average (and hence recurrence on a dense
set) for all $k$, and observed that this result was equivalent (by
what is now known as the \emph{Furstenberg correspondence
principle}) to Szemer\'edi's famous theorem \cite{Sze75} on
arithmetic progressions, thus providing an important new proof of
that theorem.  Convergence in norm (and hence in mean) was
established for $k=3$ by Furstenberg \cite{Fur77}, for $k=4$ by
Conze and Lesigne \cite{conze-lesigne}, \cite{conze2},
\cite{conze3} (assuming total ergodicity) and by Host and
Kra~\cite{HosKra01} (in general), for $k=5$ in some cases by
Ziegler \cite{Zie05}, and for all $k$ by Host and
Kra~\cite{HosKra05} (and subsequently also by Ziegler
\cite{Zie07}).  See \cite{kra-icm} for a survey of these results,
and their relation to other topics such as dynamics of
nilsequences, and arithmetic progressions in number-theoretic sets
such as the primes. \fin
\end{remarks}

There is also a multidimensional generalisation of the above results to multiple commuting shifts:

\begin{theorem}[Abelian multidimensional multiple recurrence]\label{comm-multid}  Let $(X, {\mathcal X},\mu)$ be a
probability space, let $k \geq 2$ be an integer, and let
$T_0,\ldots,T_{k-1}: X \to X$ be a commuting system of
measure-preserving invertible transformations.
\begin{itemize}
\item (Convergence in norm)  For any $f_1,\ldots,f_{k-1} \in L^\infty(X)$, the averages
$$ \frac{1}{N} \sum_{n=1}^N T_0^n( (f_1 \circ T_1^{-n}) \ldots (f_{k-1} \circ T_{k-1}^{-n}) )$$
converge in $L^2(X)$ norm.
\item (Weak convergence)  For any
$f_0,f_1,\ldots,f_{k-1} \in L^\infty(X)$, the averages
$$ \frac{1}{N} \sum_{n=1}^N \int_X (f_0 \circ T_0^{-n}) (f_1 \circ T_1^{-n}) \ldots (f_{k-1} \circ T_{k-1}^{-n})\ d\mu$$
converge.
\item (Recurrence on average)  For any non-negative $f \in L^\infty(X)$ with $\int_X f\ d\mu > 0$, one has
\begin{equation}\label{limf-multi}
\liminf_{N \to \infty} \frac{1}{N} \sum_{n=1}^N \int_X (f \circ T_0^{-n}) (f \circ T_1^{-n}) \ldots
(f \circ T_{k-1}^{-n})\ d\mu > 0.
\end{equation}
\item (Recurrence on a dense set)  For any non-negative $f \in L^\infty(X)$ with $\int_X f\ d\mu > 0$, one has
\begin{equation}\label{smash-multi}
 \int_X (f \circ T_0^{-n}) (f \circ T_1^{-n}) \ldots (f \circ T_{k-1}^{-n})\ d\mu > c > 0
\end{equation}
for some $c>0$ and all $n$ in a set of natural numbers of positive lower density.
\end{itemize}
\end{theorem}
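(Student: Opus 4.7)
The easy reductions come first: weak convergence is a consequence of norm convergence by $L^2$-boundedness (test against $f_0$ and use Cauchy--Schwarz), and recurrence on a dense set follows from recurrence on average, since the integrands in \eqref{smash-multi} are uniformly bounded by $\|f\|_\infty^k$, so a Cesàro mean bounded below forces the terms themselves to exceed a positive constant on a set of positive lower density. The substantive parts are therefore norm convergence and the lower bound \eqref{limf-multi}.

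For norm convergence, the plan is a van der Corput induction. Squaring the $L^2$-norm of the average and applying the van der Corput lemma to the outer summation variable, one bounds this norm by a Cesàro average over an auxiliary shift $h$ of integrals
$$\int_X \prod_{i=0}^{k-1}(f_i \circ T_i^{-n})\,\overline{(f_i \circ T_i^{-n-h})}\, d\mu,$$
in which, after relabelling, the differences $T_i T_j^{-1}$ can be absorbed into a new commuting system with one fewer ``active'' transformation. After finitely many iterations one reduces to the case of a single transformation, where the mean ergodic theorem applies. Equivalently, one identifies a sequence of characteristic factors built from cube seminorms adapted to the $\bbZ^k$-action, reducing convergence to the nilpotent (or, for small $k$, Kronecker) setting, where nilsequence asymptotics yield the limit.

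For the lower bound \eqref{limf-multi}, the plan is the Furstenberg--Katznelson inductive scheme over extensions of the $\bbZ^k$-action $G$ generated by $T_0,\ldots,T_{k-1}$. Call a $G$-system \emph{SZ} if \eqref{limf-multi} holds for every non-negative $f$ of positive integral. The steps are: (i) the trivial $G$-invariant system is SZ by Gallai's multidimensional van der Waerden theorem; (ii) SZ is preserved under isometric (compact) extensions, proved by covering the fibrewise orbit of $f$ by finitely many almost-periodic translates and applying Gallai to the induced finite colouring of $\bbN$; (iii) SZ is preserved under weakly mixing extensions of SZ systems, via a relative van der Corput argument showing that the weakly mixing contributions to the integrand vanish in Cesàro mean; (iv) SZ is preserved under inverse limits, by approximating $f$ in $L^1$ by functions measurable with respect to a factor in the tower. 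The Furstenberg--Zimmer structure theorem for the $G$-action expresses $X$ as a transfinite inverse limit of towers built from compact and weakly mixing extensions over the trivial factor, and transfinite induction closes the argument.

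The main obstacle is step (iii): unlike the one-dimensional case where one may take $T_0 = \id$ and $T_i = T^i$, here one cannot work on the orbit of $f$ under a single transformation, and the van der Corput iteration must exploit joint mixing of all the differences $T_i T_j^{-1}$ on the relative $L^2$-space of the weakly mixing extension. Tracking the appropriate Hilbert--Schmidt norms of the conditional expectations through the induction is the delicate technical point. Once this step is in hand, the remaining verifications are formal.
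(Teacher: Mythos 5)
A preliminary remark: the paper does not prove Theorem~\ref{comm-multid}; it is recalled as a background result with references --- Furstenberg--Katznelson \cite{FurKat78} for recurrence on average, Conze--Lesigne \cite{conze-lesigne} for norm convergence at $k=3$, Frantzikinakis--Kra \cite{FraKra05} under total ergodicity, and Tao \cite{Tao08(nonconv)} unconditionally (with alternative proofs in \cite{Tow07}, \cite{Aus--nonconv}, \cite{Hos??}). So there is no in-paper proof to compare you against; the remarks below assess your plan on its own terms.

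Your easy reductions and the Furstenberg--Katznelson scheme for recurrence on average are essentially sound, modulo the familiar subtlety that the extension dichotomy for a $\bbZ^k$-action passes through ``primitive'' extensions, which split $\bbZ^k$ into a part acting relatively compactly and a part acting relatively weakly mixingly, rather than the bare compact/weakly-mixing alternative; you correctly flag step (iii) as the locus of the real work. But the norm-convergence plan has a genuine gap, and it is precisely the gap that kept that part of the theorem open for almost thirty years after \cite{FurKat78}. The van der Corput induction you describe works for $k=3$: one step reduces to a $k=2$ scalar average along the single transformation $T_1^{-1}T_2$, so the invariant factor of $T_1^{-1}T_2$ is characteristic, and this is Conze--Lesigne. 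For $k\geq 4$, however, the step does not reduce to ``a commuting system with one fewer active transformation'' in a way that closes an induction: after the van der Corput step one must average over the shift $h$ a family of $(k-1)$-term scalar limits whose characteristic factors a priori depend on $h$ and are governed simultaneously by the invariant and isometric factors of \emph{all} the differences $T_iT_j^{-1}$; in a general (not totally ergodic) $\bbZ^k$-system these do not assemble into a single tower, so the iteration does not bottom out at the mean ergodic theorem. The fallback you offer --- cube seminorms for the $\bbZ^k$-action and reduction to nilsystems --- is not available either: there is no analogue of the Host--Kra structure theorem producing a single nilfactor characteristic for these averages for a general $\bbZ^k$-action, which is exactly why \cite{FraKra05} needs total ergodicity. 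The unconditional case was first proved by Tao \cite{Tao08(nonconv)} via a finitary transference argument of an entirely different flavour, with subsequent ergodic-theoretic proofs by Towsner \cite{Tow07} (nonstandard analysis), Austin \cite{Aus--nonconv} (sated extensions) and Host \cite{Hos??} (``magic'' extensions), each supplying a mechanism that the van der Corput and cube-seminorm machinery does not. You should regard the norm-convergence assertion as requiring a genuinely new idea, not a van der Corput bookkeeping step.
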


Of course, Theorem \ref{comm} is the special case of Theorem
\ref{comm-multid} when $T_i := T^i$.  It is often customary to
normalise $T_0$ to be the identity transformation (by replacing
each of the $T_i$ with $T_0^{-1} T_i$).

\begin{remarks}  The $k=2$ case is again classical.  Recurrence on average (and hence on a dense set) in this theorem
was established for all $k$ by Furstenberg and Katznelson
\cite{FurKat78}, which by the Furstenberg correspondence principle
implies a multidimensional version of Szemer\'edi's theorem, a
combinatorial proof of which in full generality has only been
obtained relatively recently in \cite{NagRodSch07} and \cite{Gow06}.
Convergence in norm (and weak convergence) was established for $k=3$
in \cite{conze-lesigne}, for some special cases of $k=4$ in
\cite{zhang-conv}, for all $k$ assuming total ergodicity in
\cite{FraKra05}, and for all $k$ unconditionally in
\cite{Tao08(nonconv)} (with subsequent proofs at \cite{Tow07},
\cite{Aus--nonconv}, \cite{Hos??}).  The results can fail if the
shifts $T_0,\ldots,T_{k-1}$ do not commute~\cite{blfail}.  Note that
non-commutativity of the shifts should \emph{not} be confused with
the non-commutativity of the underlying algebra, which is the focus
of this current paper. \fin
\end{remarks}

\subsection{Non-commutative analogues}

From the perspective of the theory of von Neumann algebras, the
space $L^\infty(X)$ appearing in the above theorems can be
interpreted as an abelian von Neumann algebra, with a finite trace
$\tau(f) := \int_X f\ d\mu$, and with an automorphism $T:
L^\infty(X)\to L^\infty(X)$ defined by $Tf := f \circ T^{-1}$.  It
is then natural to ask whether the above results can be extended to
non-abelian settings.  More precisely, we recall the following
definitions.

\begin{definition}[Non-commutative systems]  A \emph{finite von Neumann algebra} is a pair
$(\M, \tau)$, where $\M$ is a von Neumann algebra (i.e. an algebra
of bounded operators on a separable\footnote{In our applications,
the hypothesis of separability can be omitted, since one can always
pass to the separable subalgebra generated by a finite collection
$a_0,\ldots,a_{k-1}$ of elements and their shifts if desired.}
complex Hilbert space that contains the identity $1$, is closed
under adjoints, and is closed in the weak operator topology), and
$\tau: \M \to \bbC$ is a finite faithful trace (i.e. a linear map
with $\tau(a^*) = \overline{\tau(a)}$, $\tau(ab) = \tau(ba)$, and
$\tau(a^* a) \geq 0$ for all $a,b \in \M$, with $\tau(a^* a)=0$ if
and only if $a=0$ and $\tau(1)=1$). The operator norm of an element
$a \in \M$ is denoted $\|a\|$.  We say that an element $a \in \M$ is
\emph{non-negative} if one has $a=b^* b$ for some $b \in \M$.  An
element $a \in \M$ is \emph{central} if one has $ab=ba$ for all $b
\in \M$.  The set of all central elements is denoted $\Z(\M)$ and
referred to as the \emph{centre} of $\M$; the algebra $\M$ is
\emph{abelian} if $\Z(\M) = \M$.

A \emph{shift} $\a$ on a finite von Neumann algebra
$(\M,\tau)$ is trace-preserving $*$-automorphism, i.e. $\a$ is an
algebra isomorphism such that $\a(a^*) = \a(a)^*$ and $\tau(\a(a))
= \tau(a)$ for all $a \in \M$.  We say that the shift is
\emph{ergodic} if the invariant algebra $\{ a \in \M: \a(a) = a
\}$ consists only of the constants $\bbC 1$.  We refer to the
triple $(\M,\tau,\a)$ as a \emph{von Neumann $\bbZ$-system}, or
a \emph{von Neumann dynamical system}.  More generally, if
$\a_0,\ldots,\a_{k-1}$ are $k$ commuting shifts on $M$, we refer
to $(\M,\tau,\a_0,\ldots,\a_{k-1})$ as a \emph{von Neumann
$\bbZ^k$-system}.
\end{definition}

It is easy to verify that if $(X,{\mathcal X}, \mu)$ is a
(classical) probability space with a shift $T: X \to X$, then
$(L^\infty(X), \int_X \cdot\ d\mu, \circ T^{-1})$ is an (abelian
example of a) von Neumann dynamical system, and more generally if
$T_0,\ldots,T_{k-1}: X \to X$ are commuting shifts, then
$(L^\infty(X), \int_X \cdot\ d\mu, \circ T_0^{-1},\ldots, \circ
T_{k-1}^{-1})$ is an abelian example of a von Neumann
$\bbZ^k$-system. In fact, all abelian von Neumann dynamical systems
arise (up to isomorphism of the algebras) as such examples; see
Kadison and Ringrose~\cite[Chapter 5]{KadRin97}.

A finite von Neumann algebra $(\M,\tau)$ gives rise to an
inner product $\langle a, b\rangle := \tau(a^*b)$ on $\M$; the
properties of the trace ensure that this inner product is positive
definite. (We use the convention for a scalar product to be
conjugate linear in the first coordinate.) The Hilbert space
completion of $\M$ with respect to this inner product will be
referred to as $L^2(\tau)$. Note that $\a$ extends to a unitary
transformation on $L^2(\tau)$. In the abelian case when $\M =
L^\infty(X,{\mathcal X},\mu)$, then $L^2(\tau)$ can be canonically
identified with $L^2(X, {\mathcal X},\mu)$.

Inspired by Theorems \ref{comm}, \ref{comm-multid}, we now make
the following definitions:

\begin{definition}[Non-commutative recurrence and convergence]  Let $k \geq 2$ be an integer, $(\M,\tau,\a)$
be a von Neumann dynamical system, and $(\M,\tau,\a_0,\ldots,\a_{k-1})$ be a von Neumann $\bbZ^k$-system.
\begin{itemize}
\item We say that $(\M,\tau,\a)$ enjoys \emph{order $k$
convergence in norm} if for any $a_1,\ldots,a_{k-1} \in \M$, the
averages
$$ \frac{1}{N} \sum_{n=1}^N (\a^n (a_1)) (\a^{2n} (a_2)) \ldots (\a^{(k-1)n} (a_{k-1})) $$
converge in $L^2(\tau)$ as $N \to \infty$. \item We say that
$(\M,\tau,\a)$ enjoys \emph{order $k$ weak convergence} if for any
$a_0,a_1,\ldots,a_{k-1} \in \M$, the averages
$$ \frac{1}{N} \sum_{n=1}^N \tau( a_0 (\a^n (a_1)) (\a^{2n} (a_2)) \ldots (\a^{(k-1)n} (a_{k-1}))) $$
converge as $N \to \infty$. \item We say that $(\M,\tau,\a)$
enjoys \emph{order $k$ recurrence on average} if for any
non-negative $a \in \M$ with $\tau(a) > 0$ one has
\begin{equation}\label{reco}
 \liminf_{N \to \infty} \frac{1}{N} \sum_{n=1}^N \Re \tau(a (\a^n (a)) (\a^{2n} (a)) \ldots (\a^{(k-1)n} (a))) > 0.
 \end{equation}
\item We say that $(\M,\tau,\a)$ enjoys \emph{order $k$ recurrence
on a dense set} if for any non-negative $a \in \M$ with $\tau(a)
> 0$ one has
\begin{equation}\label{reco2}
 \Re \tau(a (\a^n (a)) (\a^{2n} (a)) \ldots (\a^{(k-1)n} (a))) > c > 0.
\end{equation}
for some $c>0$ and all $n$ in a set of natural numbers of positive
lower density. \item  We say that $(\M,\tau,\a_0,\ldots,\a_{k-1})$
enjoys \emph{convergence in norm} if for any $a_1,\ldots,a_{k-1}
\in \M$, the averages
$$ \frac{1}{N} \sum_{n=1}^N \a_0^{-n} ((\a_1^n (a_1)) (\a^{n}_2 (a_2)) \ldots (\a^{n}_{k-1} (a_{k-1}))) $$
converge in $L^2(\tau)$ as $N \to \infty$.
\item We say that
$(\M,\tau,\a_0,\ldots,\a_{k-1})$ enjoys \emph{weak convergence} if
for any $a_0,a_1,\ldots,a_{k-1} \in \M$, the averages
$$ \frac{1}{N} \sum_{n=1}^N \tau( (\a_0^n (a_0)) (\a_1^n (a_1)) (\a^{n}_2 (a_2)) \ldots (\a^{n}_{k-1} (a_{k-1}))) $$
converge as $N \to \infty$.
\item We say that
$(\M,\tau,\a_0,\ldots,\a_{k-1})$ enjoys \emph{recurrence on
average} if for any non-negative $a \in \M$ with $\tau(a) > 0$ one
has
\begin{equation}\label{jo}
 \liminf_{N \to \infty} \frac{1}{N} \sum_{n=1}^N \Re \tau((\a_0^n (a)) (\a^{n}_1 (a)) \ldots (\a^{n}_{k-1} (a))) > 0.
\end{equation}
\item We say that $(\M,\tau,\a)$ enjoys \emph{order $k$ recurrence
on a dense set} if for any non-negative $a \in \M$ with $\tau(a) >
0$ one has
\begin{equation}\label{jo2}
 \Re \tau((\a_0^n (a)) (\a^{n}_1 (a)) \ldots (\a^{n}_{k-1} (a))) > c > 0.
 \end{equation}
for some $c>0$ and all $n$ in a set of natural numbers of positive lower density.
\end{itemize}
\end{definition}

\begin{remark}  As before, we may normalise $\alpha_0$ to be the identity. Of course, the first four properties
here are nothing more than the specialisations of the last four to
the case $\alpha_i = \alpha^i$ for $0 \leq i \leq k-1$. The real
part is needed in \eqref{reco}, \eqref{reco2}, \eqref{jo},
\eqref{jo2} because there is no necessity for the traces here to
be real-valued (the difficulty being that the product of two
non-negative elements of a non-abelian von Neumann algebra need
not remain non-negative).  In the case of \eqref{reco}, one can
omit the real part by taking averages from $-N$ to $N$, since one
has the symmetry
\begin{align*}
\overline{ \tau(a (\a^n (a)) (\a^{2n} (a)) \ldots (\a^{(k-1)n}
(a))) } &=
 \tau( (a (\a^n (a)) (\a^{2n} (a)) \ldots (\a^{(k-1)n} (a)))^*) \\
 &= \tau( (\a^{(k-1)n} (a)) \ldots (\a^{2n} (a)) (\a^n (a)) a ) \\
 &= \tau( a (\a^{-n} (a)) \ldots (\a^{-(k-1)n} (a)) )
\end{align*}
for any self-adjoint $a$.

Note however that it is quite possible for the expressions
\eqref{reco2}, \eqref{jo2} to be negative even when $a$ is
non-negative.  Because of this, while recurrence on average still
implies recurrence on a dense set, the converse is not true; one can
have recurrence on a dense set but end up with a zero or even
negative average due to the presence of large negative values of
\eqref{reco2} or \eqref{jo2}. We will see examples of this later in
this paper. \fin
\end{remark}

\begin{remark} As mentioned earlier, the Furstenberg correspondence principle equates recurrence results with a
combinatorial statements (such as Szemer\'edi's theorem) which can
be formulated in a purely finitary fashion. However, we do not know
whether the same is true for non-commutative recurrence results.
Formulating a finitary statement that would imply recurrence results
for some non-abelian von Neumann dynamical system probably requires
some quite strong approximate embeddability of the system into
finite-dimensional matrix algebras with approximate shifts, together
with a recurrence assertion for such finite-dimensional systems in
which the various parameters may all be chosen independent of the
dimension.  Since many of the results we prove below in the
infinitary setting are negative anyway, we will not pursue this
issue here.\fin
\end{remark}

The study of these properties (and related topics) for von Neumann
dynamical systems has been pursued by Niculescu, Str\"oh and
Zsid\'o~\cite{NicStrZsi03}, Duvenhage~\cite{Duv09}, Beyers,
Duvenhage and Str\"oh~\cite{BeyDuvStr07}, and Fidaleo \cite{fidaleo28}.  A variant of these questions, in which one averages over a higher-dimensional range of shifts, was also studied in \cite{fidaleo27}.  In this paper we shall
develop further positive and negative results regarding these
properties, which we now present.

\subsection{Positive results}

We first remark that when $k=2$, all systems enjoy norm and weak
convergence, as well as recurrence on average and on a dense set,
thanks to the ergodic theorem for von Neumann algebras (see e.g.
\cite[Section 9.1]{Kre85}).  Indeed, from that theorem, we know that
for any von Neumann dynamical system $(\M,\tau,\a)$ and $a \in \M$,
the averages $\frac{1}{N} \sum_{n=1}^N \a^n(a)$ converge in
$L^2(\tau)$ to the orthogonal projection of $a$ to the invariant
space $L^2(\tau)^\a := \{ f \in L^2(\tau): \a (f) = f \}$, giving
the convergence results. If $a$ is non-negative and non-zero, this
projection can be verified to have a positive inner product with
$a$, giving the recurrence results.

Now we consider the cases $k \geq 3$.  We have already seen from
Theorems \ref{comm}, \ref{comm-multid} that we have convergence and
recurrence in those abelian systems arising from ergodic theory, and
have recalled above that in fact these include all examples (up to
isomorphism).

\begin{proposition}\label{thm:ab-aves}  Let $k \geq 2$.
If $(\M, \tau, \a)$ is an abelian von Neumann dynamical system,
then $(\M,\tau,\a)$ enjoys weak convergence and convergence in
norm, and recurrence on average and on a dense set.

More generally, if $(\M,\tau,\a_0,\ldots,\a_{k-1})$ is an abelian
von Neumann $\bbZ^k$-system, then this $\bbZ^k$-system enjoys weak
convergence and convergence in norm, and recurrence on average and
on a dense set.
\end{proposition}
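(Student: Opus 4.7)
The plan is to reduce Proposition~\ref{thm:ab-aves} directly to its classical measure-theoretic counterparts, Theorems~\ref{comm} and~\ref{comm-multid}, via a point realisation of abelian finite von Neumann algebras. Essentially no new ergodic theory is required: the four conclusions already hold in the commutative setting, and it suffices to translate the hypotheses from the von Neumann formulation into the measure-theoretic one.

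First, I would invoke the standard structure theorem for abelian finite von Neumann algebras (Kadison--Ringrose, Chapter~5): for a separable such algebra $(\M,\tau)$ there is a standard probability space $(X,\mathcal{X},\mu)$ and a normal trace-preserving $*$-isomorphism $\Phi : \M \to L^\infty(X,\mathcal{X},\mu)$, so that $\tau(a) = \int_X \Phi(a)\,d\mu$ for every $a\in\M$. This $\Phi$ extends to an isometric Hilbert space isomorphism $L^2(\tau) \cong L^2(X,\mathcal{X},\mu)$. The separability assumption is harmless here: as noted in the footnote to the definition of von Neumann dynamical systems, one may restrict attention to the separable sub-von Neumann algebra generated by the finitely many operators in question together with all of their $\a_i$-shifts, on which all the statements to be proved live.

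Next, I would realise each $\a_i$ at the level of points. The conjugated automorphism $\tilde\a_i := \Phi \circ \a_i \circ \Phi^{-1}$ of $L^\infty(X,\mathcal{X},\mu)$ preserves integration, and by the usual coordinatisation argument (either through the Gelfand spectrum of a weakly dense unital $C^*$-subalgebra, or directly via a Mackey-type point realisation theorem) it takes the form $f \mapsto f\circ T_i^{-1}$ for a measure-preserving invertible bimeasurable transformation $T_i:X\to X$. Commutation of the $\a_i$ transfers (up to modification on a $\mu$-null set, adjusted in the standard way) to commutation of the $T_i$, yielding a genuine measure-preserving $\bbZ^k$-action on $(X,\mathcal{X},\mu)$.

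With this setup in place, the non-commutative averages in all four conditions translate, under $\Phi$, into pointwise products of $L^\infty$-translates of $\Phi(a_j)$, and the hypotheses ``$a$ non-negative with $\tau(a)>0$'' become the classical ``$f:=\Phi(a)\ge 0$ a.e. with $\int_X f\,d\mu > 0$''; note that $\Re$ may be dropped throughout, as the abelian averages are automatically real and non-negative. Theorems~\ref{comm} and~\ref{comm-multid} then deliver convergence in norm, weak convergence, recurrence on average and recurrence on a dense set, completing the proof. The only nontrivial step is the point realisation of the commuting shifts; everything else is a routine transcription.
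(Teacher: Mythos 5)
Your proposal is correct and matches the paper's (implicit) proof: the paper states just before the proposition that all abelian von Neumann dynamical systems arise, up to isomorphism, from classical measure-preserving systems (citing Kadison–Ringrose, Chapter~5), and then treats Proposition~\ref{thm:ab-aves} as an immediate consequence of Theorems~\ref{comm} and~\ref{comm-multid}. You have simply spelled out the details of that reduction — the point realisation of the algebra, the coordinatisation of the commuting automorphisms, and the transcription of the four conclusions — which the paper leaves to the reader.
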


We now generalise these results to the wider class of
\emph{asymptotically abelian} systems.

\begin{dfn}[Asymptotic abelianness]\label{dfn:asympAb}
A von Neumann dynamical system $(\M,\tau,\a)$ is
\emph{asymptotically abelian} if one has
$$ \lim_{N \to \infty} \frac{1}{N}\sum_{n=1}^N \|[\a^n(a),b]\|_{L^2(\tau)} = 0$$
for all $a,b \in \M$, where $[a,b] := ab-ba$ is the commutator.
\end{dfn}

\begin{remark} In previous literature such as \cite{BeyDuvStr07}, a stronger version of asymptotic abelianness is
assumed, in which the $L^2(\tau)$ norm is replaced by the operator norm. Variants of this type of ``topological asymptotic abelianness'', and their relationship with non-commutative topological weak mixing have also been considered in \cite{kerr-li}.
\fin
\end{remark}

Our work also singles out this case as special, since the assumption
of asymptotic abelianness seems to be essential for the correct
working of some the chief technical tools taken from the commutative
setting (particularly the van der Corput estimate). In the previous
works \cite{NicStrZsi03}, \cite{BeyDuvStr07}, \cite{Duv09},
convergence and recurrence were shown for all orders $k$ for
asymptotically abelian systems under some additional assumptions
such as weak mixing or compactness.  Our first main result shows
that in fact all asymptotically abelian systems enjoy convergence
and recurrence.

\begin{theorem}\label{thm:asympAb-aves}  Let $k \geq 2$.
If $(\M, \tau, \a)$ is an asymptotically abelian von Neumann
dynamical system, then $(\M,\tau,\a)$ enjoys weak convergence and
convergence in norm, and recurrence on average and on a dense set.

More generally, if $(\M,\tau,\a_0,\ldots,\a_{k-1})$ is a von
Neumann $\bbZ^k$-system, and the $\a_i \a_j^{-1}$ for $i \neq j$
are each individually asymptotically abelian, then this
$\bbZ^k$-system enjoys weak convergence and convergence in norm,
and recurrence on average and on a dense set.
\end{theorem}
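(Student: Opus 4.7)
The plan is to extend the abelian proofs of Proposition \ref{thm:ab-aves} to the asymptotically abelian setting by treating commutators as averaged error terms. The $\bbZ$-system case is a special case of the $\bbZ^k$-system case (via $\a_i := \a^i$), and after replacing each $\a_i$ by $\a_0^{-1}\a_i$ we may assume $\a_0 = \id$, so the hypothesis becomes that each $\a_i$ and each $\a_i\a_j^{-1}$ for distinct $i,j$ is asymptotically abelian. Combining this with the trace Cauchy--Schwarz bound $|\tau(YZ)| \leq \|Y\|_{L^2(\tau)}\|Z\|_{L^2(\tau)}$ yields the master estimate
$$
\frac{1}{N}\sum_{n=1}^N \bigl|\tau\bigl(P\,[\a_i^n(b),\a_j^n(c)]\,Q\bigr)\bigr| \;\longrightarrow\; 0 \qquad (i \ne j)
$$
for arbitrary $P,Q,b,c \in \M$, so that shifted factors $\a_i^n(\cdot)$ and $\a_j^n(\cdot)$ may be freely permuted inside a trace up to averaged $o(1)$ error.

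Convergence in norm is proved by induction on $k$, with the base case $k=2$ being the von Neumann mean ergodic theorem. For the inductive step set $x_n := \a_1^n(a_1)\cdots\a_{k-1}^n(a_{k-1})$ and apply the Hilbert-space van der Corput inequality in $L^2(\tau)$. The inner product $\langle x_n, x_{n+h}\rangle = \tau(x_n^* x_{n+h})$, after applying the trace-preserving shift $\a_1^{-n}$, becomes the trace of a product whose ``$h$-dependent piece'' $a_1^*\,\a_1^h(a_1)$ can, by repeated use of the master estimate, be moved past the remaining $(\a_1^{-1}\a_i)^n$-shifted factors, and each pair of those shifted factors then combined; this reduces, up to $o(1)$ on averaging in $n$, to an order $k-1$ weak nonconventional average for the $\bbZ^{k-2}$-system with commuting shifts $\a_1^{-1}\a_i$ for $2 \le i \le k-1$. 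These shifts still satisfy the pairwise asymptotic abelianness hypothesis (their pairwise ratios coincide with the original $\a_i\a_j^{-1}$, and each individually equals $\a_i\a_1^{-1}$), so the induction hypothesis gives convergence in $h$; van der Corput then delivers the Cauchy property of $\frac{1}{N}\sum_n x_n$ in $L^2(\tau)$. Weak convergence follows from norm convergence by Cauchy--Schwarz against any $a_0 \in \M$.

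For recurrence on average with non-negative $a \in \M$ and $\tau(a) > 0$, the plan is to realise the limiting Cesaro average as a classical multidimensional Furstenberg average in a commutative model and then invoke Proposition \ref{thm:ab-aves}. The key observation is that the shifted copies $\a_0^n(a),\a_1^n(a),\ldots,\a_{k-1}^n(a)$ carry distinct ``colours'' $i = 0,1,\ldots,k-1$, and by the master estimate their pairwise commutators vanish in $L^2(\tau)$ on averaging in $n$. One therefore constructs an abelian von Neumann system $(\mathcal{N},\widetilde\mu,T_0,\ldots,T_{k-1})$ with commuting measure-preserving shifts and a non-negative $f \in \mathcal{N}$ with $\int f\,d\widetilde\mu = \tau(a)$ for which
$$
\lim_{N \to \infty} \frac{1}{N}\sum_{n=1}^N \Re\tau\bigl(\a_0^n(a)\a_1^n(a)\cdots\a_{k-1}^n(a)\bigr) \;=\; \lim_{N \to \infty} \frac{1}{N}\sum_{n=1}^N \int (f\circ T_0^{-n})\cdots(f\circ T_{k-1}^{-n})\,d\widetilde\mu.
$$
Concretely, this model is built by introducing formal variables $z_{\vec m}$ for $\vec m \in \bbZ^k$ representing $\a_0^{m_0}\cdots\a_{k-1}^{m_{k-1}}(a)$, forming an appropriate Cesaro-limit state on polynomials that (via the master estimate) factors through the commutative polynomial $*$-algebra, applying the Hausdorff moment problem to obtain $\widetilde\mu$ on a compact cube, and noting that trace-invariance of $\tau$ under each $\a_i$ induces the commuting shifts $T_i$. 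Proposition \ref{thm:ab-aves} then supplies the strictly positive lower bound on the right-hand side, and hence on the left. Recurrence on a dense set follows from recurrence on average since the averaged quantities are uniformly bounded.

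The main technical obstacle is twofold. First, the van der Corput step of Paragraph 2 requires careful bookkeeping of every commutator arising when rearranging shifted factors, and repeated use of the master estimate to confirm each contributes only $o(1)$ on $n$-averaging. Second, the construction of the commutative model $(\mathcal{N},\widetilde\mu,T_i)$ in Paragraph 3 is delicate, because a naive Cesaro average of a state on polynomials in the generators preserves $L^2(\tau)$ norms of commutators between variables at the ``same'' index, and so does not straightforwardly factor through the commutative polynomial algebra; a more careful construction exploiting the pairwise asymptotic abelianness of $\a_i\a_j^{-1}$ at the distinct indices appearing in the nonconventional product is needed to simultaneously render $\widetilde\mu$ commutative and equip it with the $k$ commuting measure-preserving shifts inherited from the $\a_i$'s.
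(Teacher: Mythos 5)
There is a genuine gap, and it runs through both halves of your argument: you never identify a \emph{characteristic subalgebra} onto which the averages can be projected, and the van der Corput machinery cannot produce convergence without one.

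For convergence, the van der Corput lemma for Hilbert spaces delivers only the conclusion $\frac{1}{N}\sum_n x_n \to 0$, and only under the hypothesis $\frac{1}{H}\sum_{h=1}^H\limsup_N\bigl|\frac{1}{N}\sum_n\langle x_n,x_{n+h}\rangle\bigr|\to 0$. If the limits $\gamma_h := \lim_N\frac{1}{N}\sum_n\langle x_n,x_{n+h}\rangle$ merely exist and converge in Cesaro to a nonzero value (which is exactly what happens when the nonconventional average has a nonzero limit, e.g.\ for $a_i \in \Z(\M)$), van der Corput says nothing; in particular it does \emph{not} ``deliver the Cauchy property.'' A bare-hands scalar example: $x_n := +1$ for $n\in[2^{2k},2^{2k+1})$ and $x_n := -1$ for $n\in[2^{2k+1},2^{2k+2})$ has $\gamma_h = 1$ for every $h$, yet $\frac{1}{N}\sum_n x_n$ oscillates between roughly $+\frac13$ and $-\frac13$. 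So your induction does not close. What is needed is precisely what the paper supplies: first split each $a_i$ as $E_{\Z(\M)}(a_i) + (a_i - E_{\Z(\M)}(a_i))$, then use van der Corput together with relative weak mixing (Theorem~\ref{thm:rel-w-m-centre}, which shows asymptotic abelianness forces weak mixing relative to the centre) to show the orthogonal pieces contribute $o(1)$ (this is Theorem~\ref{thm:rel-w-m-red}), and finally apply Proposition~\ref{thm:ab-aves} to the genuinely abelian system $(\Z(\M),\tau|_{\Z(\M)},\a_i|_{\Z(\M)})$. Your rearrangement step is on the right track and is close to what the appendix of the paper does, but it proves a reduction to conditional expectations onto a fixed subalgebra, not convergence outright; absent that subalgebra you have nothing to close the induction.

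For recurrence, the moment-problem construction of an abelian model does not get off the ground, and you essentially admit this in your final paragraph. The Cesaro state on the free $*$-algebra in the variables $z_{\vec m}$ does not factor through the commutative quotient: the master estimate kills commutators of the form $[\a_i^n(b),\a_j^n(c)]$ \emph{after averaging over $n$}, but the relations you must annihilate to pass to the commutative polynomial ring are all the $[z_{\vec m},z_{\vec m'}]$ for $\vec m \ne \vec m'$, i.e.\ single commutators of the form $\bigl[\prod_i\a_i^{m_i}(a),\prod_i\a_i^{m'_i}(a)\bigr]$ with no free parameter to average. Trace-invariance under the $\a_i$ gives $T_i$-invariance of the state but does not provide any averaging that would make these vanish. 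The paper does not face this problem: once the reduction to $\Z(\M)$ is in hand, the commutative probability space and the commuting shifts come for free (they are the restriction of $\tau$ and of the $\a_i$ to $\Z(\M)$), and no moment problem is needed. In short, both your gaps are the same gap — you are trying to prove the theorem without the structure theorem that drives it.
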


Theorem \ref{thm:asympAb-aves} is deduced from the genuinely
abelian case (Proposition \ref{thm:ab-aves}) using two results.
The first is essentially from \cite{BeyDuvStr07} or \cite{Duv09},
which considered the model case $\a_i = \a^i$; for the sake of
completeness, we present a proof in Appendix \ref{duv}.

\begin{theorem}[Multiple ergodic averages for relatively weakly mixing extensions]\label{thm:rel-w-m-red} Let
$(\M,\tau,\a_0,\ldots,\a_{k-1})$ be a von Neumann $\bbZ^k$-system,
and let $\N$ be a von Neumann subalgebra of $\M$ which is invariant
under all of the $\a_i$.  If for any distinct $0 \leq i,j \leq k-1$
the shift $\a_i\a_j^{-1}$ is asymptotically abelian and weakly
mixing relative to $\N$, then the associated multiple ergodic
averages satisfy
\[\Big\|\frac{1}{N}\sum_{n=1}^N \a_0^{-n} \prod_{i=1}^{k-1} \a_i^n(a_i) -
\frac{1}{N}\sum_{n=1}^N \a_0^{-n} \prod_{i=1}^{k-1}
\a_i^n(E_\N(a_i))\Big\|_{L^2(\tau)} \to 0\] as $N\to\infty$, where $E_\N:\M\to\N$
is the conditional expectation constructed from $\tau$, and the products are from left to right.
\end{theorem}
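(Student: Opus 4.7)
The plan is to first normalize $\a_0=\id$ by replacing each $\a_i$ with $\a_0^{-1}\a_i$, which preserves the hypotheses since the $\a_i$ commute. By multilinearity and a standard telescoping of $a_i = E_\N(a_i) + (a_i - E_\N(a_i))$, the desired estimate reduces to the one-sided statement
$$(\star)\quad \Big\|\frac{1}{N}\sum_{n=1}^N\prod_{i=1}^{k-1}\a_i^n(a_i)\Big\|_{L^2(\tau)} \to 0 \text{ whenever } E_\N(a_{i_0}) = 0 \text{ for some } i_0.$$
I would prove $(\star)$ by induction on $k$. The base case $k=2$ is immediate: the von Neumann mean ergodic theorem gives convergence of $\frac{1}{N}\sum_n\a_1^n(a_1)$ to the orthogonal projection of $a_1$ onto the invariant subspace $L^2(\tau)^{\a_1}$, and relative weak mixing of $\a_1$ over $\N$ places this invariant subspace inside $L^2(\N)$, so $E_\N(a_1)=0$ forces the projection to vanish.

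For the inductive step I apply the van der Corput inequality in $L^2(\tau)$ with $v_n := \prod_{i=1}^{k-1}\a_i^n(a_i)$, which reduces $(\star)$ to showing
$$\lim_{H\to\infty}\frac{1}{H}\sum_{h=1}^H\limsup_{N\to\infty}\Big|\frac{1}{N}\sum_{n=1}^N\tau(v_n^* v_{n+h})\Big| = 0.$$
Expanding the trace produces $2(k-1)$ shifted factors in unfavourable order. The identity $[\a_i^n(x),\a_j^n(y)] = \a_j^n([(\a_j^{-1}\a_i)^n(x),y])$ combined with asymptotic abelianness of each $\a_i\a_j^{-1}$ shows that any single transposition of two non-matching factors inside $\tau$ produces a commutator whose $L^2(\tau)$-norm has vanishing Cesàro mean in $n$. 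Iterating these transpositions (and using the cyclicity of $\tau$ to move pairs into position) brings each matching pair $\a_i^n(a_i^*)\cdot\a_i^{n+h}(a_i) = \a_i^n(b_i(h))$ together, where $b_i(h) := a_i^*\a_i^h(a_i)$, yielding
$$\frac{1}{N}\sum_{n=1}^N\tau(v_n^* v_{n+h}) = \frac{1}{N}\sum_{n=1}^N\tau\Big(\prod_{i=1}^{k-1}\a_i^n(b_i(h))\Big) + o_{N\to\infty}(1).$$

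To close the induction, use $\tau\circ\a_{i_0}^{-n}=\tau$ and commutativity of the shifts to rewrite the main term as $\tau\bigl(b_{i_0}(h)\cdot\frac{1}{N}\sum_n\prod_{i\ne i_0}(\a_{i_0}^{-1}\a_i)^n(b_i(h))\bigr)$. The inner $N$-average is a $(k-2)$-fold multiple ergodic average associated to the shifts $\a_{i_0}^{-1}\a_i$, which still satisfy all the hypotheses of the theorem; the inductive hypothesis therefore lets me replace each $b_i(h)$, $i\ne i_0$, by $E_\N(b_i(h))$ at $L^2$-cost vanishing with $N$. After this replacement the inner average lies in $L^2(\N)$, so the module identity $\tau(xy)=\tau(E_\N(x)\,y)$ for $y\in\N$ applies, and Cauchy--Schwarz bounds the whole expression by $C\,\|E_\N(b_{i_0}(h))\|_{L^2(\tau)}$ for a constant $C$ depending only on the $\|a_i\|$. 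Finally, relative weak mixing of $\a_{i_0}$ over $\N$ asserts precisely that $\frac{1}{H}\sum_{h=1}^H\|E_\N(a^*\a_{i_0}^h(a))\|_{L^2(\tau)}^2\to 0$ whenever $E_\N(a)=0$; applying this with $a=a_{i_0}$, together with Cauchy--Schwarz in $h$, delivers the required vanishing. The main technical obstacle is the rearrangement step: one has to track a uniformly bounded but potentially large number of simultaneous commutator corrections inside the trace and show that their Cesàro means in $n$ collapse to zero, which is precisely where asymptotic abelianness (rather than any weaker ergodic-type mixing hypothesis) is essential.
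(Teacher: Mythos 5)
Your proposal is correct and follows essentially the same route as the paper's proof in Appendix A: normalize $\a_0=\id$, telescope via $a_i=E_\N(a_i)+(a_i-E_\N(a_i))$, induct on $k$ with the mean ergodic theorem handling $k=2$, apply van der Corput, use asymptotic abelianness of the $\a_i\a_j^{-1}$ to commute the $2(k-1)$ factors into matching pairs $\a_i^n(a_i^*\a_i^h(a_i))$ at vanishing Ces\`aro cost, pull out the distinguished factor via trace invariance to expose a lower-order average to which the inductive hypothesis applies, and finish with the conditional-expectation module identity, Cauchy--Schwarz, and the relative weak mixing hypothesis. The only cosmetic differences are that you record the commutator identity $[\a_i^n(x),\a_j^n(y)]=\a_j^n([(\a_j^{-1}\a_i)^n(x),y])$ explicitly and work with $b_i(h)=a_i^*\a_i^h(a_i)$ rather than its adjoint, neither of which changes the argument.
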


We will recall the notions of relative weak mixing and conditional expectation in Section \ref{sec:FurZim}.

The second result, which is new and may have other applications
elsewhere, can be viewed as a partial analogue of the
Furstenberg-Zimmer structure theorem~\cite{FKO} for asymptotically
abelian systems.

\begin{theorem}[Structure theorem for asymptotically abelian systems]\label{thm:rel-w-m-centre}
If $(\M,\tau,\alpha)$ is an asymptotically abelian von
Neumann dynamical system, then $\a$ is weakly mixing relative to the
centre $\Z(\M)\subset \M$.
\end{theorem}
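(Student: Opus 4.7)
The plan is to reformulate weak mixing of $\a$ relative to $\Z(\M)$ as the non-existence of non-trivial $\a$-invariant ``almost periodic'' Hilbert $\Z(\M)$-submodules of $L^2(\tau)\ominus L^2(\Z(\M))$, and to rule these out by means of the asymptotic abelianness hypothesis, via a short calculation that reduces the presence of any such submodule to a single ``relative eigenvector''.

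\emph{Reduction.} Using the natural $\Z(\M)$-valued inner product $\langle x,y\rangle_{\Z(\M)} := E_{\Z(\M)}(x^*y)$, relative weak mixing over $\Z(\M)$ will be characterized (in analogy with Furstenberg--Zimmer) as the statement that $L^2(\tau)\ominus L^2(\Z(\M))$ contains no non-zero finite-rank $\Z(\M)$-submodule invariant under $\a$. A spectral decomposition of $\a$ acting on such a submodule---implemented either through a direct integral decomposition of $\M$ over its centre (on which $\a$ acts as a measure-preserving transformation of the base) or through a Jacobs--Glicksberg--de Leeuw-type splitting adapted to the twist $\a|_{\Z(\M)}$---reduces the task to ruling out a single non-zero $v\in\M$ with $E_{\Z(\M)}(v)=0$ and $\a(v)=zv$ for some unitary $z\in\Z(\M)$.

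\emph{Key computation.} Given such a $v$, iteration yields
\[ \a^n(v) = u_n\,v \qquad\text{where}\qquad u_n := \prod_{j=0}^{n-1}\a^j(z)\in\Z(\M), \]
with each $u_n$ unitary in $\Z(\M)$ since every factor is. Because $u_n$ is central it commutes with every $b\in\M$, giving
\[ [\a^n(v),b] \;=\; u_n\,v\,b - b\,u_n\,v \;=\; u_n(vb-bv) \;=\; u_n\,[v,b]. \]
Left-multiplication by a unitary element of $\Z(\M)$ is an $L^2(\tau)$-isometry, so
\[ \|[\a^n(v),b]\|_{L^2(\tau)} \;=\; \|[v,b]\|_{L^2(\tau)} \qquad\text{for every } n\geq 1. \]
Averaging over $n$ and invoking the asymptotic abelianness of $\a$ forces $\|[v,b]\|_{L^2(\tau)}=0$ for every $b\in\M$. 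Hence $v\in\Z(\M)$, contradicting $E_{\Z(\M)}(v)=0$; no non-trivial relative eigenvector exists, and $\a$ is weakly mixing relative to $\Z(\M)$.

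The main obstacle is the preparatory reduction, since the $\Z(\M)$-action on an $\a$-invariant Hilbert submodule is intertwined with, rather than fixed by, the restriction $\a|_{\Z(\M)}$, so the usual spectral theorem for unitaries on finite-rank modules over a commutative base does not apply verbatim; one has to pass through the action of $\a$ on the centre, either by direct integration or by splitting off the almost-periodic part of the extension explicitly. Once this reduction is set up, the eigenvector contradiction above is short and uses asymptotic abelianness in the essential way --- each summand of the ergodic average of commutators is already constant in $n$, so convergence to zero collapses the commutator itself.
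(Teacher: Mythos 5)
Your key computation — pulling the central unitary coefficients through the commutator $[\a^n(v),b]$, exploiting the resulting $L^2(\tau)$-isometry, and letting asymptotic abelianness kill the Ces\`aro average — is exactly the mechanism that drives the paper's Proposition~\ref{prop:f.r.central}, so you have identified the right engine. But the preparatory reduction to a single relative eigenvector $\a(v)=zv$, $z$ a central unitary, does not go through in general, and the paper's argument is built precisely to avoid that step. What Proposition~\ref{prop:nonrelind-to-invar} and Lemma~\ref{lem:concrete-repn-of-module} actually produce is a finite-rank $U_\a$-invariant right-$\Z(\M)$-submodule $V$ with an orthogonal basis $\xi_1,\ldots,\xi_r$ and a unitary cocycle $U=(u_{ji})\in\cal{U}_{r\times r}(\Z(\M))$ satisfying $U_\a(\xi_i)=\sum_j\xi_ju_{ji}$. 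Passing from this matrix cocycle to a rank-one invariant sub-module is a measurable cocycle-reduction problem (diagonalizing a $\cal{U}(r)$-valued cocycle over the base transformation on the spectrum of $\Z(\M)$), and such a reduction is not available in general; neither the Jacobs--Glicksberg--de Leeuw machinery nor the direct-integral picture guarantees a measurable equivariant line field inside $V$. This is exactly why, already in commutative relativized Furstenberg--Zimmer theory, the basic objects are finite-rank modules rather than conditional eigenvectors.

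The fix is to run your calculation against the matrix cocycle directly, as the paper does: writing $\xi_i=\sum_{j=1}^rU_\a^n(\xi_j)\a^n(u_{ji}^{(-n)})$ and using that each coefficient $\a^n(u_{ji}^{(-n)})$ is central (so it passes through the commutator with $b$) and is a contraction (every entry of a unitary matrix has operator norm $\leq 1$), one gets
\[ \|[b,\xi_i]\|_{L^2(\tau)} \;\leq\; \sum_{j=1}^r\frac{1}{N}\sum_{n=1}^N\|[b,U_\a^n(\xi_j)]\|_{L^2(\tau)} \;\longrightarrow\; 0, \]
with no diagonalization needed; this is exactly your estimate generalized from a scalar to a matrix of central contractions. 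Two smaller gaps in the write-up: the $\xi_i$ live a priori only in $L^2(\tau)$, not $\M$, so you must first extend the averaging estimate $\frac{1}{N}\sum_{n=1}^N\|[\a^n(a),b]\|_{L^2(\tau)}\to 0$ from $a\in\M$ to arbitrary $\xi\in L^2(\tau)$ by density (your $v\in\M$ is not automatic); and the conclusion ``hence $v\in\Z(\M)$'' for a mere vector $v\in L^2(\tau)$ requires a separate argument that central vectors all lie in $\overline{\Z(\M)\hat 1}$ (this is Lemma~\ref{lem:no-nonobvious-centrals}, which uses the bicommutant characterization of the centre and a spectral approximation).
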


\begin{remark} In the case when $\M$ is a factor (i.e. the centre is trivial), results of this nature (with a slightly different notion of mixing, and of asymptotic abelianness) was established in \cite[Example 4.3.24]{bratteli}.
\end{remark}

These results quickly imply Theorem \ref{thm:asympAb-aves}.  Indeed,
when studying (for instance) convergence in norm for a
$\bbZ^k$-system, one can use Theorem \ref{thm:rel-w-m-centre}
followed by Theorem \ref{thm:rel-w-m-red} to replace each of the
$a_0,\ldots,a_{k-1}$ by their conditional expectations $E_{\Z(\M)}
(a_0),\ldots,E_{\Z(\M)} (a_{k-1})$ without any affect on the
convergence, at which point one can apply Proposition
\ref{thm:ab-aves}.  (Note that the centre $\Z(\M)$ does not depend
on what shift $\alpha_i^{-1} \alpha_j$ one is analysing.)  The other
claims are similar (using Lemma \ref{lem:condexp} to ensure that if
$a$ is non-negative with positive trace, then so is the conditional
expectation $E_{\Z(\M)} (a)$).

\begin{remark} The above arguments in fact show a more quantitative statement: if $a$ is non-negative with
$\|a\| \leq 1$ and $\tau(a) \geq \delta$ for some $0 \leq \delta
\leq 1$, then one has the same lower bound $c(k,\delta) \geq 0$ for
\eqref{reco2} as is given by Szemer\'edi's theorem for \eqref{limf}
for non-negative functions $f$ with $\|f\|_{L^\infty(X)} \leq 1$ and
$\int_X f\ d\mu \geq \delta$ (in particular, one could insert the
bound of Gowers~\cite{Gow01}). Similar remarks apply to multiple
commuting shifts.  We leave the details to the reader.\fin
\end{remark}

The proof of Theorem~\ref{thm:rel-w-m-centre}, given in
Section~\ref{sec:FurZim} below, rests on non-commutative versions of
several of the steps on the way to the Furstenberg-Zimmer Structure
Theorem in the commutative world of ergodic
theory~\cite{Fur77,Zim76.1,Zim76.2}. In particular, it rests on a
version of the dichotomy between relatively weakly mixing inclusions
and those containing a relatively isometric subinclusion, well-known
in ergodic theory from the work of Furstenberg~\cite{Fur77} and
Zimmer~\cite{Zim76.1,Zim76.2} and already generalized to the
non-commutative world by Popa in~\cite{Pop07}, for applications to
the study of superrigidity phenomena.

If $(\M,\tau,\a)$ is not asymptotically abelian then matters are
rather more complicated, with positive results only obtaining
under additional restrictions.  For $k=3$ and for ergodic shifts,
we have a positive result, established in Section \ref{triple}:

\begin{theorem}\label{thm:gen-triple-aves}  If $k=3$ and $(\M,\tau,\a)$ is an ergodic von Neumann dynamical system,
then one has weak convergence and convergence in norm, as well as
recurrence on a dense set.
\end{theorem}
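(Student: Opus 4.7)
The plan is to build a non-commutative Kronecker factor and analyse the triple averages there. Define $\M_K\subset\M$ as the weakly-closed linear span of the $\a$-eigenoperators $u\in\M$ satisfying $\a(u)=\l u$ for some $\l\in\bbT$. For any nonzero such $u$, both $u^*u$ and $uu^*$ are $\a$-invariant and hence scalar by ergodicity; this forces $|\l|=1$, allows us to normalise $u$ to be unitary, and implies (since $u_\l'(u_\l)^*$ is then $\a$-invariant whenever $u_\l,u_\l'$ share an eigenvalue) that eigenvalues have multiplicity one. Since the product of a $\l$-eigenoperator and a $\mu$-eigenoperator is a $\l\mu$-eigenoperator, $\M_K$ is a unital $*$-subalgebra, hence a von Neumann subalgebra with a trace-preserving conditional expectation $E_{\M_K}:\M\to\M_K$. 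Writing $G$ for the closure of $\{\a^n\}$ in $\mathrm{Aut}(\M_K,\tau)$ in the pointwise $L^2$-topology, $G$ is a compact abelian group in which $(\a^n)_n$ equidistributes with respect to Haar measure $m_G$ by unique ergodicity.

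For convergence in norm, apply the Hilbert-space van der Corput inequality to $x_n:=\a^n(a_1)\a^{2n}(a_2)$. Using trace cyclicity and the mean ergodic theorem (with ergodicity collapsing $E_{\mathrm{inv}}$ to $\tau(\cdot)\,1$) gives
\[
\lim_{N\to\infty}\frac{1}{N}\sum_{n=1}^N\langle x_n,x_{n+h}\rangle=\tau(\a^h(a_1^*)a_1)\,\tau(\a^{2h}(a_2^*)a_2).
\]
A spectral analysis of $\a$ (viewed as a unitary on $L^2(\tau)$) shows the $h$-average of this vanishes as $H\to\infty$ whenever $a_1$ or $a_2$ lies in $L^2(\M_K)^\perp$, yielding
\[
\Big\|\frac{1}{N}\sum_{n=1}^N\a^n(a_1)\a^{2n}(a_2)-\frac{1}{N}\sum_{n=1}^N\a^n(E_{\M_K}a_1)\a^{2n}(E_{\M_K}a_2)\Big\|_{L^2(\tau)}\to 0.
\]
For $a_i\in\M_K$, expanding in eigenunitaries $u_\l$ and using $\tfrac{1}{N}\sum_n\l^n\mu^{2n}\to\one_{\l\mu^2=1}$ supplies the missing $L^2$-limit, proving convergence in norm. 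Weak convergence is then immediate.

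For recurrence on a dense set, fix $a\ge 0$ with $\tau(a)>0$ and put $b:=E_{\M_K}(a)\in\M_K$, so $b\ge 0$ with $\tau(b^3)>0$. The continuous function $\psi(g):=\Re\tau(b\,g(b)\,g^2(b))$ on $G$ satisfies $\psi(\mathrm{id})=\tau(b^3)>0$, so some open neighbourhood $U$ of $\mathrm{id}$ has $\psi>\tau(b^3)/2$ on $U$. Pick $\phi\in C(G)$ with $\phi\ge 0$, $\mathrm{supp}\,\phi\subset U$ and $\int_G\phi\,dm_G>0$, and set $F(n):=\Re\tau(a\a^n(a)\a^{2n}(a))$. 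The key step is to study the \emph{weighted} Cesàro averages $\tfrac{1}{N}\sum_n\phi(\a^n)F(n)$: approximating $\phi$ uniformly by trigonometric polynomials in the characters of $G$, matters reduce to a twisted van der Corput estimate, in which the extra factor $\l^h$ contributed by a character merely shifts the spectral constraint $\theta_1+2\theta_2\equiv 0$ to $\theta_1+2\theta_2\equiv -\arg\l$ and still kills the continuous-spectrum contributions from $L^2(\M_K)^\perp$. This yields
\[
\frac{1}{N}\sum_{n=1}^N\phi(\a^n)\bigl(F(n)-\Re\tau(b\a^n(b)\a^{2n}(b))\bigr)\to 0,
\]
so by unique ergodicity the weighted mean converges to $\int_G\phi\,\psi\,dm_G\ge(\tau(b^3)/2)\int_G\phi\,dm_G>0$. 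Since $|F(n)|\le\|a\|^3$, a standard Markov-type splitting then extracts a set of positive lower density on which $F(n)>\tau(b^3)/4$. The main obstacle is the twisted van der Corput step together with the bookkeeping needed to transfer the uniform approximation of $\phi$ through the finitely many uniformly bounded trilinear trace terms; this weighted route avoids the more delicate alternative of a density-$1$ Kronecker reduction, which would require a second van der Corput estimate on the non-ergodic tensor product system $(\M\otimes\M^{\mathrm{op}},\a\otimes\a^{\mathrm{op}})$.
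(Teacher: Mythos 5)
Your proposal is essentially correct and follows the same route as the paper's proof: decompose $\M = \M_r \oplus \M_s$ into the Kronecker subalgebra and its weakly mixed complement (the paper's Proposition~5.2, via Jacobs--Glicksberg--de~Leeuw and St\o rmer), apply the Hilbert-space van~der~Corput inequality together with ergodicity to kill the $\M_s$ contributions to the triple averages, and for recurrence on a dense set use a character-twisted van~der~Corput estimate under a continuous Bohr cutoff to reduce the real part of the trace to the positive Kronecker quantity $\Re\tau(b\,\a^n(b)\,\a^{2n}(b))$. The only cosmetic difference is that the paper builds a finite-dimensional torus $\bbT^k$ from a finite eigenvector approximation of $b$ and invokes syndeticity of the resulting Bohr set, whereas you pass directly to the full compact Kronecker group $G$ and appeal to unique ergodicity and equidistribution of $(\a^n)$ in $G$ --- an equivalent packaging of the same argument.
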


We remark that the weak convergence result was previously established in \cite{fidaleo28}.


\subsection{Negative results}

Recurrence on average has been omitted from Theorem
\ref{thm:gen-triple-aves}.  This is because this result fails:

\begin{theorem}\label{ergfail}  Let $k=3$, then there exists an ergodic von Neumann dynamical system $(\M,\tau,\a)$
for which recurrence on average fails.  (In fact one can make the average \eqref{reco} strictly negative.)
\end{theorem}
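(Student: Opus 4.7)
The plan is to combine a finite-dimensional obstruction with a cocycle-type extension of an ergodic rotation, leveraging the non-commutativity of the matrix algebra to break trace-positivity of the three-term product.

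First, I would produce three non-negative matrices in $M_2(\bbC)$ whose triple product has negative real trace. Rank-one projections $P_j := v_j v_j^*$ for unit vectors $v_1,v_2,v_3 \in \bbC^2$ satisfy $\tr(P_1 P_2 P_3) = \langle v_1,v_2\rangle\langle v_2,v_3\rangle\langle v_3,v_1\rangle$, the Bargmann invariant of the triple; its argument (the geometric phase) can be freely chosen in $\bbT$ by moving the $v_j$'s, so placing it in $(\pi/2,3\pi/2)$ gives $\Re\tr(P_1 P_2 P_3) < 0$. This finite-dimensional configuration is the sole source of the eventual failure.

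Second, I would embed this configuration into an ergodic von Neumann dynamical system so that $(a,\alpha^n(a),\alpha^{2n}(a))$ reproduces the triple $(P_1,P_2,P_3)$ (up to positive scaling) on a positive-density set of $n$. The natural candidate is a cocycle skew product: let $(Y,\nu,T)$ be an irrational rotation on $\bbT$, let $G \le U(d)$ be a closed subgroup, let $\phi: Y \to G$ be a measurable cocycle, and set $\M := L^\infty(Y, M_d(\bbC))$ with trace $\tau(F) := \int_Y \tfrac{1}{d}\tr(F(y))\,d\nu(y)$ and shift $\alpha(F)(y) := \phi(T^{-1}y)\,F(T^{-1}y)\,\phi(T^{-1}y)^*$. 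For constant $a \in \M$ the average takes the explicit form
\[
\tau(a\,\alpha^n(a)\,\alpha^{2n}(a)) = \int_Y \tfrac{1}{d}\tr\bigl(a \cdot \phi^{(n)}(y)\,a\,\phi^{(n)}(y)^* \cdot \phi^{(2n)}(y)\,a\,\phi^{(2n)}(y)^*\bigr)\,d\nu(y),
\]
where $\phi^{(n)}(y) := \phi(T^{-1}y)\cdots\phi(T^{-n}y)$ is the iterated cocycle. The Cesàro average in $n$ converges by the weak convergence in Theorem~\ref{thm:gen-triple-aves}, so only the sign of its limit matters, and one tunes $\phi$ and $a$ so that the relevant time average is close to the negative triangle of Step 1.

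The main obstacle is the tension between ergodicity and negativity. Ergodicity of $\alpha$ on $\M$ reduces (via decomposing $M_d(\bbC)$ under the adjoint representation of $G$) to ergodicity of the classical skew product $T_\phi(y,g) := (Ty, \phi(y)g)$ on $Y \times G$ together with the requirement that $G$ act irreducibly on $\bbC^d$; on the other hand, any irreducible compact action collapses the full Haar average $\int_{G \times G} \tr(a g_1 a g_1^* g_2 a g_2^*)\,dg_1\,dg_2$ to the scalar $\tr(a)^3/d^2 > 0$ by Schur's lemma, destroying the matrix-level negativity. The resolution exploits the fact that irrational rotations on compact group extensions are ergodic but not mixing, so the joint distribution of $(\phi^{(n)}(y), \phi^{(2n)}(y))$ converges not to Haar$\otimes$Haar but to a lower-dimensional distribution constrained by the cocycle identity $\phi^{(2n)}(y) = \phi^{(n)}(T^{-n}y)\,\phi^{(n)}(y)$; a careful choice of cocycle then localizes the limit integral on $(g_1,g_2)$-pairs that conjugate $a$ to the triangle of Step 1, producing the required negative value. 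An alternative realization would use an outer automorphism of a free group factor $L(F_n)$ tailored so the orbit of a non-negative element traces out the same triangle; either way, the failure of recurrence on average is traceable directly back to the finite-dimensional Bargmann-phase obstruction.
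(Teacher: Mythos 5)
Your proposal correctly isolates the finite-dimensional source of the phenomenon (three non-negative matrices whose cyclic triple product has negative real trace — equivalently a Bargmann phase in $(\pi/2,3\pi/2)$), and it correctly identifies the central difficulty: any construction that is ergodic enough for Theorem~\ref{thm:gen-triple-aves} to give convergence tends to average the cocycle over too much of the group, and by a Schur-lemma computation the Haar-averaged triple trace is forced to be $\tr(a)^3/d^2 > 0$. But having named this obstruction you do not actually remove it; the resolution you propose --- that the distribution of $(\phi^{(n)},\phi^{(2n)})$ is constrained by the cocycle identity, so "a careful choice of cocycle then localizes the limit integral" near the negative triangle --- is an assertion of the conclusion, not a mechanism. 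For a generic ergodic cocycle over an irrational rotation the pair $(\phi^{(n)}(y),\phi^{(2n)}(y))$ still equidistributes along a large enough subvariety that the sign of the limit is not under control by this reasoning alone, and you give no construction of a specific $\phi$, no verification that the skew product $T_\phi$ is ergodic while simultaneously keeping the support of the limiting distribution small, and no estimate showing the positive Haar-type contribution is beaten by the negative triangle.

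The missing idea is precisely what the paper supplies: a Behrend-type combinatorial input. In Lemma~\ref{beh} one builds a set $F\subset\bbZ/d\bbZ$ with essentially no nontrivial three-term arithmetic progressions but $\geq d^{3-\eps}$ "hexagons" $\{x,x+h,x+k,x+k+2h,x+2k+h,x+2k+2h\}$. The proof of Theorem~\ref{negav2} then works inside the (finite-order) noncommutative torus with a generic two-parameter automorphism: ergodicity comes for free from genericity of $(\theta_1,\theta_2)$, and after Poisson summation the Cesàro limit of $\tau(a\,\a^n(a)\,\a^{2n}(a))$ reduces to a signed sum over hexagons in $F$, whose expectation is controlled by the (scarce) 3-APs and whose variance is controlled by the (abundant) hexagons. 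A random choice of signs then produces negativity. This balance between scarcity of 3-APs and abundance of hexagons is the quantitative engine that your sketch lacks; without an analogue of it, the cocycle skew-product plan does not close. (For comparison, your plan is closer in spirit to the paper's Corollary~\ref{negns} and Theorem~\ref{negtrace}, which handle the \emph{non-ergodic} case with a straightforward finite matrix algebra or crossed product; it is exactly the passage to ergodicity that requires the Behrend machinery.)
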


We establish this in Section \ref{3fail-sec}.  The main tool is a
sophisticated version of the Behrend set construction, combined
with the crossed product construction.

When one drops the ergodicity assumption\footnote{In the commutative case, an easy application of the ergodic decomposition allows one to recover the non-ergodic case of the recurrence and convergence results from the ergodic case.  Unfortunately, in the non-commutative case, the ergodic decomposition is only available when the invariant factor $\M^\tau$ is central, which is the case in the asymptotically abelian case, but not in general.}, one also loses recurrence on a dense set:

\begin{theorem}\label{3fail}  Let $k=3$, then there exists a von Neumann dynamical system $(\M,\tau,\a)$ for
which recurrence on a dense set fails.  (In fact one can make the
means \eqref{reco2} equal to a negative constant for all non-zero
$n$.)
\end{theorem}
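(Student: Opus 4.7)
The goal is to exhibit an explicit, non-ergodic von Neumann dynamical system $(\M,\tau,\a)$ together with a non-negative $a \in \M$ with $\tau(a)>0$ such that $\Re\tau(a\a^n(a)\a^{2n}(a)) = -c$ for some fixed $c>0$ and every $n \neq 0$.

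Two preliminary observations guide the construction. First, the function $F(n) := \tau(a\a^n(a)\a^{2n}(a))$ is a positive-definite-like correlation whose ``Fourier transform'' on the spectral side of $\a$ encodes its behaviour in $n$. For $F$ to equal the constant $-c$ on $\bbZ\setminus\{0\}$ while being non-negative at $n=0$, the decomposition of $F$ into spectral data must include an absolutely continuous (Lebesgue-type) piece; in particular $\M$ must be infinite-dimensional and $\a$ must have infinite order. Second, in contrast with the abelian case where $\int fgh\,d\mu \geq 0$ is automatic for non-negative $f,g,h$, the trace $\tau(XYZ)$ of three non-commuting non-negative operators in a non-abelian $\M$ can be strictly negative. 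The prototype is three rank-one projections $p_j = |v_j\rangle\langle v_j|$ in $M_2(\bbC)$ onto real unit vectors $v_j \in \bbR^2$ at pairwise angle $2\pi/3$, for which
\[
\tau(p_0 p_1 p_2) = \tfrac{1}{2}\langle v_0,v_1\rangle\langle v_1,v_2\rangle\langle v_2,v_0\rangle = \tfrac{1}{2}\cos^3(2\pi/3) = -\tfrac{1}{16}.
\]
This ``frustrated'' triple is the mechanism producing the negative value.

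\textbf{Construction.} I will take $\M = M_d(\bbC) \otimes \N$, where $\M_0 := M_d(\bbC)$ (with small $d$, presumably $2$ or $3$) houses the non-commutative ``frustration'', and $\N$ is an infinite-dimensional auxiliary (abelian) algebra producing both the infinite order of $\a$ and the non-ergodicity. The shift is $\a = \mathrm{Ad}(V(\cdot)) \otimes \sigma$, where $\sigma$ is a suitable shift on $\N$ and $V(\cdot)$ is a $U(d)$-valued cocycle; the element $a$ will be chosen of the form $a = M(\cdot)$, a positive matrix-valued function on $\N$, whose $\a$-orbit projects (fibrewise) into the frustrated configuration above. One expands
\[
\tau(a\a^n(a)\a^{2n}(a)) = \int_\Omega \tfrac{1}{d}\,\tr_d\bigl(M(\omega)\, \mathrm{Ad}(V_n(\omega))(M(\sigma^{-n}\omega))\, \mathrm{Ad}(V_{2n}(\omega))(M(\sigma^{-2n}\omega))\bigr)\,d\mu(\omega),
\]
where $V_n$ is the standard cocycle built from $V$. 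We engineer $V$ and $M$ so that: for each $\omega$, the fibrewise $M_d$-triple $\{M(\omega), \mathrm{Ad}(V_n(\omega))M(\sigma^{-n}\omega), \mathrm{Ad}(V_{2n}(\omega))M(\sigma^{-2n}\omega)\}$ is a non-commutative analogue of the frustrated triple; and, crucially, the integral over $\omega$ washes out the residual $n$-dependence so that the total takes the same (negative) value for every $n\neq 0$.

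\textbf{Main obstacle.} The key tension is between (i) the ``spreading'' role of the auxiliary factor $\N$ (which is needed to convert the manifestly $n$-dependent fibrewise contribution into a truly $n$-independent integral, something impossible in a purely finite-dimensional setting by the trigonometric-polynomial argument above) and (ii) preserving the non-commutative frustration responsible for the negativity (which tends to be averaged away when one integrates too aggressively). Balancing these two competing requirements is what forces the construction to have a specific structure; essentially one chooses the cocycle $V$ so that for every $n \neq 0$ the distribution of the pair $(V_n(\omega), V_{2n}(\omega))$ on $U(d)\times U(d)$ (with $\omega\sim\mu$) is the same, independent of $n$, and equal to a measure concentrated on the frustrated configuration. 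Once this rigidity is achieved, the remaining verification is a direct, if intricate, calculation of the matrix-valued correlations, and positivity of $a$ is ensured by taking $M(\omega)$ to be (for instance) a fibrewise rank-one projection plus a small multiple of the identity.
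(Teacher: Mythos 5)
Your high-level diagnosis is right on both counts: the negativity must come from a non-commutative ``frustrated'' triple (your $\tau(p_0p_1p_2)=-\tfrac1{16}$ example captures the same cube-root-of-unity phenomenon that the paper exploits), and some infinite-dimensional auxiliary factor is needed to convert a single negative fibrewise contribution into a value that is literally constant over all $n\neq 0$. However, the proposal stops precisely where the proof has to begin, and I do not think the construction as sketched can be completed without substantially more ideas.

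Two concrete gaps. First, you never actually construct the cocycle $V$; you merely state that one should choose it so that the law of $(V_n(\omega),V_{2n}(\omega))$ is $n$-independent and concentrated on frustrated configurations. This is not a minor verification. The paper achieves $n$-independence by a structural trick quite different from a tensor-product cocycle: it forms the crossed product $\M'\rtimes_U\bbZ^2$ of an abelian algebra $\M'$ by two commuting unitaries $m,u$ with $mam^{-1}=\beta(a)$, $u$ central, and defines a shift $\a$ with $\a|_{\M'}=\mathrm{id}$, $\a(u)=u$, $\a(m)=mu$. Tracing $a\,\a^n(a)\,\a^{2n}(a)$ then forces the net power of the central $u$ to vanish, which in turn forces the $m$-exponents $(h_1,h_2,h_3)$ to be $(h,-2h,h)$ --- an arithmetic-progression constraint --- and makes the resulting sum manifestly independent of $n\neq 0$. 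This mechanism does not appear in a generic $M_d(\bbC)\otimes\N$ with a cocycle shift.

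Second, and more subtly, even after $n$-independence is achieved, negativity is not automatic: the sum that survives is an average over all length-three arithmetic progressions, of which a positive fraction contribute with the ``wrong'' (positive) sign. The paper's Lemma~\ref{beh} and Lemma~\ref{negav} handle this via a Behrend/Salem--Spencer set combined with $\omega=e^{2\pi i/3}$: the matrix $A(j,k)=1_E(j)1_E(k)+1_E(j)\omega^{-j}1_E(k)\omega^k$ is positive semi-definite (sum of two rank-one projections --- your observation), and the triple sum $\sum_{n,r}A(n,n+r)A(n+r,n+2r)A(n+2r,n)$ has summand $+8$ when $3\mid r$ and $-1$ when $3\nmid r$; one then needs a set $E$ in which APs with spacing divisible by $3$ are overwhelmed by a factor of more than $8$, and that is exactly what the Behrend construction supplies. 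Your ``fibrewise concentration on the frustrated configuration'' amounts to demanding something much stronger than what is actually achievable; the paper gets by with a carefully weighted average that is merely \emph{net} negative. Without something playing the role of these two constructions, the proposal does not compile into a proof.
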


We establish this in Section \ref{3fail-sec} also.  This result is
simpler to prove than Theorem \ref{ergfail}, and uses the original
Behrend set construction, and crossed product constructions.

One also loses recurrence on a dense set for larger $k$ even when ergodicity is assumed:

\begin{theorem}\label{5fail}  Let $k \geq 5$ be odd, then there exists an ergodic von Neumann dynamical system
$(\M,\tau,\a)$ for which recurrence on a dense set fails.  (In
fact one can make the means \eqref{reco2} equal to a negative
constant for all non-zero $n$.)
\end{theorem}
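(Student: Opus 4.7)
The construction is an adaptation of the crossed-product--Behrend strategy used for Theorems~\ref{ergfail} and~\ref{3fail}, tuned so that the ambient system is ergodic while the $k$-fold trace is a strictly negative constant for every $n\neq 0$. The starting observation is that when $k$ is odd the arithmetic progression $0,n,\ldots,(k-1)n$ has an integer centre $(k-1)n/2$, so after translation by $\alpha^{-(k-1)n/2}$ the product $a\,\alpha^n(a)\cdots\alpha^{(k-1)n}(a)$ becomes palindromic,
\[
a\,\alpha^n(a)\cdots\alpha^{(k-1)n}(a)\;\longleftrightarrow\;\alpha^{-\frac{k-1}{2}n}(a)\cdots\alpha^{-n}(a)\,a\,\alpha^n(a)\cdots\alpha^{\frac{k-1}{2}n}(a),
\]
which allows the $k$-fold trace to be manipulated as an iterated version of the $k=3$ building block behind Theorem~\ref{3fail}.

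The plan is then as follows. First, I would take as starting point the non-ergodic $k=3$ example from Theorem~\ref{3fail}, which is a crossed product encoding a Behrend-type set $B\subset \bbZ$ (with no non-trivial $3$-APs) and producing $\Re\tau(a\alpha^n(a)\alpha^{2n}(a))=-c<0$ for all $n\neq 0$. Second, for odd $k\geq 5$ I would amplify this by a tensor-type construction built from $(k-1)/2$ copies of the building block positioned at spacings $n,2n,\ldots,\tfrac{k-1}{2}n$ around a common centre; the Behrend property of $B$ is to be used to guarantee that the ``off-diagonal'' interactions between copies vanish for each non-zero $n$, while the ``on-diagonal'' contribution is the product of $(k-1)/2$ negative numbers times an explicit positive normalisation, with the oddness of $k$ fixing the remaining combinatorial sign to be negative. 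Third, to force ergodicity I would tensor with an irrational rotation $(L^\infty(\bbT),R_\theta)$ with $\theta$ chosen generically, replacing $(\M,\alpha)$ by $(L^\infty(\bbT)\otimes\M,R_\theta\otimes\alpha)$; because the centre of the underlying $\M$ is identified with a sub-system on which $R_\theta$ acts by a non-trivial rotation, the only joint invariants of the coupled shift are scalars. The witness element $1\otimes a$ remains non-negative with unchanged trace behaviour, since the trace tensorises and the rotation factor evaluates to $1$ on constants.

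The main obstacle will be controlling the off-diagonal interactions in the amplified system: one must verify that, for \emph{every} non-zero $n$, all cross-terms in the expansion of $\Re\tau(a\alpha^n(a)\cdots\alpha^{(k-1)n}(a))$ are annihilated by the AP-avoidance property of $B$, so that only the diagonal (negative constant) contribution survives. This is where the quantitative strength of the Behrend construction enters, and also where oddness of $k$ is essential: for even $k$ the AP $0,n,\ldots,(k-1)n$ has no integer centre, the palindromic reduction above fails, and the cross-terms cannot all be killed simultaneously by a single combinatorial set. A secondary technical point is verifying that tensoring with $R_\theta$ really does eliminate the residual centre of $\M$; this reduces to a direct spectral computation showing that the spectrum of the $R_\theta$-action is disjoint from the (discrete) spectrum of the crossed-product shift restricted to $\Z(\M)$, for generic $\theta$.
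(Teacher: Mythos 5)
Your proposal takes a genuinely different route from the paper's, and unfortunately several of its key steps do not work.

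The paper does \emph{not} build on the $k=3$ Behrend construction. Instead, the proof of Theorem~\ref{5fail} (via Theorem~\ref{negavk}) starts from the Bergelson--Host--Kra--Ruzsa counterexample (Theorem~\ref{bhkthm}): a genuinely measure-theoretic system $(X,\mu,S)$ and set $A$ for which the five-fold intersection $\mu(A\cap S^nA\cap\cdots\cap S^{4n}A)$ is anomalously small (at most $\mu(A)^{100}$) while $\mu(A\cap S^nA)=\mu(A)^2$ exactly. This is combined with a group-theoretic input (Proposition~\ref{apthy2}, a ``square group'' with an automorphism $T$ and elements $e_0,\dots,e_4$ satisfying prescribed AP$_5$ relations and no others) by forming the crossed product $\M = L^\infty(X^G,\mu^G)\rtimes_U G$ of a $G$-Bernoulli system with $G$. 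Ergodicity of the resulting shift $\a$ is a consequence of the Bernoulli structure and the absence of nontrivial fixed points of $T^k$, not of any tensoring trick. The witness is $a=\sum_{i}1_A(2-e_i-e_i^{-1})1_A$, and the oddness of $k$ enters only at the very end: the expansion of $\tau(a\,\a^n(a)\cdots\a^{(k-1)n}(a))$ forces the $g_i$'s in each surviving term to be a permutation of $\{e_0,\dots,e_{k-1}\}$ or of its inverses, and such terms carry sign $(-1)^k=-1$; with $\mu(A)$ small the dominant such term ($-\mu(A)^{10}$) overwhelms the positive $10^k\mu(A)^{100}$.

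There are three concrete gaps in your plan. First, the sign count in the proposed amplification is wrong: $(k-1)/2$ tensor copies of a system with constant negative trace $-c$ would contribute $(-c)^{(k-1)/2}$, which for $k=5$ is $c^2>0$, not negative, so already the smallest case fails. Second, tensoring with an irrational rotation $R_\theta$ cannot force ergodicity: if $b\in\M^\a$ is a nontrivial invariant, then $1\otimes b$ is invariant under $R_\theta\otimes\a$ (both factors fix eigenvalue $1$), so the product system inherits the non-ergodicity; no choice of $\theta$ helps. Third, the ``palindromic'' reduction and Behrend-based cross-term cancellation do not reflect why oddness is really needed; the paper's obstruction at $k=4$ is the failure of the Bergelson--Host--Kra--Ruzsa example at $k=4$, which your plan does not address at all. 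In short, the ingredients required here (the BHK counterexample for the quantitative smallness, a bespoke group with controlled AP relations, and a Bernoulli crossed product for ergodicity) are essential and quite different from what you sketch.
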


We establish this in Section \ref{5fail-sec}.  This result uses a
counterexample of Bergelson, Host, Kra, and Ruzsa \cite{bhk},
combined with a group theoretic construction.  The restriction to
odd $k$ is mostly technical and can almost certainly be removed;
however, we are unable to decide whether Theorem \ref{5fail} can
be extended to the $k=4$ case, because it was shown in \cite{bhk}
that the $k=5$ counterexample in that paper cannot be replicated
for $k=4$.

For convergence, we have counterexamples for $k \geq 4$ even when assuming ergodicity:

\begin{theorem}\label{4fail}  Let $k \geq 4$, then there exists an ergodic von Neumann dynamical system
$(\M,\tau,\a)$ for which weak convergence and convergence in norm
fail.
\end{theorem}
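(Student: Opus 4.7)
\medskip\noindent\textbf{Proof plan.} It suffices to disprove weak convergence: if the scalar averages $\frac{1}{N}\sum_{n=1}^N \tau(a_0\,\a^n(a_1)\cdots\a^{(k-1)n}(a_{k-1}))$ fail to converge for some choice of $a_0,\ldots,a_{k-1}\in \M$, then the operator-valued averages cannot converge in $L^2(\tau)$ either, since pairing with $a_0$ in the tracial inner product would otherwise force the scalar averages to converge. My target is therefore a single ergodic system in which some such family of scalar averages oscillates.

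The plan is to take $\M = L(\G)$, the group von Neumann algebra of a carefully chosen countable discrete group $\G$, equipped with its canonical trace $\tau$, and to let $\a$ be the trace-preserving $*$-automorphism induced by a group automorphism $\phi\colon \G \to \G$. For test elements $a_i = \delta_{g_i}$ indexed by group elements $g_i \in \G$, the scalar average becomes
\[ \frac{1}{N}\sum_{n=1}^N \one\bigl[\,g_0 \cdot \phi^n(g_1) \cdot \phi^{2n}(g_2) \cdots \phi^{(k-1)n}(g_{k-1}) = e_\G\,\bigr], \]
which is precisely the density in $[1,N]$ of the ``word return time'' set $R := \{n : w_n = e_\G\}$, where $w_n := g_0\cdot\phi^n(g_1)\cdots\phi^{(k-1)n}(g_{k-1})$. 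If $R$ is arranged to have oscillating density, weak convergence fails.

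The construction of $\G$ will proceed by taking a non-abelian (nilpotent, or more generally solvable) group together with a hyperbolic automorphism $\phi$ lifted from an $SL$-action on the abelianization. Expanding $w_n$ using the commutator identities in $\G$, the condition $w_n = e_\G$ will be reduced to an arithmetic condition of the form $P(n) \in S$, where $P$ is a polynomial in $n$ of degree at most $k-1$ with coefficients determined by the $g_i$ (arising from nested commutators via Baker--Campbell--Hausdorff-type expansions), and $S \subset \bbZ$ is a set built in the spirit of a Behrend set so that the density of $\{n : P(n) \in S\}\cap[1,N]$ oscillates as $N \to \infty$. Ergodicity of $\a$ is inherited from hyperbolicity: any $\phi$-invariant element of $L(\G)$ projects to a $\phi$-invariant element of the abelian algebra $L(\G^{ab})$, and hyperbolicity of the induced action there leaves only the scalars invariant. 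For $k \geq 5$ one either sets the extra $a_j$ equal to $1$ (so that the additional shifts contribute trivially) or enlarges $\G$ to a deeper nilpotent group, so that $P$ achieves the appropriate degree.

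The principal obstacle is the algebraic engineering step: selecting $\G$, $\phi$, and the tuple $(g_0,\ldots,g_{k-1})$ so that the word identity $w_n = e_\G$ genuinely translates into a Behrend-style arithmetic obstruction rather than a trivial or trivially-dense condition, all while keeping $\a$ ergodic. This is a non-commutative analogue of standard skew-product counterexamples in ergodic Ramsey theory, exploiting the extra freedom afforded by non-abelian commutators to create obstructions at $k=4$ that are ruled out in the abelian setting by the Conze--Lesigne / Host--Kra convergence theorems.
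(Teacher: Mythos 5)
Your framework is exactly the one the paper uses: take $\M = L(G)$ for a discrete group $G$ with a group automorphism inducing the shift, take the $a_i$ to be group elements, so that the scalar average becomes the density in $[1,N]$ of the return set $\{n : g_0\,\phi^n(g_1)\cdots\phi^{(k-1)n}(g_{k-1}) = e\}$, and get ergodicity from the automorphism having no nontrivial periodic points. The reduction from norm convergence to weak convergence is also correct and used in the paper. So the proposal is right up to the point where everything actually has to happen, namely the construction of $(G,\phi,g_0,\ldots,g_{k-1})$.

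That construction is where the proposal has a genuine gap, and I don't think the proposed route (nilpotent or solvable $G$, hyperbolic $\phi$ lifted from an $SL$-action, BCH expansion reducing the word equation to a polynomial condition $P(n)\in S$ with $S$ Behrend-like) can be made to work. If $\phi$ is hyperbolic on the abelianization, then $\phi^{jn}$ grows exponentially in $n$, so the word equation in the abelianization reads $\sum_j M^{jn}v_j = 0$ with $M$ hyperbolic; generically this forces all $v_j = 0$, so the $g_j$ must be central, at which point the remaining (central) equation is again exponential rather than polynomial and again only has finitely many solutions unless it is trivially satisfied. There is no obvious regime in which a nilpotent group with hyperbolic automorphism yields a return set of genuinely oscillating density. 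More importantly, any proposed construction of this kind must explain why it cannot be run at $k=3$: by Theorem~\ref{thm:gen-triple-aves}, ergodic systems do have weak convergence for triple averages, so a valid $k\geq 4$ construction must encounter an algebraic obstruction at $k=3$. Your proposal does not identify one. The paper's construction (Theorem~\ref{k4} and Proposition~\ref{apthy}) is of a fundamentally different flavor: $G$ is a ``square group,'' built on generators $e_{i,n}$ subject only to the length-$4$ relations $e_{0,n}e_{1,n+r}e_{2,n+2r}e_{3,n+3r} = \id$ for a prescribed $r$-set $A$, with the shift acting on the index $n$. A combinatorial normal-form argument (Corollary~\ref{maxred}) shows those relations are mutually independent, so the return set can be made to equal $A$ exactly for an arbitrary $A$. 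The crucial algebraic observation --- Remark~\ref{3slop} --- is that the corresponding AP3 relations $e_0(T^re_1)(T^{2r}e_2)=\id$ are \emph{not} independent (they propagate along arithmetic progressions in $r$), which is exactly why the construction is forced to $k\geq 4$ and is consistent with the positive triple-average theorem. That independence-of-relations phenomenon is the real content of the counterexample, and it is not present in your proposal.
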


We establish this in Section \ref{4fail-sec}.  The main tool is a group theoretic construction.

The above counterexamples were for the single shift case, but of
course they are also counterexamples to the more general situation
of multiple commuting shifts.  We summarise the positive and
negative results (in the single shift case) in Table
\ref{convtable}.

\begin{table}[ht]
\caption{Positive and negative results for non-commutative
convergence and recurrence of a single shift for various values of
$k$, and for various assumptions of ergodicity.  The entries
marked ``No?'' would be expected to have a negative answer if one
adopts the principle that recurrence results which fail for one
value of $k$, should also fail for higher values of
$k$.}\label{convtable}
\begin{tabular}{|l|l|l|l|l|}
\hline
 & Conv. norm? & Conv. mean? & Recur. avg.? & Recur. dense? \\
\hline
$k=2$       & Yes & Yes & Yes & Yes \\
\hline
$k=3$, erg. & Yes & Yes & No & Yes \\
$k=3$, non-erg. & ??? & ??? & No & No \\
\hline
$k\geq 4$, even, erg. & No & No & No? & ??? \\
$k\geq 4$, even, non-erg. & No & No & No? & No? \\
\hline
$k\geq 5$, odd, erg. & No & No & No & No \\
$k\geq 5$, odd, non-erg. & No & No & No & No \\
\hline
\end{tabular}
\end{table}

We note in particular that the following questions remain open:

\begin{problem}\label{silly}  If $k=3$, does weak or norm convergence hold for non-ergodic von Neumann
dynamical systems $(\M,\tau,\a)$?
\end{problem}

\begin{problem}\label{sillier}  If $k=3$, does weak or norm convergence hold for von Neumann
$\bbZ^3$-systems $(\M,\tau,\a_0,\a_1,\a_2)$ (possibly after
imposing suitable ergodicity hypotheses)?
\end{problem}

\begin{problem}  If $k=4$ (or if $k \geq 6$ is even), does recurrence on a dense set hold for ergodic
von Neumann dynamical systems $(\M,\tau,\a)$?
\end{problem}

We present some remarks on the first two problems in Section \ref{clos-sec}.

\textbf{Notational remark.}\quad Unfortunately this paper stands
between two quite unrelated uses of the word `factor', one from
operator algebras and one from ergodic theory.  In the hope that it
may be of interest to operator algebraists, we have deferred to
their usage (even though the true notion of a factor due to Murray
and von Neumann is actually not essential to our work), and will
refer throughout to inclusions of von Neumann algebras, even in the
commutative setting where these can be identified with
ergodic-theoretic `factors'. \fin

\subsubsection*{Acknowledgements} Our thanks go to Sorin Popa for
several helpful discussions, Francesco Fidaleo and David Kerr for references, and to Ezra Getzler for explaining Grothendieck's interpretation of a group via its sheaf of flat connections.  The authors are indebted to the anonymous referee for careful comments and suggestions.
\address{Brown University \and Universit\"at  T\"ubingen \and University of California, Los Angeles.

\section{Counterexamples}\label{countersec}

In this section we construct various counterexamples of von
Neumann systems $(\M, \tau, \a)$ which will demonstrate the
negative results in Theorems \ref{ergfail}-\ref{4fail}.  The
material in this section is independent of the positive results in
the rest of the paper, but may provide some cautionary intuition
to keep in mind when reading the proofs of those results.

\subsection{Non-convergence for $k \geq 4$}\label{4fail-sec}

We first show that convergence results fail for $k \geq 4$, even
if one assumes ergodicity. In fact the divergence is so bad that
it is essentially arbitrary:

\begin{theorem}[No convergence for $k \geq 4$]\label{k4}  Let $k \geq 4$ be an integer, and let $A \subset \bbZ$
be a set.  Then there exist an ergodic von Neumann system
$(\M,\tau,\a)$ and elements $a_0,\ldots,a_{k-1} \in \M$ such that
$$ \tau( a_0 \a^n (a_1) \ldots \a^{(k-1)n}( a_{k-1} ) ) = 1_A(n)$$ for all integers $n$.
\end{theorem}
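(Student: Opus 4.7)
My approach is via group von Neumann algebras. Given a discrete countable group $\Gamma$ with a trace-preserving automorphism $\phi$, set $\M := L(\Gamma)$ with the canonical faithful trace $\tau(u_\gamma) = [\gamma = e_\Gamma]$ and the automorphism $\a$ induced by $\phi$. Ergodicity of $\a$ on $(\M,\tau)$ is equivalent to $\phi$ having no finite orbit other than $\{e_\Gamma\}$. For elements $g_0, \ldots, g_{k-1} \in \Gamma$, taking $a_i := u_{g_i}$ yields
\begin{equation*}
  \tau\bigl(a_0 \a^n(a_1) \a^{2n}(a_2) \cdots \a^{(k-1)n}(a_{k-1})\bigr) = \bigl[\,g_0 \phi^n(g_1) \phi^{2n}(g_2) \cdots \phi^{(k-1)n}(g_{k-1}) = e_\Gamma\,\bigr],
\end{equation*}
so for each $A \subset \bbZ$ it suffices to produce an ergodic pair $(\Gamma, \phi)$ and elements $g_i \in \Gamma$ such that the word on the right equals $e_\Gamma$ precisely when $n \in A$.

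The plan is to define $\Gamma$ by the following presentation. Start with the free group $F$ on the countable alphabet $\{x_n^{(i)} : n \in \bbZ,\, 0 \leq i \leq k-1\}$, equipped with the shift automorphism $\sigma : x_n^{(i)} \mapsto x_{n+1}^{(i)}$. For each $n$ set $w_n := x_0^{(0)} x_n^{(1)} x_{2n}^{(2)} \cdots x_{(k-1)n}^{(k-1)}$, and let $N \trianglelefteq F$ be the smallest $\sigma$-invariant normal subgroup containing $\{w_n : n \in A\}$; equivalently, the normal closure of $\{\sigma^m(w_n) : m \in \bbZ,\, n \in A\}$. Put $\Gamma := F/N$; by $\sigma$-invariance of $N$, the shift $\sigma$ descends to an automorphism $\phi$ of $\Gamma$, and taking $g_i := x_0^{(i)} N$ gives exactly $g_0 \phi^n(g_1) \cdots \phi^{(k-1)n}(g_{k-1}) \equiv w_n \pmod N$ for all $n \in \bbZ$. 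The target identity then reduces to showing $w_n \in N$ if and only if $n \in A$, together with ergodicity of $\phi$.

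The main obstacle is verifying that $w_n \notin N$ for $n \notin A$ --- i.e., that the imposed relations do not collapse beyond what is intended. This is a combinatorial group theory question about the presentation. The key structural feature is that each relator $\sigma^m(w_n)$ is a product of exactly $k$ letters, one from each ``type'' $\{x_*^{(i)}\}_{0 \leq i \leq k-1}$, placed at the arithmetic progression $(m, m+n, \ldots, m+(k-1)n)$; two distinct relators share at most one letter in common, since two consecutive matching letters would already force $n_1 = n_2$ and then $m_1 = m_2$. For $k \geq 4$ this tight overlap structure should support a small-cancellation / van Kampen diagram argument --- or an explicit construction of a quotient homomorphism $\Gamma \to \Gamma'$ onto a simpler group separating $w_n$ from $e_\Gamma$ for each $n \notin A$ --- guaranteeing the desired non-collapsing. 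Ergodicity reduces to the parallel assertion that no nontrivial element of $\Gamma$ is $\phi$-periodic; since each non-trivial reduced word in $F$ has a finite support of indices that $\sigma$ translates non-trivially, this should follow from the same combinatorial analysis, with care to check that the relators (which themselves have well-defined support $[m, m+(k-1)n]$) do not destroy this notion of support modulo $N$. The verification of these two combinatorial statements is the technical heart of the proof; note that the analogous construction fails for $k = 2$ or $k = 3$ because two- and three-letter relators overlap too heavily, in line with the positive convergence results the paper establishes at those values of $k$.
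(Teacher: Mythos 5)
Your reduction to group von Neumann algebras, and the construction of $\Gamma$ as the quotient of a free group on generators $x_n^{(i)}$ by the $\sigma$-invariant normal closure of the relators $w_n$, $n\in A$, is precisely what the paper does (Appendix B, Proposition \ref{apthy}, where the generators are written $e_{i,n}$). Your observation that two distinct relators share at most one letter (``two points determine a line'') is also the combinatorial heart of the paper's argument. The characterisation of ergodicity of the induced shift on $L\Gamma$ by the absence of nontrivial finite $\phi$-orbits is likewise correct.

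However, the step you flag as ``the technical heart of the proof'' is genuinely the hard part, and you have not supplied it. You suggest this ``should support a small-cancellation / van Kampen diagram argument,'' but the classical metric small-cancellation hypotheses do not apply off the shelf here: with relators of length $k=4$ and pieces of length exactly $1$ (not less), the presentation is not $C'(1/6)$, nor even $C'(1/4)$. One would instead have to verify a non-metric condition such as $C(4)$--$T(4)$, which already requires the kind of careful combinatorial analysis that is being elided. The paper does not invoke small-cancellation theory at all: it develops a bespoke machinery of ``square groups'' and ``flat connections on monotone regions'' (Definitions in Appendix B), proves a normal-form theorem (Corollary \ref{maxred}, establishing a bijection between group elements and maximal reduced flat connections), and uses this both to show that $w_n \notin N$ for $n \notin A$ and to verify that no power $T^k$ ($k \neq 0$) of the shift fixes a nontrivial element. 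This occupies several pages and constitutes the real content of the theorem. Your proposal correctly identifies what must be proved and why the construction should work, but it does not prove it, so it remains a plan rather than a proof.
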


It is clear that this implies Theorem \ref{4fail} by choosing $A$
appropriately (and noting that failure of weak convergence implies
failure of convergence in norm, by Cauchy-Schwarz applied in the
contrapositive).

\begin{proof}  It will suffice to verify the $k=4$ case, as the higher cases follow by setting
$a_j = 1$ for $j \geq 4$.  We will need a group $G$ with four
distinguished elements $e_0,e_1,e_2,e_3$ and an automorphism $T: G
\to G$ such that $T^k$ has no fixed points other than the identity
for all $k \neq 0$, and such that
$$
e_0 (T^{r} e_1) (T^{2r} e_2) (T^{3r} e_3) = \id
$$
holds for all $r \in A$ and fails for all $r \in \bbZ \backslash A$.
The construction of such a group is somewhat non-trivial and is
deferred to Appendix \ref{groupthy}, and in particular to
Proposition \ref{apthy}.

The group algebra $\bbC G$ of formal finite linear combinations of
group elements of $G$, acts (on the left) on the Hilbert space
$\ell^2(G)$ in the obvious manner (arising from convolution on $G$),
and can thus be viewed as a subspace of the von Neumann algebra $B(
\ell^2(G) )$ (note that all the elements of $G$ become unitary in
this perspective).  We can place a finite faithful trace $\tau$ on
$\bbC G$ by declaring the identity element to have trace $1$, and
all other elements of $G$ to have trace zero. If we then define $\M$
to be the closure of $\bbC G$ in the weak operator topology of $B(
\ell^2(G) )$, we obtain a finite von Neumann algebra, known as the
\emph{group von Neumann algebra} $LG$ of $G$. The shift $T$ leads to
an algebra isomorphism $\a$ of $\bbC G$, which then easily extends
to a shift $\a$ on $\M = LG$. Because none of the powers of $T$ have
any non-trivial fixed points, the orbit of any non-zero group
element contains no repetitions, and so one can easily establish
that $\a^n f$ converges weakly to $\tau(f)$ as $n \to \infty$ for
every $f \in \bbC G$, and hence by approximation that the unitary
operator on $\ell^2(G)$ associated to $\a$ has no fixed points
outside $\bbC\delta_\id$. This implies that $(\M,\tau,\alpha)$ is
ergodic, since given $a \in \M$ for which $\a(a) = a$ and $\tau(a) =
0$ it follows that $a(\delta_\id) \in \ell^2(G)$ is a fixed point for
the action of $T$ on $\ell^2(G)$, which must therefore equal
$\tau(a)\delta_\id = 0$, and hence $\tau(a^\ast a) =
\|a(\delta_\id)\|_2^2 = 0$ and so $a= 0$, by the faithfulness of
$\tau$. If we now set $a_j = e_j$ for $j=0,1,2,3$ we obtain the
claim.
\end{proof}

\begin{remark} An inspection of the proofs of Proposition \ref{k4} and Proposition \ref{apthy} shows
that the expression $a_0 \a^n (a_1) \a^{2n} (a_2) \a^{3n} (a_3)$
can more generally be replaced by $\a^{c_0 n} (a_0) \a^{c_1 n}
(a_1) \a^{c_2 n} (a_2) \a^{c_3 n} (a_3)$ whenever
$c_0,c_1,c_2,c_3$ are integers with $c_i \neq c_{i+1}$ for all
$i=0,1,2,3$ (with the cyclic convention $c_{i+4}=c_i$).  Thus for
instance one can construct von Neumann systems for which
$$ \tau( a_0 (\a^n (a_1)) a_2 \a^n (a_3) ) = 1_A(n)$$
 for an arbitrary set $A$. We omit the details. \fin
\end{remark}

\begin{remark} The examples of non-convergence given above are not self-adjoint or positive,
and the $a_i$ are not equal to each other.  However, it is not
hard to modify the examples to give an example of a positive $a_i
= a$ for which the averages $\frac{1}{N} \sum_{n=1}^N \tau( a \a^n
(a) \a^{2n} (a) \a^{3n} (a) )$ do not converge.  Indeed, one can
repeat the above construction with
$$ a := \id + \frac{1}{100} \sum_{i=0}^3 (e_i + e_i^*);$$
this is easily seen to be positive and self-adjoint, and a
modification of the above computations then shows that
$$ \tau( a \a^n (a) \a^{2n} (a) \a^{3n} (a) ) = 1 + \frac{2}{100^4} 1_A(n)$$
for all $n$, which is enough to ensure divergence by choosing $A$
appropriately.  We leave the details to the reader. \fin
\end{remark}

\begin{remark} The group $G$ constructed here can easily be shown to have infinite
conjugacy classes (by the same methods used to prove Proposition
\ref{apthy}).  This implies that the group algebra $LG$ is a factor.
We refer to Kadison, Ringrose \cite[Theorem 6.7.5]{KadRin97} for
details.\fin
\end{remark}

\subsection{Negative averages for $k=3$}\label{3fail-sec}

We now show the negativity of various triple averages.  The main
tool is the following Behrend-type construction of a set which
avoids progressions of length three, but contains many
``hexagons'':

\begin{lemma}[Behrend-type example]\label{beh}  Let $\eps > 0$.  Then for all sufficiently large $d$,
there exists a subset $F$ of $\bbZ/d\bbZ$ such that $|F| \geq
d^{1-\eps}$, but $F$ contains no non-trivial arithmetic
progressions of length three, thus $n,n+r,n+2r \in F$ can only
occur if $r=0$.  On the other hand, the set
$$ \{ (x,h,k) \in \bbZ/d\bbZ: x,x+h,x+k,x+k+2h, x+2k+h, x+2k+2h \in F \}$$
of ``hexagons'' in $F$ has cardinality at least $d^{3-\eps}$.
\end{lemma}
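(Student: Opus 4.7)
The plan is to adapt Behrend's classical construction. Introduce parameters $n$ (dimension) and $M$ (base), set $d := (2M)^n$, and consider the no-carry digit embedding $\phi : \{0, \ldots, M-1\}^n \hookrightarrow \bbZ/d\bbZ$ defined by $\phi(v) := \sum_i v_i (2M)^{i-1}$. Let $c_0 := ((M-1)/2, \ldots, (M-1)/2)$ (with $M$ odd so $c_0 \in \bbZ^n$) and $S_r := \{v \in \{0, \ldots, M-1\}^n : \|v - c_0\|^2 = r\}$, and choose $r$ by pigeonhole so that $|S_r| \geq M^{n-2}/n$. Set $F := \phi(S_r)$. With $n$ of order $(\log d)^{1/2}$ and $M$ adjusted so that $d = (2M)^n$, the standard optimisation yields $|F| \geq d^{1-\eps}$ once $d$ is sufficiently large. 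The absence of non-trivial $3$-APs is the classical Behrend argument: a relation $\phi(a) + \phi(c) = 2\phi(b)$ in $\bbZ/d\bbZ$ with $a, b, c \in S_r$ lifts via the no-carry property (since the coordinates of $a + c$ and of $2b$ are in $\{0, \ldots, 2M-2\}$) to $a + c = 2b$ in $\bbZ^n$, and strict convexity of the sphere then forces $a = b = c$.

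For the hexagon count, the crucial observation is that the six vertices $\{x + ih + jk : (i, j) \in \{(0,0),(1,0),(0,1),(2,1),(1,2),(2,2)\}\}$ equal $\{c + \delta : \delta \in \{\pm h, \pm k, \pm (h+k)\}\}$ with common midpoint $c := x + h + k$. Lifting via no-carrying applied to the three pair sums (whose coordinates lie in $\{0, \ldots, 2M-2\}$), a hexagon in $F$ corresponds to a triple $(c, h, k) \in (\bbZ^n)^3$ with all six points $c + \delta$ in $S_r$; setting $c' := c - c_0$, this amounts to
\[
\|c'\|^2 + \|h\|^2 = \|c'\|^2 + \|k\|^2 = \|c'\|^2 + \|h+k\|^2 = r, \qquad \langle c', h\rangle = \langle c', k\rangle = 0,
\]
together with the box constraint $|c'_i + \delta_i| \leq (M-1)/2$ coordinatewise. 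The first line automatically forces the $120^\circ$ angle condition $\langle h, k\rangle = -\|h\|^2/2$.

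I would produce many such triples by a block-diagonal construction. Partition the coordinates $[n] = I_c \sqcup I_{hk}$, support $c'$ on $I_c$ and $(h, k)$ on $I_{hk}$ (making the orthogonality $\langle c', h\rangle = \langle c', k\rangle = 0$ automatic), then further split $I_{hk}$ into sub-blocks of size $a$ and, in each sub-block, place an ``equilateral pair'' drawn from a finite family in $\bbZ^a$ --- for $a = 3$ modelled on the rotations and sign-changes of $((1, 1, -2), (-2, 1, 1))$, of common squared norm $6$; for larger $a$ the family is much richer. Block-disjoint support guarantees that norms and inner products add cleanly, so the resulting $(h, k)$ is again equilateral, of squared norm $R^2$ equal to the sum of per-block contributions. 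Pigeonholing over $R^2$ and counting lattice points for $c'$ on a $|I_c|$-dimensional sphere of squared radius $r - R^2$ then yields a hexagon count of at least $d^{3-\eps}$ after optimising $n$, $M$, $a$, and the number of active blocks. The main obstacle is this quantitative balancing step: the per-block equilateral-pair count must grow fast enough to offset the coordinates it consumes, so using $a = 3$ is insufficient once $M$ grows with $d$ as demanded by the density bound, and one instead needs larger equilateral blocks in $\bbZ^a$ with $a$ moderately large, whose equilateral-pair counts grow polynomially in the block radius. Managing these nested lattice-point estimates against the box and orthogonality constraints through a multi-layer pigeonhole is where the bulk of the technical work lies.
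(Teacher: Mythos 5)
You correctly identify the key geometric structure of a hexagon: setting $c := x+h+k$, the six vertices are $\{c+\delta : \delta\in\{\pm h,\pm k,\pm(h+k)\}\}$, so that lying on a sphere reduces to the orthogonality constraints $\langle c',h\rangle=\langle c',k\rangle=0$ and the equal-norm condition $|h|^2=|k|^2=|h+k|^2$ (equivalently $\langle h,k\rangle=-|h|^2/2$). This is essentially the same observation made by the paper, which parametrizes $x=a-2b$, $h=b+c$, $k=b-c$ with $a\cdot b=b\cdot c=c\cdot a=0$ and $|c|^2=3|b|^2$; the linear change of variable $(a,b,c)\mapsto(c',h,k)=(a,b+c,b-c)$ links the two descriptions. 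Your version is arguably more transparent.

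However, the counting step, which you flag as ``technical work'' and leave to a multi-layer pigeonhole, is actually where the entire content of the lemma lies, and your proposed block-diagonal construction \emph{cannot} reach the required bound $d^{3-\eps}$ even in principle. If $c'$ is supported on a coordinate set $I_c$ and $(h,k)$ on the complementary $I_{hk}$ with $|I_c|+|I_{hk}|=n$, then the number of triples you can produce is at most $M^{|I_c|}\cdot M^{2|I_{hk}|}\leq M^{2n}\approx d^{2}$, which is orders of magnitude below $d^{3-\eps}$ regardless of how the blocks are chosen or how cleverly one counts equilateral pairs within them. Imposing the bilinear orthogonality $\langle c',h\rangle=\langle c',k\rangle=0$ by disjointness of supports is simply far too wasteful: these are only two scalar constraints, but you pay a full $|I_{hk}|$ coordinates of $c'$ (and similarly for $h,k$) to enforce them. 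The heart of the problem is how to impose $O(1)$ bilinear conditions on $(c',h,k)$ while keeping all three variables free to use all $n$ coordinates, and your outline contains no mechanism for this.

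The paper's resolution is the genuinely new idea here. It lets $a,b,c$ range over the full $n$-dimensional box (so no support restriction) and imposes the bilinear conditions in two steps: first a pigeonhole over the $O(\textrm{poly}(R^2 n))$ possible inhomogeneous values of $a\cdot b$, $b\cdot c$, $c\cdot a$, which fixes these to constants $h_1,h_2,h_3$ at a cost of only a polynomial-in-$n$ factor; then a sign-interleaving tensor-power trick, passing from dimension $n/4$ to $n$ by concatenating four such triples with sign patterns $(+,+,+,+)$, $(+,+,-,-)$, $(+,-,+,-)$ on the respective components, so that the inhomogeneous contributions $h_1,h_2,h_3$ cancel and the homogeneous constraints hold exactly. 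This yields $e^{-O(n)}R^{3n}$ solutions, which matches $d^{3-\eps}$ in the paper's regime of constant $R$ (large depending on $\eps$) and $n\sim\log d$. Note also that the paper deliberately uses this constant-range/large-dimension regime rather than Behrend's classical $n\sim\sqrt{\log d}$, $M$ large (which you adopt), precisely because the Waring-type lattice-point counts on spheres are cleanest as $n\to\infty$ at fixed $R$.
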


\begin{figure}[tb]
\centerline{\includegraphics{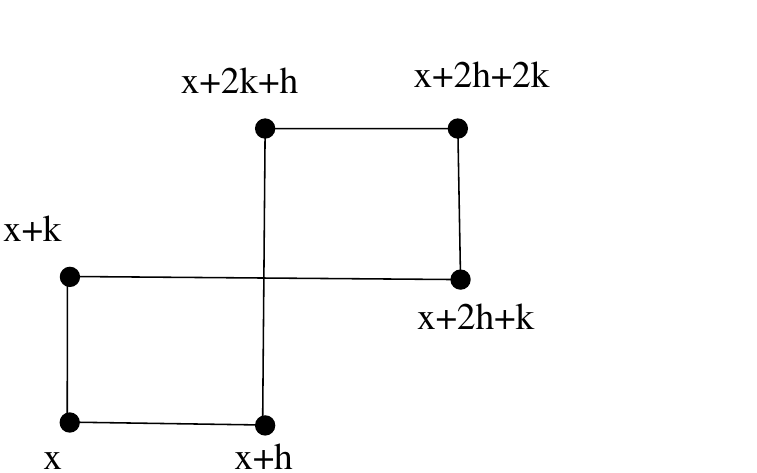}} \caption{A hexagon.  Note
the absence of arithmetic progressions of length three.}
\label{hexagon-fig}
\end{figure}

We remark that the first part of the lemma already follows directly
from the work of Behrend \cite{behrend} or the earlier work of Salem
and Spencer \cite{salem-spencer}.  The claim about hexagons will be
needed in the proof of Theorem \ref{negav2} below, but is not needed
for the simpler results in Corollary \ref{negns} or
Theorem~\ref{negtrace}.

\begin{proof} Let $R$ be a large multiple of $400$ (depending on $\eps$).  We claim that for $n$
a large enough multiple of $4$ (depending on $R$), the set
$\{-R,\ldots,R\}^n \subset \bbZ^n$ contains a subset $E$ of
cardinality $|E| \geq e^{-O(n)} R^n$ (where the implied constant in
the $O()$ notation is absolute), and which contains $\geq e^{-O(n)}
R^{3n}$ hexagons $\{ x, x+h, x+k, x+k+2h, x+2k+h, x+2k+2h\}$ but
contains no arithmetic progressions of length three. Choosing $d$
sufficiently large, letting $n$ be the largest integer such that
$(10 R)^n \leq d$ and then embedding $\{-R,\ldots,R\}^n$ in
$\bbZ/d\bbZ$ using base $10R$ (say), as in the work of Behrend or
Salem-Spencer, this claim will imply the lemma (choosing $R$
sufficiently large depending on $\eps$).

It remains to establish the claim. From the classical results on the
Waring problem (see e.g. \cite{vaughan-book}), we know that every large integer $N$ has $\sim
N^{(k-2)/2}$ representations as the sum of $k$ squares for $k$ large
enough (one can for instance take $k=5$, but for our purposes any
fixed $k$ will suffice).  Using this, we see that for any fixed
$\delta \in (0,\frac{1}{10})$, every integer $r$ such that $\delta
R^2 n \leq r \leq \frac{1}{10} R^2 n$ (say) will have $\geq
(c_\delta R)^{n - C_\delta}$ representations as the sum of $n$
squares of integers less than $R$, where $c_\delta, C_\delta > 0$
depend only on $\delta$.  In other words, the sphere $E_r := \{ x
\in \{-R,\ldots,R\}^n: |x|^2 = r \}$ has cardinality at least
$(c_\delta R)^{n - C_\delta}$.  On the other hand, such spheres have
no non-trivial progressions of length three.  Thus it will suffice
(for $n$ large enough) by the pigeonhole principle to show that
there are at least $e^{-O(n)} R^{3n}$ hexagons $\{ x, x+h, x+k,
x+k+2h, x+2k+h, x+2k+2h\}$ in $\{-R,\ldots,R\}^n$ such that
\begin{equation}\label{sol1}
 |x|^2 = |x+h|^2 = |x+k|^2 = |x+k+2h|^2 = |x+2k+h|^2 = |x+2k+2h|^2 \leq \frac{1}{10} R^2 n
\end{equation}
(note that the case when $|x|^2 \leq \delta R^2 n$ for
sufficiently small $\delta$ can be eliminated by crude estimates).

To count the solutions to \eqref{sol1}, we perform some elementary changes of variable to replace the constraints in \eqref{sol1} with simpler constraints.  We begin by observing that if $a,b,c \in \{-R/100,\ldots,R/100\}^n$ are such that
\begin{equation}\label{abc}
 a \cdot b = b \cdot c = c \cdot a = 0; \quad c \cdot c = 3 b \cdot b
\end{equation}
then $x := a-2b$, $h := b+c$, $k := b-c$ can be verified to be a
solution to \eqref{sol1}, with the map $(a,b,c) \to (x,h,k)$ being
injective, so it suffices to show that there are at least
$e^{-O(n)} R^{3n}$ triples $(a,b,c)$ with the above properties.

For reasons that will become clearer later, we will initially work in dimension $n/4$ rather than $n$.
Using the Waring problem results as before, we can find at least
$e^{-O(n)} R^{3n/4}$ triples $a,b,c \in
\{-R/400,\ldots,R/400\}^{n/4}$ such that
$$ c \cdot c = 3 b \cdot b.$$
This is one of the four constraints required for \eqref{abc}.  To obtain the remaining constraints, we use a pigeonholing trick followed by a tensor power trick.  Firstly, observe that whenever $a,b,c \in \{-R/400,\ldots,R/400\}^{n/4}$, then $a \cdot b, b \cdot c, c \cdot a$ are of order $O(R^2 n) \leq e^{O(n)}$.  Applying the pigeonhole principle, one can thus find $h_1,h_2,h_3 = O(
R^2 n )$ such that there are $e^{-O(n)} R^{3n/4}$ triples $a,b,c
\in \{-R/400,\ldots,R/400\}^{n/4}$ with
\begin{equation}\label{ab0}
a \cdot b = h_1; \quad b \cdot c = h_2; \quad c \cdot a = h_3; \quad c \cdot c = 3 b \cdot b.
\end{equation}
This is an inhomogeneous version of \eqref{abc} (at dimension $n/4$ rather than $n$), with the zero coefficients replaced by more general coefficients $h_1,h_2,h_3$.  To eliminate these coefficients we use a tensor power trick.  Let $S \subset \{-R/400,\ldots,R/400\}^{n/4} \times \{-R/400,\ldots,R/400\}^{n/4} \times \{-R/400,\ldots,R/400\}^{n/4}$ be the set of all triples $(a,b,c)$ obeying \eqref{ab0}.  We then observe that if $(a_i,b_i,c_i) \in S$ for $i=1,2,3,4$, then the vectors $a,b,c \in \bbZ^n$ defined by
$$ a := (a_1,a_2,a_3,a_4); \quad b := (b_1,b_2,-b_3,-b_4); \quad c := (c_1,-c_2,c_3,-c_4)$$
solve \eqref{abc}.  The map from the $(a_i,b_i,c_i)$ to $(a,b,c)$ is an injection from $S^4$ to the solution set of \eqref{abc}, and so we obtain at least $|S|^4 \geq e^{-O(n)} R^{3n}$ solutions to \eqref{abc} as required.
\end{proof}

This leads to a useful matrix counterexample:

\begin{lemma}[Restricted third moment can be negative]\label{negav}  There exists a positive semi-definite
Hermitian matrix $(A(j,k))_{1 \leq j,k \leq d}$ for which the
quantity
\begin{equation}\label{nrd}
 \sum_{n,r \in \bbZ/d\bbZ} A(n,n+r) A(n+r,n+2r) A(n+2r,n)
\end{equation}
is negative, where we extend $A(i,j)$ periodically in both variables by $d$.
\end{lemma}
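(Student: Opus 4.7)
The plan applies Lemma~\ref{beh} to extract, for large odd $d$, a Behrend set $F \subset \bbZ/d\bbZ$ with $|F| \geq d^{1-\epsilon}$ and no non-trivial $3$-APs, then builds from $F$ an explicit $d \times d$ positive semi-definite Hermitian matrix $A$ for which $T(A) := \sum_{n,r} A(n,n+r) A(n+r,n+2r) A(n+2r,n)$ is strictly negative.

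To guide the construction, first observe that circulant PSD matrices cannot suffice. Indeed, for Hermitian circulant $A(j,k) = p(k-j)$ with $\hat{p} \geq 0$, Fourier expansion yields
\[
T(A) \;=\; \frac{1}{d}\sum_{\xi_1 + \xi_2 - 2\xi_3 \equiv 0 \pmod d} \hat{p}(\xi_1)\,\hat{p}(\xi_2)\,\hat{p}(\xi_3) \;\geq\; 0.
\]
Likewise a rank-one PSD matrix $A = u u^*$ gives $T(A) = \sum_{n,r}|u(n)|^2|u(n+r)|^2|u(n+2r)|^2 \geq 0$. Hence $A$ must be genuinely non-circulant and of rank at least two.

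Writing $A(j,k) = \langle v_j, v_k \rangle$ as a Gram matrix, $T(A)$ becomes a sum over $3$-APs of cyclic products $\langle v_n, v_{n+r}\rangle \langle v_{n+r}, v_{n+2r}\rangle \langle v_{n+2r}, v_n \rangle$ --- quantities of Bargmann-invariant type, which can take any complex phase. The idea is to tie the vectors $v_j$ to the indicator of $F$ together with carefully chosen complex phases (most naturally twisted Gauss-sum-type objects of the form $v_j = \sum_{f\in F} e^{2\pi i \phi(f,j)/d} e_f$), so that the 3-AP-freeness of $F$ kills the dominant positive contributions along non-trivial $3$-APs, and the remaining terms carry negative real part of total magnitude exceeding the trivial positive diagonal contribution $\sum_n \langle v_n, v_n\rangle^3$ from $r=0$.

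The main obstacle I anticipate is constructing the $v_j$ explicitly while preserving PSD-ness: simple perturbations of the form $I \pm \epsilon M$ with $M$ supported on $F \times F$ are ruled out by the PSD bound (the Behrend property in fact forces the non-trivial cubic sum to vanish rather than become negative in this case), so the correct choice must couple the rank structure to the Fourier-like twists tied to $F$ in a more delicate way. Once such an $A$ is written down, verification of negativity of $T(A)$ should reduce to a direct expansion, separating the contributions at $r = 0$ from those at $r \neq 0$, and invoking the 3-AP-free property of $F$ to cancel the would-be positive cubic contributions, with the surviving negative terms dominating as $d \to \infty$.
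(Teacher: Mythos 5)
Your proposal correctly rules out circulant and rank-one PSD matrices, correctly recognizes that the Behrend construction and a complex phase twist should enter, and correctly notes that the naive perturbation $I + \epsilon M$ with $M$ supported on $F\times F$ fails because 3-AP-freeness of $F$ annihilates the off-diagonal cubic sum entirely. But these are preliminary observations; you explicitly flag ``the main obstacle I anticipate is constructing the $v_j$ explicitly while preserving PSD-ness'' and then do not resolve it. That is precisely the gap: no actual matrix $A$ is produced, and the route you do sketch (Gauss-sum-type vectors $v_j = \sum_{f\in F} e^{2\pi i \phi(f,j)/d} e_f$, i.e.\ phases tied directly to membership in $F$) does not obviously lead anywhere.

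The paper's proof uses two specific ideas that your sketch misses. First, it does \emph{not} set the support of the matrix equal to the Behrend set $F$; instead it builds a larger set $E \subset \bbZ/d\bbZ$ (with $3\mid d$) out of three randomly shifted, dilated-by-$3$ copies of $F$, one in each residue class mod $3$. The effect is combinatorial: any nontrivial 3-AP in $E$ with common difference $r\equiv 0\ (\mathrm{mod}\ 3)$ would live inside a single copy of $F$ and hence cannot exist, while 3-APs with $r\not\equiv 0\ (\mathrm{mod}\ 3)$ straddle the three copies and, by the random shifts, number about $d^{1.97}$. Second, the phases are not Gauss sums tied to $F$-membership but the simple cube-root-of-unity twist $\omega^{-j}$ with $\omega = e^{2\pi i/3}$: the matrix is the rank-two PSD sum of projections
$$A(j,k) = 1_E(j)1_E(k) + 1_E(j)\omega^{-j}\,1_E(k)\omega^{k},$$
for which the summand of \eqref{nrd} along a 3-AP in $E$ is $(1+\omega^r)^2(1+\omega^{-2r})$, equal to $+8$ when $3\mid r$ and $-1$ otherwise. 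Combining the combinatorics with this algebraic evaluation, the $\geq d^{1.97}$ negative $(-1)$-terms overwhelm the $\leq 8d$ positive terms coming only from $r=0$. Both the dilation/offset structure of $E$ (which creates the mod-$3$ imbalance of AP spacings) and the position-dependent (not $F$-dependent) $\omega^{-j}$ twist (which converts the imbalance into a sign) are the essential steps absent from your plan.
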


\begin{proof}  We will take $d$ to be a multiple of $3$, and $A(j,k)$ to take the form
$$ A(j,k) := 1_E(j) 1_E(k) + 1_E(j) \omega^{-j} 1_E(k) \omega^{k}$$
where $E \subset \bbZ/d\bbZ$ is a set to be determined later, and
$\omega := e^{2\pi i/3}$ is a cube root of unity.  The matrix
$(A(j,k))_{1 \leq j, k \leq d}$ is then the sum of two rank one
projections and is thus positive semi-definite and Hermitian.  The
expression \eqref{nrd} can be expanded as
$$
\sum_{n,r \in \bbZ/d\bbZ: n,n+r,n+2r \in E} (1 + \omega^r) (1 +
\omega^r) (1 + \omega^{-2r}).$$ The summand can be computed to
equal $8$ when $r$ is divisible by $3$, and $-1$ otherwise.  Thus,
to establish the claim, it suffices to find a set $E$ such that
the set
$$ \{ (n,r) \in \bbZ/d\bbZ: n,n+r,n+2r \in E; r \neq 0 \hbox{ mod } 3 \}$$
is more than eight times larger than the set
$$ \{ (n,r) \in \bbZ/d\bbZ: n,n+r,n+2r \in E; r = 0 \hbox{ mod } 3 \},$$
thus the length three arithmetic progressions in $E$ with spacing
not divisible by $3$ need to overwhelm the length three
progressions with spacing divisible by $3$.

To do this, we use Lemma \ref{beh} to obtain a subset $F \subset
\{1,\ldots,[d/10]\}$ of cardinality $|F| \geq d^{0.99}$ which
contains no arithmetic progressions of length three.  We then pick
three random shifts $h_0, h_1,h_2 \in \{1,\ldots,d/3\}$ uniformly
at random, and consider the set
$$ E := \{ 3(f+h_i) + i: i=0,1,2; f \in F \}$$
consisting of three randomly shifted, dilated copies of $F$.

By construction, the only length three progressions in $E$ with
spacing divisible by $3$ are the trivial progressions $n, n, n$
with $r=0$, so the total number of such progressions is at most
$d$.  On the other hand, for any fixed $f_0,f_1,f_2 \in F$, the
numbers $3(f_i+h_i)+i$ for $i=0,1,2$ have a probability $3/d$ of
forming an arithmetic progression with spacing not divisible by
$3$, due to the random nature of the $h_i$.  Thus the expected
value of the total number of such progressions is at least
$(d^{0.99})^3 \times 3 / d = 3 d^{1.97}$.  For $d$ large enough,
this gives the claim.
\end{proof}

This already gives a simple example of negative averages for
non-ergodic systems:

\begin{corollary}[Negative average for non-ergodic system]\label{negns} There exists a finite von
Neumann algebra $(\M,\tau)$ with a shift $\a$, and a non-negative
element $a \in \M$, such that $\frac{1}{2N+1} \sum_{n=-N}^N \tau(
a \a^n (a) \a^{2n} (a))$ converges to a negative number.
\end{corollary}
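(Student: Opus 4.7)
The plan is to construct the required system inside a finite-dimensional matrix algebra by combining the matrix $A$ supplied by Lemma \ref{negav} with an inner automorphism whose Ces\`aro averaging isolates the arithmetic-progression-constrained triple sum appearing in that lemma. Specifically, I will take $(\M, \tau) := (M_d(\bbC), \tfrac{1}{d}\tr)$ for $d$ large enough that Lemma \ref{negav} applies, set $a := A$ (which is non-negative in $\M$, being positive semi-definite Hermitian), and define $\a(X) := V X V^{-1}$ where $V := \mathrm{diag}(1, \omega, \omega^2, \ldots, \omega^{d-1})$ with $\omega := e^{2\pi \rmi/d}$. Since $V^d = I$ this gives a trace-preserving $*$-automorphism satisfying $\a^d = \id$.

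The core calculation will be to expand $\tau(a\a^n(a)\a^{2n}(a))$ in terms of entries of $A$. The diagonal implementation gives $\a^n(X)(i,j) = \omega^{n(i-j)} X(i,j)$, and a straightforward expansion of the trace should yield
\[\tau\bigl(a\,\a^n(a)\,\a^{2n}(a)\bigr) \;=\; \frac{1}{d}\sum_{i,j,k} A(i,j)\,A(j,k)\,A(k,i)\,\omega^{n(j+k-2i)}.\]
Because the right-hand side is periodic in $n$ with period dividing $d$, the symmetric Ces\`aro averages $\frac{1}{2N+1}\sum_{n=-N}^N$ converge to the uniform average over $n \in \bbZ/d\bbZ$, so the limit in question exists and is a concrete finite sum.

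Performing that averaging and invoking the standard character orthogonality $\frac{1}{d}\sum_{n\in\bbZ/d\bbZ}\omega^{nm} = \mathbf{1}_{m \equiv 0 \pmod d}$ collapses the triple sum onto those $(i,j,k)$ satisfying $j+k \equiv 2i \pmod d$. Reparametrising this linear constraint via $k = m$, $i = m+r$, $j = m+2r$ (a bijection of $(\bbZ/d\bbZ)^2$ onto the solution set) identifies the restricted sum, after cyclically rearranging the scalar factors of $A$, with
\[\tfrac{1}{d}\sum_{n,r\in\bbZ/d\bbZ} A(n,n+r)\,A(n+r,n+2r)\,A(n+2r,n).\]
Lemma \ref{negav} then guarantees this quantity is strictly negative, which concludes the proof. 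The real content has already been extracted in Lemma \ref{negav}, which furnishes a positive semi-definite matrix whose AP-restricted triple trace is negative; the only genuine task here is to choose an inner automorphism whose character-theoretic averaging implements that AP restriction automatically, and the diagonal unitary $V$ built from a primitive $d$-th root of unity is the natural choice.
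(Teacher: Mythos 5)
Your proposal is correct and is essentially identical to the paper's proof: the paper uses exactly the matrix algebra $M_d(\bbC)$ with normalised trace, defines the shift by $\a(B)(j,k) = e^{2\pi\rmi(j-k)/d}B(j,k)$ (which is precisely your conjugation by the diagonal unitary $V$, just written entrywise rather than as an inner automorphism), computes the same periodic trace formula, and averages using orthogonality of characters to reduce to the quantity controlled by Lemma~\ref{negav}. Your explicit observation that the shift is inner conjugation by $\mathrm{diag}(1,\omega,\ldots,\omega^{d-1})$ is a pleasant packaging of the same construction.
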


\begin{proof}  Let $a=(A(j,k))_{1 \leq j,k \leq d}$ be as in Lemma \ref{negav}.  We let $\M$ be the
von Neumann algebra of complex $d \times d$ matrices with the
normalised trace $\tau$, and with the shift
$$ \a ( B(j,k) )_{1 \leq j,k \leq d} := ( e^{2\pi i (j-k)/d} B(j,k) )_{1 \leq j,k \leq d}.$$
This is easily verified to be a shift.
We see that
$$
 \tau(a \a^n (a) \a^{2n} (a)) = \frac{1}{d} \sum_{j,k,l\in \bbZ/d\bbZ} e^{2\pi in(k+l-2j)/d} A(j,k) A(k,l)
 A(l,j).
$$
This expression is periodic in $n$ with period $d$, and has
average
$$ \frac{1}{d} \sum_{l,r \in \bbZ/d\bbZ} A(l,l+r) A(l+r,l+2r) A(l+2r,l)$$
and the claim then follows from Lemma \ref{negav}.
\end{proof}

This shows that recurrence on average for $k=3$ can fail for
non-ergodic systems.  However, this is not yet enough to establish
either Theorem \ref{ergfail} or Theorem \ref{3fail}.  To obtain
these stronger results we must introduce the \emph{crossed product
construction} in von Neumann algebras.  For a comprehensive
introduction to this concept, see \cite[Chapter 13]{KadRin97}.  We
shall just recall the key properties of this construction we need
here.

Suppose we have a finite von Neumann algebra $(\M,\tau)$,
and an action $U$ of a (discrete) group $G$ on $\M$, thus for each
$g \in G$ we have a shift $U(g): \M \to \M$ such that $U(g) U(h) =
U(gh)$ for all $g,h \in G$, with $U(\id)$ being the identity.  Then there exists a crossed product
$(\M \rtimes_U G, \tau)$ which contains both the original space
$(\M,\tau)$ and the group algebra $\bbC G$ as subalgebras.
Furthermore, in this crossed product we have
\begin{equation}\label{gaga}
 U(g) a = g a g^{-1}
\end{equation}
for all $a \in \M$ and $g \in G$, and
$$ \tau( g a ) = \tau( a g ) = 0$$
for all $a \in \M$ and $g \in G$ with $g$ not equal to the
identity.  Finally, the span of the elements $ag$ for $a \in \M$
and $g \in G$ is dense in $\M \rtimes_U G$.

\begin{remark}\label{24} The exact construction of the crossed product is not relevant for our applications,
but for the convenience of the reader we sketch one such construction here.
We first form the Hilbert space
$$ {\mathfrak h} := \ell^2(G, L^2(\tau)) = \bigoplus_{g \in G} L^2(\tau)$$
consisting of tuples $(x_g)_{g \in G}$ in $L^2(\tau)$.  This space has an action of $\M$ defined by
$$ a (x_g)_{g \in G} := ((U(g^{-1}) a) x_g)_{g \in G}$$
for $a \in \M$, and an action of $G$ (and hence $\bbC G$) defined by
$$ h (x_g)_{g \in G} := (x_{h^{-1} g})_{g \in G}.$$
One can verify that these actions combine to an action of the
twisted convolution algebra $\ell^1( G, \M )$ on ${\mathfrak h}$,
defined as the space of formal sums $\sum_{h \in G} h a_h$ with
$\sum_{h \in G} \| a_h \| < \infty$, and subject to the relations
\eqref{gaga}.  We define a trace on such sums by the formula $\tau(
\sum_{h \in G} h a_h ) := \tau( a_{\id} )$.  One can then show that
one can extend this to a finite trace on the weak operator topology
closure of $\ell^1( G, \M )$, viewed as a subset of $B({\mathfrak
h})$; this closure can then be denoted $\M \rtimes_U G$.  In other words, 
$\M \rtimes_U G$ is constructed as the von Neumann algebra generated by the action of $\M$ and $G$ on ${\mathfrak h}$. \fin
\end{remark}

\begin{example} The group von Neumann algebra $LG$ can be viewed as $\bbC \rtimes G$, where $G$ acts
trivially on the one-dimensional von Neumann algebra $\bbC$.\fin
\end{example}

We can now get a stronger version of Corollary \ref{negns}:

\begin{theorem}[Negative trace for non-ergodic system]\label{negtrace} There exists a von Neumann dynamical
system $(\M,\tau,\a)$ and a non-negative element $a \in \M$, such
that $\tau( a \a^n (a) \a^{2n} (a))$ is negative (and independent
of $n$) for all non-zero $n$.  In particular, Theorem \ref{3fail}
holds.
\end{theorem}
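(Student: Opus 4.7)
The plan is to combine the finite-dimensional counterexample from Corollary~\ref{negns} with a crossed product construction, upgrading the negative cyclic \emph{average} provided there into a constant negative triple-trace for every $n \neq 0$.

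Let $(\M_0, \tau_0, \a_0, a_0)$ be the system from Corollary~\ref{negns}, in which $\a_0$ has finite order $d$ and
\[ -c \;:=\; \frac{1}{d}\sum_{n=0}^{d-1} \tau_0(a_0\,\a_0^n(a_0)\,\a_0^{2n}(a_0)) \]
is strictly negative.  We form the crossed product $\M := \M_0 \rtimes_{\a_0} \bbZ$ as described in Remark~\ref{24}, with canonical unitary $v$ satisfying $vxv^{-1} = \a_0(x)$ for $x \in \M_0$ and canonical trace $\tau(v^k x) = \tau_0(x)\,\delta_{k,0}$.  Because $\a_0^d = \id$, the element $v^d$ is a central unitary of infinite order; the subalgebra it generates is isomorphic to $L\bbZ \cong L^\infty(\bbT)$, and $\M$ decomposes accordingly as $(\M_0 \rtimes_{\a_0}\bbZ/d\bbZ) \otimes L\bbZ$.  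As the shift, I take the combined automorphism $\a$ acting by $\a(x) = \a_0(x)$ for $x \in \M_0$ and $\a(v) = w v$ for some fixed $w = e^{2\pi i \theta}$ with $\theta$ irrational.  A direct check (using $vxv^{-1}=\a_0(x)$) shows $\a$ is a trace-preserving $*$-automorphism, and on the central $L\bbZ$-factor it acts by an irrational rotation, so $\a$ itself has infinite order.

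The desired non-negative element $a := b^* b$ is built from a suitable $b \in \M$ involving $a_0^{1/2}$ and the unitaries $v^j$ for $0 \leq j < d$.  Expanding $\tau(a\,\a^n(a)\,\a^{2n}(a))$ along the grading $\M = \bigoplus_{k\in\bbZ} v^k\M_0$ and using the relation $xv = v\a_0^{-1}(x)$, the triple trace becomes a finite sum of terms of the form $w^{nr}\,\tau_0(\text{product of }\a_0\text{-shifts of }a_0^{1/2})$ indexed by integers $r$.  The point is to arrange $b$ so that the $\tau_0$-coefficients organize themselves, via the orthogonality of characters on $\bbZ/d\bbZ$, into a single $r = 0$ contribution that equals the cyclic average $-c$ of Corollary~\ref{negns}, independent of $n \neq 0$; the irrational phase $w^d$ on the central $L\bbZ$-factor then ensures that this constancy holds throughout $\bbZ \setminus \{0\}$ rather than merely on $\bbZ \setminus d\bbZ$.

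The chief obstacle is the combinatorial bookkeeping of this expansion—matching the characters of $\bbZ/d\bbZ$ against the $n$-dependence of the trace and handling correctly the nontrivial central $L\bbZ$-factor to avoid periodic collapse.  Once this is established, Theorem~\ref{3fail} follows at once, since $\Re \tau(a\,\a^n(a)\,\a^{2n}(a)) = -c < 0$ for every $n \neq 0$, a set of full (and in particular positive) lower density.
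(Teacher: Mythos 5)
Your construction has a genuine gap, though the broad strategy---upgrade the negative cyclic average of Corollary~\ref{negns} into a constant negative triple trace via a crossed product---is the right one. The problem is that your shift $\a$ acts on the base algebra $\M_0$ by the \emph{non-trivial} automorphism $\a_0$, and you rely on a \emph{single} crossed-product unitary $v$. Expanding along the grading $\M=\bigoplus_k v^k\M_0$ with $a=\sum_k v^k g_k$ one finds
\[
\tau\big(a\,\a^n(a)\,\a^{2n}(a)\big) = \sum_{k_1+k_2+k_3=0} w^{n(k_2+2k_3)}\,\tau_0\!\left(\a_0^{k_1}(g_{k_1})\,\a_0^{n-k_3}(g_{k_2})\,\a_0^{2n}(g_{k_3})\right),
\]
and the $n$-dependence enters in two ways you do not control. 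The scalar phase $w^{n(k_2+2k_3)}$ is genuinely $n$-dependent whenever $k_2+2k_3\neq 0$, yet carries no algebraic constraint: unlike a central unitary with trace-zero powers, a phase cannot kill a term, only rotate it. And even if you engineered the Fourier data so that only $k_2+2k_3=0$ survives, the residual terms $\sum_k \tau_0\big(\a_0^k(g_k)\,\a_0^{n-k}(g_{-2k})\,\a_0^{2n}(g_k)\big)$ still depend on $n$ modulo $d$ through the exponents $n-k$ and $2n$; they are periodic, not constant, and passing to the center $L\bbZ$ does not change this.

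The paper removes both obstructions at once with a construction your proposal does not replicate. It forms $\M:=\M'\rtimes_U\bbZ^2$ with \emph{two} commuting unitaries $m$ and $u$: only $m$ implements the automorphism $\beta$ of $\M'$, while $u$ is \emph{central} with $\tau(u^k)=0$ for $k\neq 0$. Crucially, the dynamical shift $\a$ is chosen to act \emph{trivially} on $\M'$ and on $u$, with $\a(m)=mu$. Then the only $n$-dependence in $\tau(a\,\a^n(a)\,\a^{2n}(a))$ comes through the central factor $u^{n(h_2+2h_3)}$, whose trace vanishes unless $h_2+2h_3=0$; together with $h_1+h_2+h_3=0$ from the $m$-grading this pins $(h_1,h_2,h_3)=(h,-2h,h)$, and the resulting expression $\sum_h\tau\big(g_h\,\beta^h(g_{-2h})\,\beta^{-h}(g_h)\big)$ is manifestly independent of $n$. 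You need a separate trace-zero central ``detector'' unitary to enforce the extra linear constraint, and you need the shift to fix the coefficient algebra pointwise so that no $n$-dependence survives in the $\tau_0$-factors; multiplying a single $v$ by an irrational scalar accomplishes neither.
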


\begin{proof} Let $(\M',\tau,\beta)$ be a von Neumann dynamical system to be chosen later.
Using the crossed product construction, we can build an extension
$\M := \M' \rtimes_U \bbZ^2$ of $\M'$ generated by $\M'$ and two
commuting unitary elements $u, m$, such that
\begin{equation}\label{mam}
m a m^{-1} = \beta(a)
\end{equation}
and
$$ u a u^{-1} = a$$
for all $a \in \M'$.  In particular, the element $u$ is central.
It is then easy to see that we can build\footnote{To build $\a$ explicitly, we can view $\M$ as an algebra of operators on the Hilbert space ${\mathfrak h} := \bigoplus_{(j,k) \in \bbZ^2} L^2(\tau)$ as per Remark \ref{24}, and let $\alpha$ be the conjugation $a \mapsto W a W^*$ by the unitary operator $W: {\mathfrak h} \to {\mathfrak h}$ defined by $W( x_{(j,k)} )_{(j,k) \in \bbZ^2} := (x_{(j,k-j)} )_{(j,k) \in \bbZ^2}$.} a shift $\a$ on $\M$ for
which
$$ \a(a) = a; \quad \a(u) = u; \quad \a(m) = mu$$
for all $a \in \M'$, since the action of the group $\bbZ^2$
generated by $m$ and $u$ on $\M'$ is unchanged when one replaces
$m$ by $mu$.

Now let $a \in \M$ be an element of the form
$$ a = \left(\sum_{i \in \bbZ} f_i m^i\right) \left(\sum_{i \in \bbZ} f_i m^i\right)^*$$
where $f_i \in \M'$, and only finitely many of the $f_i$ are
non-zero. This is clearly non-negative, and can be simplified by
\eqref{mam} to the power series
$$ a = \sum_{h \in \bbZ} g_h m^h$$
where the $g_h \in \M'$ are the twisted autocorrelations of the $f_j$,
$$ g_h = \sum_{j \in \bbZ} f_{j+h} \beta^h( f_j^* ).$$
Let $n$ be non-zero.  The expression $\tau( a \a^n (a) \a^{2n} (a)
)$ can be expanded as
$$ \sum_{h_1,h_2,h_3 \in \bbZ} \tau( g_{h_1} m^{h_1} g_{h_2} (mu^n)^{h_2} g_{h_3} (mu^{2n})^{h_3} ).$$
The net power of the central element $u$ here is $n(h_2+2h_3)$,
and the net power of $m$ is $h_1+h_2+h_3$.  Thus we see that the
trace vanishes unless $h_2+2h_3 = h_1+h_2+h_3 = 0$, or
equivalently if $(h_1,h_2,h_3) = (h,-2h,h)$ for some $h$.
Performing this substitution and using \eqref{mam}, we simplify
this expression to
\begin{equation}\label{has}
 \sum_{h \in \bbZ} \tau( g_h \beta^h (g_{-2h}) \beta^{-h} (g_h) ).
\end{equation}
In particular, this expression is now manifestly independent of $n
\neq 0$.

We now select $\M'$ to be the commutative von Neumann system
$L^\infty(\bbZ/d\bbZ)$ with the shift $\beta( f(x) ) := f(x+1)$
and the normalised trace.  Thus the $g_h$ and $f_h$ are now
complex-valued functions on $\bbZ/d\bbZ$, and the above expression
can be expanded explicitly as
$$ \frac{1}{d} \sum_{x \in \bbZ/d\bbZ} \sum_{h \in \bbZ} g_h(x) g_{-2h}(x+h) g_h(x-h).$$
Meanwhile, the $g_h(x)$ by definition can be written as
$$ g_h(x) = \sum_{j \in \bbZ} f_{j+h}(x) \overline{f_j(x+h)}.$$
We pick a large number $N$ to be chosen later, and set
$$ f_j(x) := b(x,x+j) 1_{1 \leq j \leq Nd}$$
where $b: \bbZ/d\bbZ \times \bbZ/d\bbZ \to \bbC$ is a function
periodic in two variables of period $d$ to be chosen later.  Then
we can compute
$$ g_h(x) = \left(1 - \frac{|h|}{dN}\right)_+ N A(x,x+h) + O(1) $$
where
\begin{equation}\label{biz}
 A(x,y) := \sum_{z \in \bbZ/d\bbZ} b(x,z) \overline{b(y,z)}
\end{equation}
and $O(1)$ denotes a quantity that can depend on $d$ (and $b$) but
is uniformly bounded in $N$.  The expression \eqref{has} can then
be computed to be
$$ C \frac{N^4}{d} \sum_{x, h \in \bbZ/d\bbZ} A(x,x+h) A(x+h,x-h) A(x-h,x) + O( N^3 )$$
where $C > 0$ is the explicit constant
$$ C := \int_{{\mathbb R}} (1-|h|)_+^2 (1-|2h|)_+\ dh.$$
By the substitution $x = m+r, h = r$, we can re-express this as
\begin{equation}\label{tao}
C \frac{N^4}{d} \sum_{m, r \in \bbZ/d\bbZ} A(m,m+r) A(m+r,m+2r) A(m+2r,m) + O( N^3 ).
\end{equation}
Now, let $d$ and $A(j,k)$ be as in Lemma \ref{negav}.  By the
spectral theorem (which in particular allows one to construct self-adjoint square roots of positive definite matrices), we can find $b(x,y)$ so that \eqref{biz} holds.
The summand in \eqref{tao} is then negative, and the claim follows
by choosing $N$ large enough depending on all other parameters.
\end{proof}

Of course, by Theorem \ref{thm:gen-triple-aves}, one cannot have
such a result when the underlying shift $\a$ is ergodic.  On the
other hand, one can extend Corollary \ref{negns} to the ergodic
case:

\begin{theorem}\label{negav2} There exists an \emph{ergodic} von Neumann system $(\M,\tau,\a)$  and a
non-negative element $a \in \M$, such that $\frac{1}{2N+1}
\sum_{n=-N}^N \tau( a \a^n (a) \a^{2n} (a))$ converges to a
negative number. In particular, Theorem \ref{ergfail} holds.
\end{theorem}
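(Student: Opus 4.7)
The plan is to upgrade the non-ergodic crossed-product construction from Theorem \ref{negtrace} to an ergodic one while preserving the algebraic skeleton responsible for the negative triple trace; the extra hexagon clause of Lemma \ref{beh}---not needed for Corollary \ref{negns} or Theorem \ref{negtrace}---is the combinatorial input that makes this upgrade work.

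First I would start from the $\bbZ^2$-crossed product $\M = \M' \rtimes_U \bbZ^2$ used in Theorem \ref{negtrace}, where $\M' = L^\infty(\bbZ/d\bbZ)$ with translation shift $\beta$ and with generators $m,u$ satisfying $m a m^{-1} = \beta(a)$ and $u a = au$ for $a \in \M'$. There the shift $\alpha$ fixed both $\M'$ and $u$, so the system was far from ergodic. To repair this I would replace $\bbZ^2$ by a richer acting group (for instance a discrete Heisenberg-type nilpotent group over $\bbZ/d\bbZ$), letting $\alpha$ act on $\M'$ by a minimal rotation, on $m$ by the unipotent rule $m \mapsto mu$, and on $u$ by a nontrivial rotation chosen so that all commutation relations are preserved and no element of the crossed product other than the scalar is $\alpha$-invariant. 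A Fourier decomposition of an $\alpha$-invariant element along the group-grading would then pin the invariants down to $\bbC \cdot 1$, giving ergodicity.

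Next I would rerun the trace computation from the proof of Theorem \ref{negtrace} with the same ansatz $a = (\sum_i f_i m^i)(\sum_i f_i m^i)^*$. With $u$ no longer $\alpha$-fixed, the Kronecker constraints that previously collapsed the triple $(h_1,h_2,h_3)$ in $\tau(a\alpha^n(a)\alpha^{2n}(a))$ to the single diagonal $(h,-2h,h)$ would survive only after Ces\`aro averaging in $n$: one linear constraint (from the $m$-power) persists, but the second (from the $u$-power) holds only on average, leaving a two-parameter family of surviving configurations. Unfolding the autocorrelations $g_h$ in terms of an auxiliary kernel $b(x,y)$ exactly as at the end of the proof of Theorem \ref{negtrace}, the leading-order (in the tensor-power parameter $N$ defining the $f_j$) contribution to $\frac{1}{2N+1}\sum_{n=-N}^N \tau(a\alpha^n(a)\alpha^{2n}(a))$ becomes a sum counting precisely the hexagonal configurations $\{x, x+h, x+k, x+k+2h, x+2k+h, x+2k+2h\}$ in a Behrend-type set $F$, weighted by the cubic phase factor $(1+\omega^r)(1+\omega^r)(1+\omega^{-2r})$ from Corollary \ref{negns}.

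Finally I would choose $b(x,y)$ so that $A(x,y) := \sum_z b(x,z)\overline{b(y,z)}$ is the positive-semidefinite matrix of Lemma \ref{negav} built on the set $F$ of Lemma \ref{beh}. Since $F$ contains no nontrivial 3-APs but at least $d^{3-\eps}$ hexagons, the surviving sum is dominated by hexagonal terms with $r \not\equiv 0 \pmod 3$, giving a strictly negative phase constant, while the AP contribution is merely diagonal and therefore negligible. For $N$ and then $d$ large enough this yields a strictly negative Ces\`aro limit, whose existence is guaranteed by Theorem \ref{thm:gen-triple-aves}. The main obstacle is engineering the enlargement of the acting group and the shift $\alpha$ so that $\alpha$ remains a $*$-automorphism compatible with all crossed-product relations, is ergodic, and has the property that the two-parameter family surviving the $n$-averaging in the trace computation corresponds exactly to hexagonal (rather than AP) configurations in $F$; it is precisely this correspondence that forces the use of the hexagon clause of Lemma \ref{beh}, rather than merely the AP-free clause used in the earlier counterexamples.
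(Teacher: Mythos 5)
Your high-level diagnosis is right that the hexagon clause of Lemma~\ref{beh} is the extra ingredient needed for the ergodic case, but the execution you describe conflates two distinct combinatorial mechanisms and omits the one that actually makes the hexagon count useful. In Lemma~\ref{negav}, negativity of the 3-AP sum comes from the cube-root-of-unity phase factor $(1+\omega^r)^2(1+\omega^{-2r})$, which is $-1$ off the residue class $r\equiv 0\ (3)$; this is a statement about arithmetic progressions in a set $E$ built from $F$ by random shifts, and it uses only the AP-free clause, not hexagons. By contrast, the paper's proof of Theorem~\ref{negav2} reduces (after Ces\`aro averaging in $n$) to showing that the hexagon correlation sum
\[ X = \sum_{x,h,k} \overline{b(x)}\,b(x+h)\,b(x+k)\,\overline{b(x+k+2h)}\,\overline{b(x+2k+h)}\,b(x+2k+2h) \]
can be made negative, and this expression carries \emph{no} $\omega$-phase weighting at all. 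The negativity is achieved by a second-moment argument: take $b(x)=\epsilon_x 1_F(x)$ with independent random signs $\epsilon_x$, observe that the AP-free property forces the expectation of $X$ to be $O(d)$, while the $\ge d^{3-\eps}$ non-degenerate hexagons force the variance to be $\gg d^{2.99}$, so some sign choice makes $X<0$. Your proposal never mentions this random-sign step, and your claim that the hexagon sum acquires the $(1+\omega^r)^2(1+\omega^{-2r})$ phase and is therefore negative term-by-term is not correct — there is no mod-$3$ phase weighting in the ergodic trace computation.

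There is also a gap in the construction of the ergodic system itself. The paper does not extend the $\bbZ^2$-crossed product of Theorem~\ref{negtrace} to a Heisenberg-type crossed product; it uses the noncommutative $d$-torus generated by $e_1,e_2$ with $e_2e_1=u\,e_1e_2$, and takes the shift to be the phase rotation $\alpha(e_i)=\theta_i e_i$ with generic $\theta_1,\theta_2$. Ergodicity is then immediate from genericity of the phases, and the Ces\`aro averaging picks out exactly the diagonal coefficients $\sum_{h,k}c_{h,k}c_{-2h,-2k}c_{h,k}\tau(e_1^he_2^ke_1^{-2h}e_2^{-2k}e_1^he_2^k)$, which after Poisson summation and the choice $c_h=b(h)1_{[1,M]}(h)$ unfolds to the hexagon sum $X$. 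Your replacement of $\bbZ^2$ by a nilpotent group, together with a rotation on $\M'$, a unipotent action on $m$, and a rotation on $u$, is left as a sketch; you neither verify that the automorphism relations are consistent, nor that the $n$-averaging isolates a workable configuration, nor that the resulting configuration is the hexagon (as opposed to something else). The assertion that ``the two-parameter family surviving the $n$-averaging corresponds exactly to hexagonal configurations'' is exactly the step that needs an explicit computation, and the proposal does not supply it.
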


\begin{proof} Let $d$ be a large odd number, and let $u := e^{2\pi i/d}$ be a primitive $d^{th}$ root
of unity.  We will let $\M$ be a completion of the
\emph{non-commutative torus}.  This is obtained by first forming the
C$^\ast$-algebra generated by two unitary generators $e_1, e_2$
obeying the commutation relation
$$ e_2 e_1 = u e_1 e_2$$
and with all of the expressions $e_1^j e_2^k$ having zero trace
unless $j=k=0$, in which case the trace is $1$; and then completing
in the weak operator topology resulting from the
Gel'fand-Naimark-Segal representation on $L^2(\tau)$. One can
represent this finite von Neumann algebra more explicitly by
letting $e_1, e_2$ act on $L^2( (\bbR/\bbZ)^2 )$ by the maps $e_1
f(x,y) := e^{2\pi i x} f(x,y)$ and $e_2 f(x,y) := e^{2\pi i y} f(x +
1/d, y)$, with the trace $\tau$ given by $\tau(a) = \langle \Omega,
a \Omega \rangle_{L^2( (\bbR/\bbZ)^2)}$, where $\Omega \equiv 1$ is
the identity function on $(\bbR/\bbZ)^2$.

We let $\theta_1, \theta_2 \in S^1$ be generic unit phases, and
then define the shift $\a$ on $\M$ by setting
$$ \a (e_1) := \theta_1 e_1; \quad \a (e_2) := \theta_2 e_2.$$
It is easy to see that this is a shift.  If $\theta_1, \theta_2$ are
generic (so that $\theta_1^j \theta_2^k$ is not a root of unity for
any $(j,k) \neq (0,0)$), this shift is easily verified to be ergodic
(as one can verify the mean ergodic theorem by hand on the
generators $e_1^j e_2^k$, and then argue as in the proof of
Theorem~\ref{k4} using the faithfulness of $\tau$).

We set $a := gg^*$, where $g$ is an element of the form
$$ g := \sum_{k=1}^M \sum_{h \in \bbZ} c_h e_1^h e_2^k,$$
$M$ is a large number (much larger than $d$) to be chosen later, and
$c_h$ are complex numbers to be chosen later, all but finitely many
of which are zero.  Clearly $a$ is non-negative.  A computation
shows that
$$ a = \sum_{h,k \in \bbZ} c_{h,k} e_1^h e_2^k$$
where
\begin{equation}\label{dhk}
 c_{h,k} := M \left(1 - \frac{|k|}{M}\right)_+ \sum_{l \in \bbZ} c_{l+h} \overline{c_l} u^{kl}.
 \end{equation}
Since
$$ \a^n (a) = \sum_{h,k \in \bbZ} c_{h,k} \theta_1^{hn} \theta_2^{kn} e_1^h e_2^k,$$
some Fourier analysis and the genericity of $\theta_1$, $\theta_2$
show that the expression
$$ \frac{1}{2N+1} \sum_{n=-N}^N \tau( a \a^n (a) \a^{2n} (a))$$
converges as $N \to \infty$ to the expression
$$ \sum_{h,k} c_{h,k} c_{-2h,-2k} c_{h,k} \tau( e_1^h e_2^k e_1^{-2h} e_2^{-2k} e_1^h e_2^k ).$$
The trace here simplifies to $u^{3hk}$.  Inserting \eqref{dhk}, we can expand this expression as
\begin{equation}\label{mus}
 M^3 \sum_{h,k,l_1,l_2,l_3 \in \bbZ} \phi(k/M) c_{l_1+h} \overline{c_{l_1}} c_{l_2-2h}
 \overline{c_{l_2}} c_{l_3+h} \overline{c_{l_3}} u^{kl_1 - 2kl_2 + kl_3 + 3hk}
 \end{equation}
where
$$ \phi(x) := (1-|x|)_+^2 (1-|2x|)_+.$$
By Poisson summation, the expression
$$ \sum_k \phi(k/M) u^{kl_1 - 2kl_2 + kl_3 + 3hk}$$
can be computed to be $M \int_\bbR \phi(x)dx + O(1)$ if
$l_1-2l_2+l_3+3h$ is divisible by $d$, and $O(1)$ otherwise, where
$O(1)$ denotes a quantity that can depend on $d$ but is bounded
uniformly in $M$.  If we then assume that the $c_h$ vanish for $h$
outside of $\{1,\ldots,M\}$ and are bounded uniformly in $M$, we
can thus expand \eqref{mus} as
$$
C M^4 \sum_{h,l_1,l_2,l_3 \in \bbZ:\ d | l_1-2l_2 + l_3 + 3h}
c_{l_1+h} \overline{c_{l_1}} c_{l_2-2h} \overline{c_{l_2}} c_{l_3+h}
\overline{c_{l_3}}  + O( M^7 )$$ for some absolute constant $C>0$.

If we now set $c_h := b(h) 1_{[1,M]}(h)$, where $b: \bbZ/d\bbZ \to
\bbC$ is a periodic function with period $d$ and independent of
$M$ to be chosen later, we can express this as
$$ C_d M^8 \sum_{h,l_1,l_2,l_3 \in \bbZ/d\bbZ:\ l_1-2l_2+l_3+3h=0} b(l_1+h) \overline{b(l_1)} b(l_2-2h)
\overline{b(l_2)} b(l_3+h) \overline{b(l_3)} + O( M^7 )$$ for some
$C_d > 0$ depending on $d$ but independent of $M$. Making the
substitution $l_1 = x; l_2 = x+k+2h; l_3 = x+2k+h$, we see that we
will be done as soon as we are able to find $d, b$ for which the
expression
$$ X := \sum_{x,h,k \in \bbZ/d\bbZ} \overline{b(x)} b(x+h) b(x+k) \overline{b(x+k+2h)} \overline{b(x+2k+h)} b(x+2k+2h)$$
is negative.

To do this, we again appeal to Lemma \ref{beh} to find a set $F
\subset \bbZ/d\bbZ$ of size at least $d^{0.99}$ (assuming $d$
large enough), which contains no arithmetic progressions of length
three, but contains at least $d^{2.99}$ hexagons $x, x+h, x+k,
x+k+2h, x+2k+h, x+2k+2h$.  We then set
$$ b(x) := \epsilon_x 1_F(x)$$
where the $\epsilon_x = \pm 1$ are independent signs, thus $X$ is now the random variable
$$ X = \sum_{x,h,k: x,x+h,x+k,x+k+2h,x+2k+h,x+2k+2h \in F} \epsilon_x \epsilon_{x+h} \epsilon_{x+k}
\epsilon_{x+2h+k} \epsilon_{x+h+2k} \epsilon_{x+2h+2k}.$$ We will
show (for $d$ large enough) that the standard deviation of $X$
exceeds its expectation, which shows that there exists a choice of
signs for which $X$ is negative.

We first compute the expectation of $X$.  The only summands with
non-zero expectation occur when all the signs cancel, which only
occurs when $h=0$ or when $k=0$, as can be seen by an inspection
of the number of ways to collapse the hexagon in Figure
\ref{hexagon-fig}; here we need the hypothesis that $d$ is odd.
But as $F$ contains no non-trivial arithmetic progressions, there
are no summands for which only one of the $h,k$ are zero, so we
are left only with the $h=k=0$ terms, of which there are at most
$d$.  Thus the expectation of $X$ is at most $d$.

Now we compute the variance.  There are at least $d^{2.99}$
hexagons in $F$, and all but $O(d^2)$ of them are non-degenerate
in the sense that the six vertices of the hexagon are all
distinct.  The summands in $X$ corresponding to non-degenerate
hexagons have variance $1$, and the correlation between any two
summands in $X$ either zero or positive (the latter occurs when
two summands are permutations of each other).  Thus the variance
of $X$ is $\gg d^{2.99}$, so the standard deviation is $\gg
d^{1.495}$, and the claim follows.
\end{proof}

\subsection{Negative trace for $k = 5$}\label{5fail-sec}

Now we show negative traces can occur even in the ergodic case when $k = 5$.

\begin{theorem}\label{negavk} There exists an \emph{ergodic} von Neumann dynamical system $(\M,\tau,\a)$
and a non-negative element $a \in \M$, such that $\tau( a \a^n (a)
\a^{2n} (a) \a^{3n} (a) \a^{4n} (a))$ is negative for every
non-zero $n$.
\end{theorem}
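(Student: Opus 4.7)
The plan is to combine the $k=5$ counterexample of Bergelson--Host--Kra--Ruzsa (BHK) with a non-commutative extension of nilpotent type, in the spirit of Theorems \ref{k4} and \ref{negav2}. BHK produce, for $k=5$ (and more generally for odd $k \geq 5$), a commutative nilpotent dynamical system and a non-negative function whose $k$-fold correlation along arithmetic progressions is forced to be strictly negative by a character-theoretic cancellation, which is precisely the obstruction defeating the naive polynomial Szemer\'edi lower bound at $k=5$. Our task is to lift this counterexample to an ergodic von Neumann system where the trace is pointwise negative.

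To do this, I would take a group $G$ -- a discrete extension of a Heisenberg-type group over $\bbZ/d\bbZ$ -- equipped with an automorphism $T: G \to G$ such that all non-trivial iterates $T^n$ have no non-identity fixed points, ensuring as in the proof of Theorem \ref{k4} that the shift $\a$ on $\M := LG$ induced by $T$ is ergodic. Let $a = gg^*$ for an element of the form $g = \sum_h c_h e^{(h)}$ built out of distinguished group monomials $e^{(h)}$ with coefficients $c_h = b(h)\mathbf{1}_{[1,M]}(h)$, where $b: \bbZ/d\bbZ \to \bbC$ is a function of BHK type and $M \gg d$. Expanding the 5-fold trace using the group law and the nilpotent commutator relations (which should produce central $d$-th roots of unity $\omega$ analogous to the $u$ in Theorem \ref{negav2}), the trace $\tau(a\,\a^n(a)\,\a^{2n}(a)\,\a^{3n}(a)\,\a^{4n}(a))$ should collapse via the trace-vanishing property for non-identity group elements to a sum indexed by tuples satisfying the linear constraints enforced by words representing the identity in $G$, twisted by the BHK 5-fold phase $\Phi_n$ and a BHK-style correlation of the coefficients $b(\cdot)$.

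The final step is to invoke the BHK counterexample: choose $b$ to make this sum strictly negative for every non-zero $n$. The main obstacle is pointwise -- not merely averaged -- negativity, which distinguishes this result from Theorems \ref{negav2} and \ref{ergfail}. In the ergodic setting, achieving this requires engineering $G$ so that the $n$-dependence of the trace either collapses entirely (through central relations, reminiscent of the crossed product in Theorem \ref{negtrace}, but without destroying ergodicity) or reduces to a single BHK phase factor whose real part is uniformly negative as a function of $n$. The parity restriction to odd $k$ should enter through the symmetry of the BHK $k$-fold character sums: for even $k$, one obtains partial real-valued cancellations (of the form $\Re(z) + \Re(\bar z) = 2\Re(z)$) that prevent pointwise negativity and leave only averaged negativity accessible by these methods.
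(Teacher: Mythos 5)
Your high-level instinct -- combine the BHK $k=5$ counterexample with an ergodic group-theoretic extension whose automorphism has no nontrivial fixed points -- is right, and you correctly identify the obstacle that one needs \emph{pointwise} rather than merely averaged negativity. But the concrete mechanism you propose diverges from the paper's construction in ways that leave a real gap.

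The paper does \emph{not} use a group von Neumann algebra $LG$ for a Heisenberg-type $G$, nor does it encode BHK as a phase factor twisting coefficients $c_h = b(h)\mathbf{1}_{[1,M]}(h)$. That is the mechanism of Theorem~\ref{negav2} ($k=3$, via the non-commutative torus), and it inherently produces $n$-dependence that is only controlled \emph{on average} (through Fourier analysis and genericity of the rotation phases). For pointwise negativity the paper does something structurally different: it forms the crossed product $\M := L^\infty(X^G, \mu^G)\rtimes_U G$, where $(X,\mu,S)$ is the BHK system and $G$ is the auxiliary ``square group'' of Proposition~\ref{apthy2}. The shift acts diagonally by $S$ on the Bernoulli factor $X^G$ and by $T$ on $G$, and ergodicity comes from the Bernoulli ergodicity plus fixed-point-freeness of $T$. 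The test element is not $gg^*$ with BHK coefficients; it is
\[
a \;=\; \sum_i 1_A\cdot(2 - e_i - e_i^{-1})\cdot 1_A,
\]
built directly from the BHK set $A$ conjugated by the group generators $e_i$. When one expands the five-fold trace, the trace-vanishing property of the crossed product kills every term whose net group word is nontrivial, and Proposition~\ref{apthy2} is engineered so that the surviving tuples $(g_0,\ldots,g_4)$ are exactly the all-identity tuple (contributing $\mu(A\cap S^nA\cap\cdots\cap S^{4n}A)\le\mu(A)^{100}$, tiny by BHK) and the permutations of $\{e_0^{\pm1},\ldots,e_4^{\pm1}\}$ (contributing, via the Bernoulli factorization~\eqref{slam} and the prescribed pair correlation~\eqref{mox}, exactly $\pm\mu(A)^{10}$). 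No central roots of unity or phase cancellations enter: the BHK input used is the measure-theoretic pair $\mu(A\cap S^nA)=\mu(A)^2$ together with the smallness of the five-fold intersection, not character sums.

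Finally, your diagnosis of the odd-$k$ restriction is not the paper's: it is not about $\Re(z)+\Re(\bar z)$ cancellations in BHK character sums. The sign constraint comes from expanding $\prod_{j=0}^{k-1}(2 - e_{i_j} - e_{i_j}^{-1})$: the all-nontrivial term carries a factor $(-1)^k$, which must equal $-1$ for the surviving contribution to be negative. The BHK construction itself works for all $k\ge 5$; the odd restriction is on the algebraic bookkeeping, and the paper remarks it can almost certainly be removed. As written, your Heisenberg/group-von-Neumann-algebra route would reproduce an averaged result à la Theorem~\ref{negav2}, but it lacks the crossed-product-with-Bernoulli-base structure and the square-group word-selection machinery (Proposition~\ref{apthy2}) that are what actually deliver $n$-independence and hence pointwise negativity.
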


This establishes the $k=5$ case of Theorem \ref{5fail}.  A similar
argument holds for all larger odd values of $k$, which we leave to
the interested reader; we restrict here to the case $k=5$ simply
for ease of notation.

To prove this theorem, our starting point is the following result of Bergelson, Host, Kra, and Ruzsa \cite{bhk}:

\begin{theorem}\label{bhkthm}  For any $\delta > 0$, there exists a measure-preserving system $(X, {\mathcal X}, \mu, S)$
and a measurable set $A \subset X$ with $0 < \mu(A) < \delta$ such
that
$$ \mu( A \cap S^n (A) \cap S^{2n} (A) \cap S^{3n} (A) \cap S^{4n} (A) ) \leq \mu(A)^{100}$$
(say) and
\begin{equation}\label{mox}
 \mu(A \cap S^n (A)) = \mu(A)^2
\end{equation}
for every non-zero integer $n$.
\end{theorem}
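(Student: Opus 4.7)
The plan is to reproduce the construction of Bergelson, Host, Kra, and Ruzsa \cite{bhk}, which combines a $2$-step nilsystem (to make the $2$-fold correlation factor exactly) with a sparse Behrend-type refinement (to suppress the $5$-fold correlation on resonant $n$). The general strategy is to exploit the fact that the characteristic factor for length-$5$ averages sees genuinely quadratic Fourier data, whereas the $2$-fold correlation only sees the linear (Kronecker) data; hence one can construct a set $A$ that behaves independently at pairs but is algebraically constrained when five shifts are intersected.

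A natural starting point is the Heisenberg-type skew product $S(x,y) := (x+\alpha,\,y+2x+\alpha)$ on $\bbT^2$ with Lebesgue measure, for which $S^n(x,y) = (x+n\alpha,\,y+2nx+n^2\alpha)$; the quadratic term $n^2\alpha$ captures the $2$-step nilpotent structure. For a thin fibre set $A_0 := \{(x,y) : \|y\| < \eta\}$ (with $\|\cdot\|$ the distance to the nearest integer), the map $x\mapsto 2nx + n^2\alpha$ is a $|2n|$-to-one covering of $\bbT$ for every $n\neq 0$, so $(y,\, y + 2nx + n^2\alpha)$ is jointly uniform on $\bbT^2$; this gives at once the exact identity $\mu(A_0\cap S^n A_0) = (2\eta)^2 = \mu(A_0)^2$. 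Moreover the sequence $u_j := y + 2jnx + j^2 n^2\alpha$ has constant second difference $2n^2\alpha$, so the constraints $\|u_j\| < \eta$ for $j=0,\ldots,4$ force $\|2n^2\alpha\| \leq 4\eta$; outside the quadratic Bohr set $B := \{n:\|2n^2\alpha\|\leq 4\eta\}$ the $5$-fold correlation already vanishes.

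The main obstacle is the resonant contribution from $n\in B$, where a direct count only yields a $5$-fold correlation of order $\mu(A_0)^2$, whereas one needs $\mu(A)^{100}$. To suppress this contribution one must refine both the system and the set, as in \cite{bhk}: enlarge the base to a higher-dimensional $2$-step nilmanifold carrying several independent quadratic cocycles $c_i$ over the same rotation, and take $A$ to be the intersection of thin conditions $\|y_i\|<\eta$ across the fibres together with a Behrend-type sparse constraint on the base coordinate $x$. The fibre conditions multiply the codimension of any simultaneously-resonant $n$ (via a generic-position hypothesis on the $c_i$), while the Behrend constraint rules out the genuine length-$3$ arithmetic progression structure in the base that would otherwise populate the $5$-fold intersection; a Fourier/counting argument then drives the $5$-fold correlation below $\mu(A)^{100}$.

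The hard part, and the technical heart of \cite{bhk}, is preserving the \emph{exact} pair identity $\mu(A\cap S^n A) = \mu(A)^2$ through every one of these refinements, since each new constraint threatens to introduce Bohr-type corrections to the pair correlation. This is achieved by arranging the auxiliary frequencies in general position relative to $\alpha$ so that the pair correlation remains controlled only by the Kronecker factor and continues to factor exactly, while the $5$-fold correlation genuinely probes the full $2$-step structure and can be made as small as one wishes. Choosing the parameters so that the measure of the final $A$ lies below $\delta$ then yields Theorem \ref{bhkthm}.
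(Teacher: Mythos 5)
For this statement the paper does not give a self-contained proof: it simply quotes \cite[Theorem 1.3]{bhk} and observes that the exact identity \eqref{mox}, though not stated there, is automatic from the form of the construction in \cite[Section 2.3]{bhk}, namely the skew product $S(x,y)=(x+\alpha,\,y+2x+\alpha)$ on $(\bbR/\bbZ)^2$ with a set of the special form $A=(\bbR/\bbZ)\times B$, so that only the fibre coordinate is constrained. Your first two paragraphs are consistent with this: for $A=\bbT\times B$ the pair $(y,\,y+2nx+n^2\alpha)$ is uniform for every $n\neq 0$, which gives \eqref{mox} exactly, and with $B$ an interval the five-fold intersection is supported on the quadratic Bohr set $\{n:\|2n^2\alpha\|\leq 4\eta\}$. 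Up to that point you have correctly reconstructed the mechanism the paper relies on.

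The gap is in your treatment of the resonant $n$, which is where the actual content of \cite{bhk} lies. You propose to add a Behrend-type sparse constraint on the \emph{base} coordinate $x$ (plus auxiliary fibres), hoping that ``general position'' of the frequencies preserves \eqref{mox}. It cannot: if $A=E\times B$ with $0<\mu(E)<1$, then for $n$ in the Bohr set where $\|n\alpha\|$ is tiny one has $1_E(x)1_E(x+n\alpha)\approx 1_E(x)$, while $x\mapsto 2nx$ still equidistributes, so $\mu(A\cap S^nA)\approx \mu(E)\mu(B)^2$, which exceeds $\mu(A)^2=\mu(E)^2\mu(B)^2$ by the factor $1/\mu(E)$. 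The obstruction is the almost periodicity of the base rotation, which correlates $E$ with its own translates along its Bohr set; no choice of auxiliary frequencies removes it, and this is exactly why the cited construction takes $A=(\bbR/\bbZ)\times B$ and why the paper can assert \eqref{mox} as an exact identity (exactness, not approximate independence, is what the trace computation in Theorem \ref{negavk} needs). Several fibres over the same circle would also require care, since all the cocycles are driven by the same $x$ and the pair correlation does not factor across fibres automatically. In \cite{bhk} the suppression of the five-fold intersection is achieved entirely inside the fibre: writing $w=2nx+n^2\alpha$, the five fibre values $u_j=y+jw+j(j-1)n^2\alpha$ lie on a quadratic in $j$ and hence satisfy the linear relations given by vanishing third differences, and $B$ is chosen by the Behrend--Ruzsa construction so that quintuples of $B$ in such a configuration are far rarer than the random count, uniformly in the parameters; no resonance analysis over a quadratic Bohr set in $n$ is involved. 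Finally, your concluding ``Fourier/counting argument'' is left unspecified, so even setting aside the structural issue the bound $\mu(A)^{100}$ is not actually established by the proposal.
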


\begin{proof} This follows from \cite[Theorem 1.3]{bhk} (see also the remark immediately below that theorem).
The property \eqref{mox} is not explicitly stated in that theorem,
but follows from the construction in \cite[Section 2.3]{bhk} (the
system $X$ is a torus $(\bbR/\bbZ)^2$ with the skew shift $S:
(x,y) \mapsto (x+\a,y+2x+\a)$, and the set $A$ has the special
form $A = (\bbR/\bbZ) \times B$ for some set $B$).
\end{proof}

We apply this theorem for some sufficiently small $\delta$ (to be
chosen later) to obtain $X, \mu, S, A$ with the above properties.
We will combine this with the group $G$, the automorphism $T$, and
the elements $e_0,e_1,e_2,e_3,e_4$ arising from Proposition
\ref{apthy2} as follows.

First, we create the product space $L^\infty(X^G, d\mu^G)$, whose
$\s$-algebra is generated up to negligible sets by the tensor
products $\bigotimes_{g \in G} f_g$, where $f_g \in
L^\infty(X,d\mu)$ is equal to $1$ for all but finitely many $g$.
This product has a unitary, trace-preserving action $U$ of $G$,
defined by
$$ U(h) \bigotimes_{g \in G} f_g := \bigotimes_{g \in G} f_{h^{-1} g}.$$
We can therefore create the crossed product $\M := L^\infty(X^G,
d\mu^G) \rtimes_U G$.  Note that if we embed $L^\infty(X,\mu)$
into $L^\infty(X^G,d\mu^G)$ by using the identity component of
$X^G$, we have
\begin{equation}\label{slam}
\bigotimes_{g \in G} f_g = \prod_{g \in G} U(g) f_g
\end{equation}
(note that the $U(g) f_g$ necessarily commute with each other.)

We define a shift $\a$ on $\M$ by requiring that
$$ \a( \bigotimes_{g \in G} f_g ) = \bigotimes_{g \in G} S( f_{T^{-1} g} )$$
and
$$ \a(g) = Tg;$$
one can check that this is indeed a well-defined shift on $\M$.

We claim that $\a$ is ergodic.  Indeed, if $a \in \M$ is of the form
$a = f g$ for some $f \in L^\infty(X^G, d\mu^G)$ and $g \in G$ not
equal to the identity, then as the powers of $T$ have no non-trivial
fixed points, the orbit $T^n g$ escapes to infinity, and the orbit
$\a^n(a)$ converges weakly to zero.  Meanwhile, if $g$ is the
identity, then it is classical that the Bernoulli system $G
\circlearrowright L^\infty(X^G, d\mu^G)$ is ergodic, and so the
ergodic theorem applies to $a$ in this case.  Putting the two facts
together and arguing as for the ergodicity in Theorem~\ref{k4}
yields the ergodicity of $\a$.

Note that $1_A$ lies in $L^\infty(X,d\mu)$, and can thus be
identified with an element of $\M$ by the previous embedding.  We
set
$$ a := \sum_{i=0}^3 1_A \cdot (2 - e_i - e_i^{-1}) \cdot 1_A.$$
Clearly $a$ is non-negative.  Now let $n$ be non-zero, and consider
the expression
\begin{equation}\label{atau}
\tau( a \a^n (a) \a^{2n} (a) \a^{3n} (a) \a^{4n} (a)).
\end{equation}
Expanding out $a$, we obtain a linear combination of terms of the form
$$
\tau( 1_A g_0 1_A 1_{S^n (A)} (T^n g_1) 1_{S^n (A)} 1_{S^{2n} (A)}
(T^{2n} g_2) 1_{S^{2n} (A)} 1_{S^{3n} (A)} (T^{3n} g_3) 1_{S^{3n}
(A)} 1_{S^{4n} (A)} (T^{4n} g_4) 1_{S^{4n} (A)} )$$ where
$$g_0,g_1,g_2,g_3,g_4 \in \{ id, e_0, e_1, e_2, e_3, e_4, e_0^{-1}, e_1^{-1}, e_2^{-1}, e_3^{-1}, e_4^{-1} \}.$$
This trace vanishes unless
\begin{equation}\label{gosh}
 g_0 T^n g_1 T^{2n} g_2 T^{3n} g_3 T^{4n} g_4 = \id.
\end{equation}
By Proposition \ref{apthy2}, we conclude that
$g_0,g_1,g_2,g_3,g_4$ are either all equal to the identity, or are
a permutation of $e_0, e_1, e_2, e_3, e_4$, or are a permutation
of $e_0^{-1}, e_1^{-1}, e_2^{-1}, e_3^{-1}, e_4^{-1}$.  In the
latter two cases, the contribution to \eqref{atau} is either zero
or negative (being negative the trace of the product of several
non-negative elements in a commutative von Neumann algebra).  Here
we are using the fact that $5$ is odd.  Discarding all of these
contributions except the one where $g_{i,0} = e_{i,0}$ (which has
a non-trivial contribution thanks to Proposition \ref{apthy2}), we
conclude that \eqref{atau} is at most
\begin{align*}
&10^{5} \tau( 1_A 1_{S^n (A)} 1_{S^{2n} (A)} 1_{S^{3n} (A)} 1_{S^{4n} (A)} ) \\
\quad & - \tau( 1_A e_{0} 1_A 1_{S^n (A)} e_{1} 1_{S^n (A)}
1_{S^{2n} (A)} e_{2} 1_{S^{2n} (A)} 1_{S^{3n} (A)} e_{3} 1_{S^{3n}
(A)} 1_{S^{4n} (A)} e_{4} 1_{S^{4n} (A)} ).
\end{align*}

By Theorem \ref{bhkthm}, the first expression is at most $10^{5}
\mu(A)^{100}$.  Now consider the second expression.  By
Proposition \ref{apthy2}, we see that the partial products $e_{0}
e_{1} \ldots e_{i}$ for $i=0,1,2,3$ are distinct.  Using
\eqref{slam}, we conclude that the trace here can be computed as
$$ \mu( S^{4n} (A) \cap A ) \mu( A \cap S^n (A) ) \mu( S^n (A) \cap S^{2n} (A) ) \mu( S^{2n} (A) \cap S^{3n} (A))
\mu( S^{3n} (A) \cap S^{4n} (A) ),$$ which by \eqref{mox} is equal
to $\mu(A)^{10}$.  Thus the expression \eqref{atau} is at most
$2^{15} \mu(A)^{100} - \mu(A)^{10}$, which is negative if the
upper bound $\delta$ for $\mu(A)$ is chosen to be sufficiently
small.

This concludes the proof of Theorem \ref{negavk}.

\begin{remark} Given that the counterexample in Theorem \ref{bhkthm} can be extended to any $k \geq 5$,
it seems reasonable to expect that Theorem \ref{5fail} can be
extended to all $k \geq 5$ (not just the odd $k$), though we have
not pursued this issue.  On the other hand, the analogue of
Theorem \ref{bhkthm} fails for $k=4$, as was shown in \cite{bhk}.
Because of this, the $k=4$ case of Theorem \ref{5fail} remains
open; the construction given here does not work, but it is
possible that some other construction would suffice instead.\fin
\end{remark}

\section{Inclusions of finite von Neumann dynamical
systems}\label{sec:FurZim}

In this section we quickly recall some fairly well-known
constructions relating to von Neumann dynamical systems and their
basic properties, culminating in a treatment of Popa's
noncommutative version of the Furstenberg-Zimmer dichotomy
from~\cite{Pop07}.  This material will be needed to establish the structure theorem (Theorem \ref{thm:rel-w-m-centre}).

Let $(\M,\tau)$ be a finite von Neumann algebra.  As noted in
the introduction, we can embed $\M$ into a Hilbert space
$L^2(\tau)$.  In order to distinguish the algebra structure from the
Hilbert space structure\footnote{It is tempting to ignore these
distinctions and identify $\hat \M$ with $\M$.  While this is
normally qutie a harmless identification, we will take some care
here because we will be studying the bi-module action of $\M$ on
$L^2(\tau)$, and keeping track of this action can become
notationally confusing if the algebra elements are identified with
the vectors that they act on.}, we shall refer in this section to
the embedded copy of an element $a \in \M$ of the algebra in
$L^2(\tau)$ as $\hat a$ rather than $a$, thus for instance $\hat \M
= \{ \hat a: a \in \M \}$ is a dense subspace of $L^2(\tau)$.

Clearly, $L^2(\tau)$ has the structure of an $\M$-bimodule, formed
by extending the regular bimodule structure on $\M$ by density; the
left-representation is, of course, the classical
Gel'fand-Naimark-Segal representation associated to $\tau$.  When it
is necessary to denote the copy of $\M$ in $B(L^2(\tau))$ consisting
of the members of $\M$ acting by multiplication on the left
(respectively, right), we will denote this algebra by
$\M_{\rm{left}}$ (respectively, $\M_{\rm{right}}$).

The space $L^2(\tau)$ contains a distinguished vector $\hat{1}$ --
the representative of the multiplicative identity $1$ in $\M$ --
with the property that $a\hat 1 = \hat 1 a = \hat a$ for all $a \in
\M$. This vector will play a prominent role in the rest of this
section.

Now let $(\N, \tau|_\N)$ be a von Neumann subalgebra of $(\M,\tau)$
(with the inherited trace).  Then we can canonically identify
$L^2(\tau|_\N)$ with the closed subspace
$$\overline{\{\hat{b}:\ b \in \N\}} = \overline{\N\hat 1} = \overline{\hat 1\N}$$
of $L^2(\tau)$ in the obvious manner.

We will make use of certain well-known properties of these
constructs, which we merely recall here.  A clear account of all of
them can be found in \cite[Chapters 1,3]{JonSun97}.

First, it is important that there is a simple necessary and
sufficient condition for a vector $\xi \in L^2(\tau)$ to lie in the
dense subspace $\widehat{\M}$: this is so if and only if the linear
operator
\[\widehat{\M} \to L^2(\tau):\hat{x} \mapsto x\xi\]
is bounded for the norm $\|\cdot\|_{L^2(\tau)}$, and so extends by
continuity to a bounded operator $L^2(\tau)\to L^2(\tau)$. The
necessity of this conclusion is clear, and its sufficiency requires
just a little argument using the fact that for a finite von Neumann algebra
$(\M,\tau)$ we have $\M_{\rm{right}} = \M_{\rm{right}}''$ and
$\M_{\rm{left}} = \M_{\rm{left}}''$; see \cite[Theorem 1.2.4]{JonSun97}.

A simple application of this condition now shows that the
orthogonal projection $e_\N:L^2(\tau) \to \overline{\N\hat 1}$
maps the dense subspace $\widehat{\M}$ into $\widehat{\N}$, and so
defines also a linear operator $E_\N:\M \to\N$. Indeed, for $a \in
\M$ we need only to show that the map
\[\widehat{\M} \to L^2(\tau):\hat{x} \mapsto xe_\N(\hat{a})\]
is bounded for the norm $\|\cdot\|_{L^2(\tau)}$.  Since $\N$ is also a von
Neumann algebra and $e_\N(\hat{a}) \in \overline{\N\hat 1}\cong
L^2(\tau|_\N)$, it actually suffices to check this for $x \in \N$.
However, since $\overline{\N\hat 1}$ is an $(\N,\N)$-sub-bimodule,
left multiplication by $x$ commutes with $e_\N$, and so we have
\[\|xe_\N(\hat{a})\|_{L^2(\tau)} = \|e_\N(x\hat{a})\|_{L^2(\tau)} \leq \|\hat{xa}\|_{L^2(\tau)}
\leq \|a\|\|\hat{x}\|_{L^2(\tau)},\]
as required.

The linear operator $E_\N$ is referred to as the \emph{conditional
expectation} of $\M$ onto $\N$ associated to $\tau$, and it has the
following readily-verified properties:

\begin{lem}[Properties of conditional expectation]\label{lem:condexp}
For all $a \in \M$, the operator $E_\N$ satisfies:
\begin{itemize}
\item (Idempotence) $E_\N(E_\N(a)) = E_\N(a)$;
\item (Contractivity) $\|E_\N(a)\| \leq \|a\|$;
\item (Trace-preservation) $\tau|_\N( E_\N(a) ) = \tau(a)$;
\item (Positivity) $E_\N(a^\ast a) \geq 0$ (as a member of
$\N$); and
\item (Relation with $e_\N$)  For all $\xi \in L^2(\tau)$, one has
\[e_\N (a (e_\N(\xi))) = E_\N(a)(e_\N(\xi)) = e_\N(E_\N(a)(\xi)).\]
\end{itemize}
\end{lem}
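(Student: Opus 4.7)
The plan is to derive all five properties from the observation---already exploited in the paragraph preceding the lemma---that $\overline{\N\hat 1} = \overline{\hat 1 \N}$ is an $(\N,\N)$-sub-bimodule of $L^2(\tau)$, so the orthogonal projection $e_\N$ commutes with left multiplication by any $b \in \N$ and, symmetrically, with right multiplication by any $b \in \N$. The excerpt explicitly invokes only the left-commutation; the symmetric right-commutation is proved in exactly the same way from $\overline{\N\hat 1} = \overline{\hat 1 \N}$.

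I would begin with the relation with $e_\N$, since the remaining items all draw on it. The second equality $E_\N(a)\,e_\N(\xi) = e_\N(E_\N(a)\,\xi)$ is immediate from the left-commutation of $e_\N$ with $E_\N(a) \in \N$. For the first equality $e_\N(a\,e_\N(\xi)) = E_\N(a)\,e_\N(\xi)$ I would approximate $e_\N(\xi)$ by vectors of the form $\hat b = b\hat 1$ with $b \in \N$ and invoke the right-commutation: $e_\N(a\hat b) = e_\N(\hat a \cdot b) = e_\N(\hat a)\cdot b = \widehat{E_\N(a)\,b}$, which is $E_\N(a)\hat b$. Passing to the limit gives the asserted identity. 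As a by-product this computation also establishes the right-bimodule identity $E_\N(ab) = E_\N(a)\,b$ for $a\in\M$, $b\in\N$, and combining with its mirror image one obtains the full bimodule rule $E_\N(bac) = b\,E_\N(a)\,c$ for $b,c \in \N$.

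Idempotence is then transparent: since $E_\N(a) \in \N$ one has $\widehat{E_\N(a)} \in \overline{\N\hat 1}$, which is fixed by $e_\N$. For trace-preservation I would use the self-adjointness of $e_\N$ together with $\hat 1 \in \overline{\N\hat 1}$, computing $\tau(a) = \langle\hat 1,\hat a\rangle = \langle e_\N(\hat 1), \hat a\rangle = \langle\hat 1, e_\N(\hat a)\rangle = \langle\hat 1, \widehat{E_\N(a)}\rangle = (\tau|_\N)(E_\N(a))$.

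For contractivity, I would recall that the operator norm of $E_\N(a) \in \N$ coincides with the norm of its action by left multiplication on $L^2(\tau|_\N)$, the GNS representation of $(\N,\tau|_\N)$ being a faithful normal $\ast$-representation of the finite von Neumann algebra $\N$; for any $\xi \in \overline{\N\hat 1}$ the relation with $e_\N$ just proved gives $E_\N(a)\xi = e_\N(a\xi)$, and hence $\|E_\N(a)\xi\|_{L^2(\tau)} \leq \|a\xi\|_{L^2(\tau)} \leq \|a\|\,\|\xi\|_{L^2(\tau)}$, as required. Finally, the bimodule identity yields $b^\ast E_\N(a^\ast a)b = E_\N((ab)^\ast(ab))$, and combining with trace-preservation gives $(\tau|_\N)(b^\ast E_\N(a^\ast a)b) = \tau((ab)^\ast(ab)) = \|\widehat{ab}\|_{L^2(\tau)}^2 \geq 0$ for every $b \in \N$; faithfulness of the GNS representation of $(\N,\tau|_\N)$ then forces $E_\N(a^\ast a) \geq 0$. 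No step presents a genuine obstacle; the only point worth flagging is the systematic use of the right-module structure, which the discussion preceding the lemma invoked only on the left.
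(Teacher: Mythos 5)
The paper offers no proof of this lemma, declaring these "readily-verified properties," so there is no authorial proof to compare against. Your verification is correct and essentially optimal: establishing the $e_\N$-relation first (via right-commutation of $e_\N$ with $\N$ on $\hat b$'s, which you correctly note is symmetric to the left-commutation the paper made explicit) and extracting along the way the bimodule rule $E_\N(bac) = bE_\N(a)c$ makes the remaining four items fall out cleanly. Two small points worth flagging. First, for contractivity the inequality $\|a\xi\|_{L^2(\tau)}\leq\|a\|\,\|\xi\|_{L^2(\tau)}$ uses that the GNS representation of $(\M,\tau)$ is norm-nonincreasing, while the identification of $\|E_\N(a)\|$ with its norm as a multiplication operator on $L^2(\tau|_\N)$ uses faithfulness (hence isometry) of the GNS representation of $(\N,\tau|_\N)$; you invoke the latter, and the former is automatic for any $\ast$-representation of a C$^\ast$-algebra, so nothing is missing, but it is worth being aware that two slightly different facts are in play. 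Second, your positivity argument implicitly uses that a bounded operator $x$ on a complex Hilbert space with $\langle\xi,x\xi\rangle\geq 0$ for all $\xi$ is automatically self-adjoint and positive; this is what turns the scalar inequality $\tau|_\N(b^\ast E_\N(a^\ast a)b)\geq 0$ for all $b\in\N$, together with density of $\widehat\N$ in $L^2(\tau|_\N)$, into positivity of $E_\N(a^\ast a)$ as an operator, after which faithfulness of the GNS representation transports positivity back to $\N$. Both points are routine, and your write-up is a complete and correct justification of what the paper leaves as an exercise.
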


\begin{example} If $\M = L^\infty(X,{\mathcal X},\mu)$ for some probability measure
$\mu$ with the usual trace, and $(Y,{\mathcal Y},\nu)$ is a factor
space of $(X,{\mathcal X},\mu)$ with a measurable factor map $\pi: X
\to Y$ that pushes $\mu$ forward to $\nu$, then
$L^\infty(Y,{\mathcal Y},\nu)$ can be identified with a subalgebra
of $\M$, and the conditional expectation map becomes its classical
counterpart from probability theory. \fin
\end{example}

Together with $\M$, the orthogonal projection $e_\N$ now generates
in $B(L^2(\tau))$ a larger von Neumann algebra $\langle
\M,e_\N\rangle \supseteq \M$. In general $\langle \M,e_\N\rangle$ is
no longer a finite von Neumann algebra, but it does contain the
dense $\ast$-subalgebra $\A := \lin(\M \cup \{x e_\N y:\ x,y \in
\M\})$ on which we define the \emph{lifted trace} $\bar{\tau}:\A \to
\bbC$ by specifying $\bar{\tau}(xe_\N y) = \tau(xy)$.  By choosing
an orthonormal basis for $L^2(\tau)$ relative to the right action of
$\N$, and consequently realizing $\langle \M,e_\N\rangle$ as an
amplification of $\N$, this linear map is seen to be non-negative
and faithful, and hence defines a semifinite normal faithful
$[0,+\infty]$-valued trace (which we still denote by $\bar \tau$) on
the cone $(\langle\M,e_\N\rangle)^+$ of non-negative (and
self-adjoint) elements of $\langle\M,e_\N\rangle$. This witnesses
that the algebra $\langle\M,e_\N\rangle$ is semifinite (that is, any
positive element of it may be approximated from below by
finite-$\bar{\tau}$ positive elements).  We will not spell out these
standard manipulations here (see, for instance, \cite[Section
1.5]{Pop07}), but we will invoke a notion of orthonormal basis for
right-$\N$-submodules of $L^2(\tau)$ shortly.

\begin{remark} In case $\N\subset \M$ is a finite-index
inclusion of finite II$_1$ factors, then we find that $\langle
\M,e_\N\rangle$ is also a finite II$_1$ factor.  Writing $\M_1$ for
this factor, it follows that the above construction may be repeated
with the inclusion $\M\into \M_1$ in place of $\N\into \M$, and
indeed that it may be iterated to form an infinite tower of II$_1$
factors
\[\N\subset \M\subset \M_1 \subset \M_2 \subset \ldots.\]
This is Jones' \emph{basic construction}; it underlies his famous
work~\cite{Jon83} on the possible values of the index $[\N:\M]$, and
also several more recent developments.  Once again we refer the
reader to~\cite{JonSun97} for a thorough account of its importance,
and numerous further references. However, since the construction of
this whole infinite tower is special to the case of II$_1$ factors,
we will not focus on it further here. \fin
\end{remark}

It is easy to check that the right action of any $n \in \N$ commutes
with any $x e_\N y$, and hence with any member of $\langle
\M,e_\N\rangle$, and in fact it can be shown that $\langle
\M,e_\N\rangle' = \N_{\rm{right}}$ and hence that $\N_{\rm{right}}'
= \langle \M,e_\N\rangle'' = \langle \M,e_\N\rangle$: firstly, if $A
\in B(L^2(\tau))$ commutes with every $b \in \M_{\rm{left}}$ then it
must be the right-action of some $a \in \M$, and now if also
$e_\N(\hat 1 a) = \hat 1 a$ then we must in fact have $a \in \N$
(see Proposition 3.1.2 in~\cite{JonSun97}).  Let us record the
following immediate but important consequence of this for our later
work:

\begin{lem}\label{lem:proj-onto-module}
If $V \leq L^2(\tau)$ is a closed right-$\N$-submodule, then the
orthogonal projection $P_V:L^2(\tau) \to V$ is a member of $\langle
\M,e_\N\rangle$. \qed
\end{lem}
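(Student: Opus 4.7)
The plan is to leverage the equality $\langle \M, e_\N\rangle = \N_{\rm{right}}'$ recalled in the paragraph just before the lemma. Since $\langle \M, e_\N\rangle$ is precisely the commutant of the right action of $\N$ on $L^2(\tau)$, it suffices to verify that the orthogonal projection $P_V$ commutes with right multiplication by every $n \in \N$.

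First I would observe that right multiplication by $n$ maps $V$ into $V$ by the hypothesis that $V$ is a right-$\N$-submodule. The only nontrivial step is to show that right multiplication by $n$ also maps $V^\perp$ into $V^\perp$; once both $V$ and $V^\perp$ are right-$\N$-invariant, commutation of $P_V$ with the right action of $n$ is immediate from the decomposition $\xi = P_V\xi + (\mathrm{id} - P_V)\xi$.

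To see that $V^\perp$ is a right-$\N$-submodule, I would use the fact that the right action of $\N$ on $L^2(\tau)$ has a well-behaved adjoint: for $x,y \in \M$ and $n \in \N$,
\[\langle \hat{x}\,n,\,\hat{y}\rangle = \tau((xn)^\ast y) = \tau(n^\ast x^\ast y) = \tau(x^\ast y n^\ast) = \langle \hat{x},\,\hat{y}\,n^\ast\rangle,\]
so by density the operator of right multiplication by $n$ on $L^2(\tau)$ has adjoint given by right multiplication by $n^\ast$. Since $\N$ is $\ast$-closed, for any $\eta \in V^\perp$, $v \in V$ and $n \in \N$ we obtain
\[\langle \eta n,\,v\rangle = \langle \eta,\,v\,n^\ast\rangle = 0,\]
because $v\,n^\ast \in V$. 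Hence $\eta n \in V^\perp$, as desired.

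Combining these two observations, $P_V$ commutes with the right action of every $n \in \N$, so $P_V \in \N_{\rm{right}}' = \langle \M, e_\N\rangle$. There is no real obstacle in this argument; its entire content lies in the identification of $\langle \M, e_\N\rangle$ with $\N_{\rm{right}}'$ (which is quoted from \cite{JonSun97}) and in the self-adjointness of $\N$, which ensures that orthogonal complements of right-$\N$-submodules remain right-$\N$-submodules.
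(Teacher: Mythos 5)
Your proof is correct and fills in exactly the routine verification the paper is implicitly appealing to: the paper records the identity $\langle \M,e_\N\rangle = \N_{\rm{right}}'$ in the preceding paragraph and then states the lemma with a bare $\qed$ as an ``immediate consequence,'' and your argument (showing both $V$ and $V^\perp$ are right-$\N$-invariant via the $*$-closedness of $\N$, hence $P_V$ commutes with $\N_{\rm{right}}$) is precisely the standard way to make that immediacy explicit. Same approach, just spelled out.
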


Using $\bar{\tau}$ we can also define an alternative completion of
$\A = \lin\,\M e_\N \M$ for each $p \in [1,\infty)$ by setting
$\|A\|_{p,\bar{\tau}} := \sqrt[p]{\bar{\tau}((A^\ast A)^{p/2})}$ for
$A \in \A$ (where as usual the power $(A^\ast A)^{p/2}$ is defined
using spectral theory for the selfadjoint operator $A^\ast A$, and
the non-negativity of $\bar{\tau}$ is used to show that
$\bar{\tau}((A^\ast A)^{p/2})$ is finite even when $p/2$ is not an
integer). We denote this completion by $L^p(\bar{\tau})$; it is a
Hilbert space when $p=2$. In general elements of $L^p(\bar{\tau})$
do not correspond to elements of $\langle \M,e_\N\rangle$, but they
do give possibly unbounded but closable operators that are weakly
approximable by members of this algebra, which are therefore
affiliated to $\N_{\rm{right}}$.  If $A \in L^p(\bar{\tau})$ is such
an operator that is self-adjoint, then it admits a spectral
decomposition
\[A = \int_\bbR s\,P(\d s)\]
for some spectral measure $P$ on $\bbR$ taking values in the
projections of $\langle \M,e_\N\rangle \cap L^1(\bar{\tau})$, of
possibly unbounded support in $\bbR$, but for which
\[\|A\|^p_{p,\bar{\tau}} = \int_\bbR |s|^p\,\bar{\tau}P(\d s) < \infty.\]

If $V$ is as in Lemma~\ref{lem:proj-onto-module} then we may write
that $P_V$ has \emph{finite lifted trace} if it corresponds to a
member of $\langle \M,e_\N\rangle \cap L^1(\bar{\tau})$.

Now let us introduce some dynamics.  Suppose that $\a$ is a shift
on $\M$ which restricts to a shift on $\N$. Then, as mentioned in
the introduction, $\a$ induces a unitary operator acting on
$L^2(\tau)$, which we shall distinguish from $\a$ by writing it as
$U_\a$; thus for instance
$$U_\a \hat a = U_\a(a\hat 1) = \a(a)\hat 1 = \widehat{\a(a)}$$
for all $a \in \M$.  It is clear that $\overline{\N\hat 1}$ is an
invariant subspace for $U_\a$, so that $U_\a$ commutes with
$e_\N$. Also, conjugation by $U_\a$ agrees with the action $\a$ on
$\M$, thus
$$ U_\a a U_\a^{-1} \xi = \a(a) \xi$$
for all $a \in \M$ and $\xi \in L^2(\tau)$.  Thus, conjugation by
$U_\a$ extends the action of $\a$ to $\langle \M,e_\N\rangle$.

The following special class of one-sided submodules of $L^2(\tau)$
appears here almost exactly as in the commutative setting.

\begin{dfn}[Finite-rank modules]
A left- (respectively, right-) $\N$-submodule $V$ of $L^2(\tau)$
has \emph{finite rank} if there are some $\xi_1$, $\xi_2$, \ldots,
$\xi_r \in V$ such that $V = \overline{\sum_{i=1}^r \N\xi_i}$
(respectively, $V = \overline{\sum_{i=1}^r \xi_i\N}$), and the
numerical value of its \emph{rank} is the least $r \geq 1$ for
which this is possible.
\end{dfn}

\begin{prop}[Relativized Gram-Schmidt procedure]
If $V \leq L^2(\tau)$ is a $U_\a$-invariant right-$\N$-submodule of
finite rank $r$ then there are $\xi_1$, $\xi_2$, \ldots, $\xi_r \in
L^2(\tau)$ such that
\begin{itemize}
\item the subspaces $\overline{\xi_i\N} \leq L^2(\tau)$ are pairwise
orthogonal; and
\item $V = \sum_{i=1}^r\overline{\xi_i\N}$.
\end{itemize}
\end{prop}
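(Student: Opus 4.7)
The plan is to mimic the classical Gram--Schmidt procedure, with one-dimensional linear subspaces replaced by the cyclic right-$\N$-submodules $\overline{\xi_i\N}$, leveraging the fact (recorded in Lemma~\ref{lem:proj-onto-module}) that the orthogonal projection onto any closed right-$\N$-submodule lies in $\langle\M,e_\N\rangle=\N_{\rm{right}}'$. This identity means that any such projection $P_W$ commutes with right multiplication by elements of $\N$, and consequently that $W^\perp$ is again a (closed) right-$\N$-submodule. One may therefore orthogonalize generators one at a time without ever leaving the category of right-$\N$-submodules.

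Concretely, I would first choose generators $\eta_1,\ldots,\eta_r\in V$ with $V=\overline{\sum_{i=1}^r \eta_i\N}$, afforded by the finite-rank hypothesis. I then construct $\xi_1,\ldots,\xi_r$ inductively: set $V_0:=\{0\}$, and for $k\geq 1$ set
\[\xi_k:=\eta_k-P_{V_{k-1}}(\eta_k),\qquad V_k:=V_{k-1}+\overline{\xi_k\N},\]
where $P_{V_{k-1}}$ is the orthogonal projection of $L^2(\tau)$ onto the closed right-$\N$-submodule $V_{k-1}$. The induction carries along both the closedness of $V_{k-1}$ as a right-$\N$-submodule (so that Lemma~\ref{lem:proj-onto-module} continues to apply) and the pairwise orthogonality of the submodules $\overline{\xi_1\N},\ldots,\overline{\xi_{k-1}\N}$.

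At each step I need to check two things. First, that $\overline{\xi_k\N}\perp V_{k-1}$: since $P_{V_{k-1}}$ commutes with right multiplication by $\N$, one has $P_{V_{k-1}}(\xi_k n)=P_{V_{k-1}}(\xi_k)n=0$ for every $n\in\N$, so $\xi_k\N\subseteq V_{k-1}^\perp$ and hence $\overline{\xi_k\N}\subseteq V_{k-1}^\perp$; in particular $V_k=V_{k-1}\oplus\overline{\xi_k\N}$ is itself a closed right-$\N$-submodule. Second, that $V_k$ has the same closure as $V_{k-1}+\overline{\eta_k\N}$: on the one hand $\xi_k=\eta_k-P_{V_{k-1}}(\eta_k)\in V_{k-1}+\overline{\eta_k\N}$, and on the other $\eta_k=\xi_k+P_{V_{k-1}}(\eta_k)\in V_k$, so each side is contained in the other. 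Iterating up to $k=r$ then yields $V_r=\overline{\sum_{i=1}^r\eta_i\N}=V$, establishing both clauses of the proposition.

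The only mildly delicate point --- really the crux of the argument --- is the appeal to Lemma~\ref{lem:proj-onto-module}, or rather to the stronger fact that $\langle\M,e_\N\rangle$ is the commutant of $\N_{\rm{right}}$, which is what licenses the commutation of $P_{V_{k-1}}$ with right multiplication by $\N$. Everything else is essentially bookkeeping with orthogonal sums in a Hilbert space. The $U_\a$-invariance hypothesis on $V$ plays no role in the argument and seems to be included only for its relevance in the applications that follow, where one wishes to apply this orthogonal decomposition to $U_\a$-invariant finite-rank modules arising from the relatively isometric part of $\M$ over $\N$.
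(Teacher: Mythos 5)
Your argument is correct and is essentially the paper's proof: both run the same relativized Gram--Schmidt induction, keyed to the fact that the orthogonal projection onto a closed right-$\N$-submodule commutes with the right $\N$-action (so orthogonal complements remain right-$\N$-submodules), with the only cosmetic difference being that the paper peels off the last cyclic summand $\overline{\xi_{r+1}\N}$ in a decreasing induction on the rank while you build $V_1\subset\cdots\subset V_r$ from the bottom up via the projections $P_{V_{k-1}}$. You are also right that the $U_\a$-invariance hypothesis plays no role in this particular proof.
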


\begin{proof} This uses a relativized Gram-Schmidt argument
much as in the commutative setting (see e.g. \cite[Lemma
9.4]{Gla03}).  We proceed by induction on $r$. If $V$ has rank $1$
then the result is immediate from the definition, so let us
suppose that it has rank $r + 1$ for some $r \geq 1$. Then given a
representation
\[V = \overline{\sum_{i=1}^{r+1} \xi_i^\circ \N},\]
we know that any member of $V$ may be approximated in
$\|\cdot\|_{L^2(\tau)}$ by expressions of the form $\xi^\circ_1 n_1 +
\cdots + \xi^\circ_{r+1} n_{r+1}$ for $n_1$, $n_2$, \ldots, $n_{r+1}
\in \N$. This, in turn, may be re-written as
\[(\xi^\perp_1 n_1 + \cdots + \xi^\perp_r n_r) + \big((\xi^\circ_1 - \xi_1^\perp) n_1 + \cdots +
(\xi_r^\circ - \xi^\perp_r) n_r\big) + \xi^\circ_{r+1} n_{r+1}\]
where for each $i \leq r$ we have decomposed $\xi_i^\circ$ into its
component $\xi_i^\perp$ orthogonal to $\overline{\xi_{r+1}\N}$ and
the remainder $\xi^\circ_i - \xi^\perp_i \in
\overline{\xi_{r+1}\N}$.  Since $\overline{\xi_{r+1}\N}$ is a
right-$\N$-submodule, it follows that the second and third inner
sums in the above decomposition both lie in
$\overline{\xi_{r+1}\N}$, and now since
$\overline{\xi_{r+1}\N}^\perp$ is also a right-$\N$-submodule, we
have in fact shown that
\[V = V_1 + \overline{\xi_{r+1}\N}\]
where $V_1 := \overline{\sum_{i=1}^r \xi^\perp_i\N}$ is a rank-$r$
right-$\N$-submodule that is orthogonal to $\overline{\xi_{r+1}\N}$.
Applying the inductive hypothesis to $V_1$ now completes the proof.
\end{proof}

The following definition is also drawn from the commutative world.
This notion has previously been extended to the setting of
non-commutative algebras by Popa in~\cite{Pop07}, who discusses
several other aspects and equivalent conditions in that paper.
(See also \cite{NicStrZsi03}, \cite{Duv09}, \cite{BeyDuvStr07} for
an analysis of the absolute analogue of weak mixing, in which the
subalgebra $\N$ is the trivial algebra $\bbC 1$.)

\begin{dfn}[Relative weak mixing]
If $(\M,\tau,\a)$ is a von Neumann dynamical system and $\N
\subset \M$ is an $\a$-invariant von Neumann subalgebra, then $\a$ is
\emph{weakly mixing relative to $\N$} if for any $a \in \M\cap
\N^\perp$ we have
\[\frac{1}{N}\sum_{n=1}^N\|E_\N(a^\ast \a^n(a))\|^2_\tau \to 0\quad\quad\hbox{as}\ N\to\infty.\]
\end{dfn}

The basic inverse theorem that we need, extending the idea of
Furstenberg and Zimmer to the non-commutative context, is contained
in the following proposition, which essentially re-proves part of
\cite[Lemma 2.10]{Pop07}:

\begin{prop}[Lack of weak mixing implies finite trace submodule]\label{prop:nonrelind-to-invar}
If $\a$ is not weakly mixing relative to $\N$ then there is a
$U_\a$-invariant right-$\N$-submodule $V \leq L^2(\tau)\ominus
\overline{\N\hat 1}$ such that $P_V$ has finite lifted trace.
\end{prop}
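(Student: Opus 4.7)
The plan is to mirror the classical Furstenberg--Zimmer argument: use the failure of relative weak mixing to manufacture a non-zero, conjugation-invariant, non-negative element $A_\infty$ of $L^2(\bar\tau)$ affiliated to $\langle\M,e_\N\rangle$, then read the desired submodule off its spectral decomposition. Unpacking the hypothesis, I first extract $a \in \M\cap\N^\perp$ and $\delta > 0$ with
\[\limsup_{N\to\infty}\frac{1}{N}\sum_{n=1}^N \|E_\N(a^\ast \a^n(a))\|_\tau^2 \geq \delta.\]

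Next I form $T_0 := a e_\N a^\ast \in \langle\M,e_\N\rangle$; using $e_\N y e_\N = E_\N(y)e_\N$ one checks $T_0 \in L^1(\bar\tau)\cap L^2(\bar\tau)$, with $\|T_0\|^2_{L^2(\bar\tau)} = \tau(a E_\N(a^\ast a)a^\ast)<\infty$. Since $\a(\N) = \N$ forces $U_\a e_\N U_\a^{-1} = e_\N$, conjugation by $U_\a$ preserves $\langle\M,e_\N\rangle$ and $\bar\tau$, so extends to a unitary on $L^2(\bar\tau)$. Applying the mean ergodic theorem to this unitary at the vector $T_0$ yields \emph{norm} convergence of the Ces\`aro averages
\[A_N := \frac{1}{N}\sum_{n=1}^N U_\a^n T_0 U_\a^{-n} = \frac{1}{N}\sum_{n=1}^N \a^n(a) e_\N \a^n(a)^\ast\]
to a conjugation-invariant limit $A_\infty \in L^2(\bar\tau)$.

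To see $A_\infty$ is non-zero, a direct computation using $e_\N y e_\N = E_\N(y)e_\N$, $\bar\tau(xe_\N) = \tau(x)$, cyclicity of $\tau$, and $\a$-invariance of $E_\N$ gives
\[\bar\tau(A_N^2) = \frac{1}{N^2}\sum_{n,m=1}^N \|E_\N(\a^n(a^\ast)\a^m(a))\|_\tau^2 = \sum_{|k|<N}\frac{N-|k|}{N^2}\|E_\N(a^\ast \a^k(a))\|_\tau^2.\]
The Fej\'er-type sum on the right is $\geq \delta/2$ along a subsequence by the choice of $a$, and norm convergence of $A_N$ then forces $\|A_\infty\|^2_{L^2(\bar\tau)} \geq \delta/2$. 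Moreover $a \in \N^\perp$ gives $E_\N(\a^n(a^\ast)) = \a^n(E_\N(a^\ast)) = 0$, so $A_N\hat 1 = 0$; combined with the right-$\N$-linearity of $A_N \in \langle\M,e_\N\rangle = \N_{\rm{right}}'$ this means $A_N$ vanishes on all of $\overline{\N\hat 1}$. Passing to the limit, $\overline{\N\hat 1}\subseteq\ker A_\infty$, and by self-adjointness $\overline{\mathrm{range}(A_\infty)} \subseteq L^2(\tau)\ominus\overline{\N\hat 1}$.

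Finally, the non-negative self-adjoint element $A_\infty \in L^2(\bar\tau)$ affiliated to $\langle\M,e_\N\rangle$ admits a spectral resolution $A_\infty = \int_\bbR s\,P(\d s)$ with projections $P(\cdot) \in \langle\M,e_\N\rangle$. Chebyshev gives $\bar\tau(P([\epsilon,\infty))) \leq \epsilon^{-2}\|A_\infty\|^2_{L^2(\bar\tau)} < \infty$ for every $\epsilon > 0$. Choosing $\epsilon$ small enough that $P([\epsilon,\infty)) \neq 0$, set $V := P([\epsilon,\infty))L^2(\tau)$: this is a closed right-$\N$-submodule by Lemma \ref{lem:proj-onto-module}, is $U_\a$-invariant because $A_\infty$ is, sits inside $\overline{\mathrm{range}(A_\infty)} \subseteq L^2(\tau)\ominus\overline{\N\hat 1}$, and has $P_V$ of finite lifted trace. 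The most delicate point is ensuring that the unbounded lifted trace $\bar\tau$ still supports the mean ergodic theorem in $L^2(\bar\tau)$-norm applied to $T_0$, so that the lower bound on $\|A_N\|_{L^2(\bar\tau)}$ genuinely transfers to $A_\infty$; granted that, the spectral extraction of $V$ is routine.
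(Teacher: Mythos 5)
Your proof is correct and follows essentially the same route as the paper: form $b = a e_\N a^\ast$, run the mean ergodic theorem for $U_\a$-conjugation in $L^2(\bar\tau)$, show the limit is non-zero, and extract the submodule from a spectral projection of the limit bounded away from $0$. The one place you could tighten the exposition is the non-vanishing step: the paper instead tests against $b$ itself, noting that $\bar\tau(b A_N) = \frac{1}{N}\sum_{n=1}^N \|E_\N(a^\ast\a^n(a))\|^2_\tau$ converges to $\langle b, A_\infty\rangle_{\bar\tau}$ and is the Ces\`aro mean directly, which avoids the (true but unjustified in your write-up) claim that the F\'ejer means of a bounded non-negative sequence have limsup comparable to the Ces\`aro limsup.
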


\begin{proof} Suppose that $a\in \M\cap \N^\perp$ is such that
\[\frac{1}{N}\sum_{n=1}^N \|E_\N(a^\ast \a^n(a))\|^2_\tau \not\to 0.\]

Define $b := ae_\N a^\ast \in \langle \M,e_\N\rangle$, and now
observe (using the cyclic permutability of $\bar{\tau}$ and the
identity $e_\N m e_\N \equiv E_\N(m)e_\N$) that for any $n \in \bbN$
we have
\begin{multline*}
\bar{\tau}\big(b (U_\a^n b U_\a^{-n})\big) = \bar{\tau}(a e_\N a^\ast
U_\a^n (a e_\N a^\ast) U_\a^{-n}) = \bar{\tau}(a e_\N a^\ast \a^n(a) e_\N
\a^n(a)^\ast)\\
 = \bar{\tau}\big(E_\N(a^\ast\a^n(a)) e_\N \a^n(a)^\ast a\big) =
\|E_\N(a^\ast \a^n(a))\|^2_\tau.
\end{multline*}
Averaging in $n$ it follows that
\[\bar{\tau}\Big(b \frac{1}{N}\sum_{n=1}^N \a^n(b)\Big) \to \langle b,b_1\rangle_{\bar{\tau}} \neq 0\]
where $b_1$ is the limit of the ergodic averages
$\frac{1}{N}\sum_{n=1}^N \a^n(b)$ in the Hilbertian completion
$L^2(\bar{\tau})$, which is therefore invariant under the further
extension of the unitary operator $U_\a$ to this Hilbert space.

This new element $b_1$ need not, in general, correspond to a member
of $\langle \M,e_\N\rangle$ (it is easily seen to be so in the
commutative setting, but for special reasons); however, as a
$\|\cdot\|_{2,\bar{\tau}}$-limit of members of $\langle
\M,e_\N\rangle = \N_{\rm{right}}'$ it can always be identified with
a closed operator on $L^2(\tau)$ that is affiliated with the right
action of the algebra $\N$, and as such it admits a spectral
decomposition
\[b_1 = \int_0^\infty s\,P(\d s)\]
for some resolution of the identity $P$ on $[0,\infty)$ whose
contributing spectral projections lie in $\langle \M,e_\N\rangle$,
and for which
\[\int_0^\infty s^2\bar{\tau}(P(\d s)) = \|b_1\|^2_{2,\bar{\tau}} < \infty.\]
Hence $\bar{\tau}P(I) < \infty$ for any Borel subset $I \subseteq
(0,\infty)$ bounded away from $0$. Now choosing any such subset $I$
for which $P(I) \neq 0$ gives an orthogonal projection $P(I) \in
\langle M,e_\N\rangle$ of finite lifted trace that is
$U_\a$-invariant, commutes with the right-$\N$-action because it
lies in $\langle \M,e_\N\rangle$, and moreover has image orthogonal
to $\overline{\hat 1\N}$ because we initially chose $b$ to lie in the
orthogonal complement of this subspace.
\end{proof}

\begin{remark} The above implication can in fact be reversed,
and these conditions shown to be equivalent to a number of others;
see \cite[Lemma 2.10]{Pop07} for a more complete picture.
\fin
\end{remark}

In the next section we will push the above results a little further
under the additional assumption that the subalgebra $\N$ is central,
leading to the proof of Theorem~\ref{thm:rel-w-m-centre}.

\section{The case of asymptotically abelian systems}

We now specialize to the case of an asymptotically abelian system,
with the crucial additional assumption that the subalgebra $\N$ is
\emph{central}.

\begin{lem}\label{lem:concrete-repn-of-module}
Suppose that $(\M,\tau,\a)$ is a von Neumann dynamical system, $\N \subset \M$ is an $\a$-invariant
central von Neumann subalgebra and $V \leq L^2(\tau)$ is a $U_\a$-invariant
right-$\N$-submodule of finite lifted trace.  Then for any $\eps
> 0$ there is a further $U_\a$-invariant right-$\N$-submodule $V_1 \leq
V$ such that
\begin{itemize}
\item $\bar{\tau}(P_V - P_{V_1}) < \eps$;
\item $V_1$ has finite rank, say $r \geq 1$;
\item there are an orthogonal right-$\N$-basis
$\xi_1$, $\xi_2$, \ldots, $\xi_r$ and a unitary matrix of unitary
operators $U = (u_{ji})_{1 \leq i,j \leq r} \in \cal{U}_{r\times
r}(\N)$ such that
\[U_\a(\xi_i) = \sum_{j=1}^r \xi_j u_{ji}\quad\quad \forall i = 1,2,\ldots,r.\]
\end{itemize}
We refer to $U$ as the \emph{cocycle representing the action of
$U_\a$ on the basis elements $\xi_i$}.
\end{lem}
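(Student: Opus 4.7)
The plan is to use the centrality hypothesis to realize $\N$ as the algebra $L^\infty(Y,\nu)$ of a standard measure space $(Y,\nu)$ and to exploit the resulting direct integral decomposition $L^2(\tau)=\int^\oplus_Y \frK_y\,d\nu(y)$, in which the left- and right-actions of $\N$ coincide (by centrality) and act fiberwise by scalar multiplication. Any closed right-$\N$-submodule $V$ of $L^2(\tau)$ then corresponds to a measurable field $(V_y)_{y\in Y}$ of closed subspaces with $\bar{\tau}(P_V)=\int_Y \dim V_y\,d\nu(y)$ up to normalization, the shift $\a|_\N$ is implemented by a $\nu$-preserving transformation $T:Y\to Y$, and $U_\a$ takes the covariant form $(U_\a\xi)(y)=W_y\xi(T^{-1}y)$ for a measurable field of fiberwise unitaries $W_y:\frK_{T^{-1}y}\to\frK_y$.

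Because $V$ is $U_\a$-invariant, $W_y$ maps $V_{T^{-1}y}$ onto $V_y$ for a.e.\ $y$, so the dimension function $y\mapsto \dim V_y$ is $T$-invariant and each level set $Y_N:=\{y:\dim V_y\leq N\}$ is $T$-invariant. Consequently the submodule $V^{(N)}:=\int^\oplus_{Y_N}V_y\,d\nu(y)$ is $U_\a$-invariant of finite rank (at most $N$), and the finiteness of $\bar{\tau}(P_V)$ combined with dominated convergence gives $\bar{\tau}(P_V-P_{V^{(N)}})=\int_{Y\setminus Y_N}\dim V_y\,d\nu\to 0$ as $N\to\infty$. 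I would choose $N$ large enough to make this quantity less than $\eps$, set $V_1:=V^{(N)}$, and let $r\geq 1$ denote its resulting rank.

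To produce the basis I would apply the relativized Gram-Schmidt proposition already established to $V_1$ to obtain vectors $\xi_1,\ldots,\xi_r\in V_1$ whose cyclic submodules $\overline{\xi_i\N}$ are pairwise orthogonal and together span $V_1$; spectral calculus on the positive $\N$-valued self-inner products $\langle\xi_i,\xi_i\rangle_\N$ then lets me rescale so that $\langle\xi_i,\xi_j\rangle_\N=\delta_{ij}p_i$ for some projections $p_i\in\N$. Since $V_1$ is $U_\a$-invariant, each $U_\a\xi_i$ lies in $\bigoplus_j \overline{\xi_j\N}$, so the formula $u_{ji}:=\langle\xi_j,U_\a\xi_i\rangle_\N$ defines elements of $\N$ satisfying $U_\a\xi_i=\sum_j\xi_j u_{ji}$; the unitarity of $U_\a$ together with the orthogonality relations then forces $(u_{ji})$ to be unitary in $M_r(\N)$.

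The main technical obstacle lies in this last step: one must verify that the $u_{ji}$ genuinely belong to $\N$ (rather than merely to the algebra of operators affiliated with it), and that the support projections $p_i$ fit together so that $(u_{ji})$ is honestly unitary in $M_r(\N)$ rather than a partial isometry relative to a strictly smaller central projection. Handling this cleanly amounts to choosing, consistently across each $T$-orbit in $Y$, a measurable orthonormal frame of each fiber $V_y$ (zero-padded where $\dim V_y<r$) whose image under $W_y$ is expanded in the frame over $Ty$; the matrix coefficients then furnish the $u_{ji}$, and the cocycle identity $U^{\ast}U=UU^{\ast}=1$ can be read off uniformly in $y$.
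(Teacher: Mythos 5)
Your proof follows essentially the same route as the paper: both exploit the centrality of $\N$ to represent it as $L^\infty(Y,\nu)$ acting fiberwise in a direct integral decomposition of $L^2(\tau)$, observe that the fiber dimension function is invariant under the $\nu$-preserving transformation induced by $\a|_\N$, truncate at a dimension level to obtain a $U_\a$-invariant finite-rank submodule of nearly full lifted trace via the integrability of $y\mapsto \dim V_y$, and then assemble the cocycle from a measurable fibrewise-orthonormal frame. The one place your last sentence glosses over a genuine (if small) point -- and the paper's own prose is equally brisk here -- is that with a zero-padded frame the literal matrix coefficients $u_{ji}(y)=\langle f_j(y), W_y f_i(T^{-1}y)\rangle$ vanish whenever $i$ or $j$ exceeds $\dim V_y$, producing a partial isometry rather than a unitary; on that complementary block one must fill in the identity, which is legitimate because those entries are unconstrained by the defining relation $U_\a\xi_i=\sum_j\xi_j u_{ji}$, but it does not come for free from the frame expansion alone.
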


\begin{proof} We will prove this invoking the picture of the
representation of $\N$ on $L^2(\tau)$ as a direct integral coming
from spectral theory.  By the classical theory of direct integrals
(see, for instance, \cite[Chapter 14]{KadRin97}), we can select
\begin{itemize}
\item a standard Borel probability space $(Y,\nu)$;
\item a Borel partition $Y = \bigcup_{n\geq 1} Y_n \cup Y_\infty$;
\item a collection of Hilbert spaces $\frH_n$ for $n\in
\{1,2,\ldots,\infty\}$ with $\dim(\frH_n) = n$; and
\item a unitary equivalence
\[\Phi:L^2(\tau) \to \frH := \int_Y^\oplus \frH_y\,\nu(\d y),\]
where we define $\frH_y$ to be $\frH_n$ when $y \in Y_n$,
\end{itemize}
such that $\N$ (acting on the right or left, since these agree for a
central subalgebra of $\M$) is identified with the algebra of
functions $L^\infty(\nu)$ acting by pointwise multiplication.
Explicitly, if we denote elements of $\frH$ as measurable sections
$v:Y\to \coprod_{y\in Y}\frH_y$, then $f \in L^\infty(\nu)$ acts on
$\frH$ by
\[M_f(v)(y) := f(y)v(y).\]
Moreover, in order to accommodate $\Phi(\overline{\N\hat 1})$ we
select a measurable section $v_0\in \frH$ with $\|v_0(y)\|_{\frH_y}
\equiv 1$, and now $\N\hat 1$ is identified with
\[\{y\mapsto f(y)v_0(y):\ f\in L^\infty(\mu)\},\]
so that the orthogonal projection $\Phi e_\N\Phi^{-1}$ acts by
\[\Phi e_\N\Phi^{-1}(v)(y) := \langle v(y),v_0(y)\rangle_{\frH_y}\cdot v_0(y).\]

The larger algebra $\M_{\rm{right}}$ is identified under $\Phi$ with
a direct integral
\[\int_Y^{\oplus} \M_y\,\nu(\d y),\]
so that elements of $\Phi(\M)$ are expressed as measurable sections
$T:Y\to \coprod_{y\in Y}B(\frH_y)$ acting by
\[Tv(y) := T(y)(v(y))\]
and such that $T(y) \in \M_y$ $\nu$-almost surely, where $(\M_y)_{y
\in Y}$ is a measurable field of finite von Neumann subalgebras of
$B(\frH_y)$ for each of which the state \[\M_y\to \bbC:T \mapsto
\langle v_0(y),T(v_0(y))\rangle_{\frH_y}\] is a faithful finite
trace; overall we have
\[\tau(a) = \langle \hat 1,a\hat 1\rangle = \int_Y \langle v_0(y),\Phi(a)(y)(v_0(y))\rangle_{\frH_y}\,\nu(\d y)\]
for $a\in \M$, and so in particular if $n\in \N$ then $\Phi(n) \in
L^\infty(\mu)$ and $\tau(n) = \int \Phi(n)\,\d\nu$.

Given these data, for $a,b\in \M$ we can compute that
\[\Phi(ae_\N b)\Phi^{-1}v(y) = \langle\Phi(b)(y)(v(y)),v_0(y)\rangle\cdot \Phi(a)(y)(v_0(y))\]
and
\begin{multline*}
\bar{\tau}(a e_\N b) = \tau(ab) = \int_Y \langle
v_0(y),\Phi(ab)(y)(v_0(y))\rangle_{\frH_y}\,\nu(\d y)\\
= \int_Y \langle
\Phi(a^\ast)(y)(v_0(y)),\Phi(b)(y)(v_0(y))\rangle_{\frH_y}\,\nu(\d
y) = \int_Y \rm{tr}(\Phi(a e_\N b)\Phi^{-1}|_{\frH_y})\,\nu(\d y).
\end{multline*}

In this representation an $\N$-submodule $V \leq L^2(\tau)$
corresponds to a subspace $\Phi(V) \leq \frH$ of the form
$\int_Y^\oplus V_y\,\nu(\d y)$ for some measurable subfield of
Hilbert spaces $V_y \leq \frH_y$, and the above calculation now
shows that
\[\bar{\tau}(P_V) = \int_Y \rm{dim}(V_y)\,\nu(\d y),\]
so $P_V$ has finite lifted trace if and only if the function
$y\mapsto \dim(V_y)$ is $\nu$-integrable.

We can enhance this picture further by noting that since $\a$
preserves $\N$ it must correspond to some $\nu$-preserving
transformation $S\curvearrowright Y$, and that since it also
preserves $\M$ and extends to a unitary operator on $L^2(\tau)$ it
must also preserve each of the cells $Y_n$.  Similarly, since $V$ is
$U_\a$-invariant, the transformation $S$ must preserve the function
$y\mapsto \deg(V_y)$. It follows that the unitary operator $\Phi
U_\a \Phi^{-1}$ on $L^2(\tau)$ is actually given by a measurable
section of unitary operators $\Psi:Y\to \coprod_{y \in
Y}\cal{U}(\frH_y)$ such that
\[\Phi U_\a \Phi^{-1}v(y) = \Psi(y)(v(S^{-1}y)).\]

Now, since $y \mapsto \deg(V_y)$ is $\nu$-integrable, for
sufficiently large $r \geq 1$ we know that
\[\int_{\{y\in Y:\ \deg(V_y) > r\}}\deg(V_y)\,\nu(\d y) < \eps.\]
Define
\[W := \int_{\{y\in Y:\ \deg(V_y) \leq r\}}^\oplus V_y\,\nu(\d y) \oplus
\int_{\{y\in Y:\ \deg(V_y) > r\}}^\oplus\{0\}\,\nu(\d y)\]
and $V_1 := \Phi^{-1}(W)$. Clearly $V_1$ is still a
right-$\N$-submodule that is $U_\a$-invariant, and it clearly also
has rank at most $r$ (since it suffices to prove this for $W$, for
which it follows by a relativized Gram-Schmidt construction of a
fibrewise-orthonormal basis exactly as in the setting of commutative
ergodic theory; see for instance \cite[Lemma 9.4]{Gla03}). Also, we have
\[\bar{\tau}(P_V - P_{V_1}) = \int_{\{y\in Y:\ \deg(V_y) > r\}}\deg(V_y)\,\nu(\d y) <
\eps.\]

Finally, the selection of unitaries $\Psi$ must preserve the field
of subspaces $V_y$ above the $S$-invariant set $\{y\in Y:\ \deg(V_y)
= s\}$ for each $s\leq r$.  Choosing an abstract $d$-dimensional
Euclidean space $W_d$ for each $d \leq r$ and adjusting each fibre
of $W$ by a unitary in order to identify each $V_y$ for which
$\dim(V_y)\leq r$ with $W_{\dim(V_y)}$, we obtain a new
representation of $V_1$ as a right-$\N$-submodule using these fibres
$W_d$, so that the action of $U_\a$ is now described by a measurable
family of unitaries $\Psi'(y) \in \cal{U}(W_{\dim(V_y)})$. Picking
an orthonormal basis for each $W_d$, writing these unitary operators
as unitary matrices in terms of these bases, noting that their
individual entries are now identified with elements of
$L^\infty(\mu) = \Phi(\N)$ and carrying everything back to
$L^2(\tau)$ using $\Phi^{-1}$ gives the desired expression for
$U_\a$.
\end{proof}

\begin{remark} Frustratingly, both the fact that a
$U_\a$-invariant $V$ of finite lifted trace may be approximated by a
$U_\a$-invariant $V_1$ of finite rank, and the fact that given such
a module of finite rank the action of $U_\a$ on it may be described
by a unitary element in $\cal{U}(M_{r\times r}(\N))$, seem to be
difficult to prove without the assumption that $\N$ is central and
the resulting representation of the action of $\N$ on $L^2(\mu)$ as
the multiplication action of some $L^\infty(\nu)$ on a measurable
field of Hilbert spaces.  It would be interesting to settle this
issue more generally:

\begin{ques}
Do these conclusions hold for a finite-lifted-trace invariant
sub\-mo\-dule corresponding to an arbitrary inclusion of finite
von Neumann algebras with a trace-preserving automorphism? \fin
\end{ques}
\end{remark}

Before moving on let us quickly note an important difference from
the setting of abelian von Neumann algebras.

\begin{example} If $\M$ is abelian, then from commutative
ergodic theory it is well-known that all the intermediate
$U_\a$-invariant submodules $V \leq L^2(\tau)$ that have finite-rank
over $\N$ together generate an intermediate sub\emph{algebra}
between $\N$ and $\M$, and that this then corresponds to an
intermediate measure-preserving system.  We will see shortly that an
analogous conclusion can sometimes be recovered in the
asymptotically abelian setting, but it is certainly not true for
general finite-rank submodules, even when the smaller algebra $\N$
is abelian.

Consider, for example, the inclusion $i:L\bbZ \cong
L^\infty(m_\bbT)\into L\bbF_2$ corresponding to the embedding of
$\bbZ$ as the cyclic subgroup $a^\bbZ$ of the free group $\bbF_2 =
\langle a,b\rangle$.  Here $LG$ is the group von Neumann algebra of
$G$, defined in Section \ref{4fail-sec}.  In this case we can
identify $L^2(\tau)$ as $\ell^2(\bbF_2)$ and $L^2(\tau|_\N)$ as the
subspace spanned by $\{\xi_{a^n}\}_{n\in \bbZ}$.  Now define $\a\in
\rm{Aut}\,L\bbF_2$ simply by lifting the group automorphism of
$\bbF_2$ that fixes $a$ and maps $b\mapsto ba$.  Now the subspace $V
:= \overline{\lin}\{\xi_{ba^n}:\ n\in\bbZ\}\leq \ell^2(\bbF_2)$ is a
$U_\a$-invariant right $\N$-module of rank one which is orthogonal
to $L^2(\tau|_\N)$.  On the other hand, although $\xi_b \in
\widehat{\M}\cap V$, we have $\a^m(\xi_b^2) = \a^m(\xi_{b^2}) =
\xi_{ba^mba^m}$ for $m\in\bbZ$, and it is easy to see that these
elements of $\M$ do not remain within any finite-rank
right-$\N$-submodule.

It is true that if $L^2(\tau)\ominus L^2(\tau|_\N)$ contains a
finite-rank right-$\N$-submodule $V$, then it also contains a
finite-rank left-$\N$-module in the form of $J(V)$, where $J$ is
the \emph{modular automorphism} on $V$, defined by extending the
conjugation map $a \mapsto a^*$ on $\M \equiv \hat \M$ by density.
The point is that it can happen that $J(V) \perp V$, and that all
elements of $J(V)$ are weakly mixed by $U_\a$: it is the
right-module $V$, and no other, that serves as the obstruction to
overall relative weak mixing coming from
Theorem~\ref{thm:rel-w-m-red}. \fin
\end{example}

We now introduce a useful technical concept.

\begin{dfn}[Central vectors]
A vector $\xi \in L^2(\tau)$ is \emph{central} if $m\xi = \xi m$
for all $m\in \M$.
\end{dfn}

\begin{lem}[No non-obvious central
vectors]\label{lem:no-nonobvious-centrals} The closure
$\overline{\Z(\M)\hat 1} = \overline{\hat 1\Z(\M)}$ is equal to the
set of all central vectors in $L^2(\tau)$.
\end{lem}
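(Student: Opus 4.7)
First, for the easy inclusion $\overline{\Z(\M)\hat 1} \subseteq \{\text{central vectors}\}$: for any $z \in \Z(\M)$, the equality $m\hat z = \widehat{mz} = \widehat{zm} = \hat z m$ shows that $\hat z$ is central, and moreover $\hat 1 z = z \hat 1 = \hat z$ so the two stated closures agree. The set of central vectors is the intersection, over $m \in \M$, of the kernels of the bounded operators $\xi \mapsto m\xi - \xi m$ on $L^2(\tau)$, hence closed; so it contains $\overline{\Z(\M)\hat 1}$.

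For the reverse inclusion, the plan is to invoke the direct-integral machinery already set up in the proof of Lemma~\ref{lem:concrete-repn-of-module}, applied with $\N := \Z(\M)$ (and with no dynamics). This yields a unitary equivalence
\[\Phi : L^2(\tau) \xrightarrow{\cong} \int_Y^\oplus \frH_y\,\nu(\d y)\]
under which $\Z(\M)$ is identified with $L^\infty(\nu)$ acting by pointwise multiplication, $\M$ is identified with a measurable field of von Neumann algebras $\M_y \subseteq B(\frH_y)$, and $\hat 1$ corresponds to a unit section $v_0(y)$ that is cyclic-separating for the tracial representation of $\M_y$ on $\frH_y$. By the standard central-decomposition theorem for von Neumann algebras on separable Hilbert spaces (see \cite[Theorem 14.2.2]{KadRin97}), the fibres $\M_y$ may be arranged to be factors for $\nu$-a.e.~$y$.

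Given a central $\xi \in L^2(\tau)$, write $\Phi(\xi) = (\xi(y))_{y}$. Choosing a countable weakly-dense $\ast$-subalgebra of $\M$, the centrality relation $m\xi = \xi m$ translates into the fibrewise relation $m(y)\xi(y) = \xi(y)m(y)$ for $\nu$-almost every $y$ and every $m$ in this subalgebra, hence, by approximation, for every $m \in \M_y$. Since $\M_y$ is a factor acting on $\frH_y$ in standard form, any central vector must be a scalar multiple of $v_0(y)$: this is a classical fact, provable either via the theory of unbounded operators affiliated with a finite factor (the densely defined operator $a v_0(y) \mapsto a\xi(y)$ commutes with right multiplication by $\M_y$, so is affiliated with $\M_y$ itself by centrality, hence with $\Z(\M_y) = \bbC$), or by a direct spectral-theoretic computation. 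Consequently there is a measurable function $f:Y\to\bbC$ with $\xi(y) = f(y)v_0(y)$ almost surely, for which $\int|f|^2\,\d\nu = \|\xi\|_{L^2(\tau)}^2 < \infty$. Truncating to $f_n := f\cdot 1_{\{|f|\leq n\}} \in L^\infty(\nu) \cong \Z(\M)$ gives a sequence with $\widehat{f_n} \to \xi$ in $L^2(\tau)$, exhibiting $\xi\in \overline{\Z(\M)\hat 1}$.

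The principal non-trivial ingredients are thus the central decomposition into factor fibres and the measurable-selection step passing from centrality of $\xi$ to fibrewise centrality $\nu$-a.e.; both are standard but rely on separability of $\M$. The factor case is the crux, where one must rule out any ``exotic'' central vectors not arising from the scalar span of $v_0(y)$, which is precisely what the affiliated-operator argument accomplishes.
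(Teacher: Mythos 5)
Your proof is correct, but it takes a genuinely different and less economical route than the paper. You first invoke the central decomposition to write $L^2(\tau)$ as a direct integral over $\Z(\M) \cong L^\infty(\nu)$ with a.e. factor fibres, reduce centrality of $\xi$ to fibrewise centrality of $\xi(y)$ (which requires a countable-density and measurable-selection argument, tacitly using separability), and then in each factor fibre apply an affiliated-operator argument to conclude $\xi(y) \in \bbC v_0(y)$. The paper dispenses with the direct integral entirely: it defines the closable operator $a_\xi$ on $\M\hat 1$ by $a_\xi(m\hat 1) := \xi m$, observes that centrality of $\xi$ makes $a_\xi$ commute with both $\M_{\mathrm{left}}$ and $\M_{\mathrm{right}}$, and then the spectral projections of the self-adjoint parts of $a_\xi$ land in $\M_{\mathrm{left}}' \cap \M_{\mathrm{right}}' = \Z(\M)$, yielding the required approximants $a_n \in \Z(\M)$ directly. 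Your fibrewise argument is really the special case $\Z(\M_y) = \bbC$ of the paper's argument, wrapped inside the machinery of direct integrals; so the core idea is the same, but you have added a layer of reduction (central decomposition plus measurable selection) that the global argument makes unnecessary. The paper's version is shorter and avoids the technical bookkeeping with measurable fields; yours illuminates the factor case but pays for it with the overhead of passing through disintegration.
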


\begin{proof} Suppose that $\xi \in L^2(\tau)$ is central.
 Define $a_\xi:\M\hat 1 \to L^2(\tau)$ by $a_\xi(m\hat 1) := \xi m$.
This is a densely-defined linear operator on $L^2(\tau)$, and it is
closable because if $m_n\hat 1 = \hat 1m_n\to 0$ in $\|\cdot\|_{L^2(\tau)}$
for some sequence $(m_n)_{n \geq 1}$ in $\M$ and also $\xi m_n \to
\xi'$ in $\|\cdot\|_{L^2(\tau)}$, then we have
\[\langle m'\hat 1,\xi'\rangle = \lim_{n\to\infty}\langle m'\hat 1,\xi m_n\rangle
= \lim_{n\to\infty}\langle \hat 1 m_n^\ast,(m')^\ast\xi\rangle = 0\]
for every $m' \in \M$, and so in fact we must have $\xi' = 0$. Also,
we clearly have \[a_\xi(m\hat 1) = a_\xi(\hat 1 m) = \xi m = m\xi =
(a_\xi(\hat 1))m = m(a_\xi(\hat 1))\] for every $m \in \M$, so
$a_\xi$ is affiliated with both the right- and left-actions of $\M$
on $L^2(\tau)$. The same therefore holds for $a_\xi + a^\ast_\xi$
and $\rm{i}(a_\xi - a^\ast_\xi)$, and now these are self-adjoint and
so each of them may be expressed as an unbounded spectral integral
all of whose contributing spectral projections must lie in
$\M_{\rm{left}}' \cap \M_{\rm{right}}' = \Z(\M)$.  Therefore,
approximating $a_\xi = \frac{1}{2}(a_\xi + a^\ast_\xi) +
\frac{1}{2}(a_\xi - a^\ast_\xi)$ by a sum of two large but bounded
integrals with respect to the respective resolutions of the
identity, we obtain a sequence of elements $a_n \in \Z(\M)$ such
that $a_n\to a_\xi$ pointwise on $\rm{dom}(\rm{clos}(a_\xi))
\supseteq \M\hat 1$, and hence such that $a_n\hat 1 \to \xi$ in
$\|\cdot\|_{L^2(\tau)}$. Hence $\xi \in \overline{\Z(\M)\hat 1}$, as
required.
\end{proof}

\begin{prop}\label{prop:f.r.central} If $(\M,\tau,\a)$ is an asymptotically abelian von Neumann dynamical system,
$\N$ is a shift-invariant central von Neumann subalgebra, and $V \leq
L^2(\tau)$ is an $\a$-invariant right-$\N$-submodule of $\M$
having finite lifted trace then all elements of $V$ are central
vectors.
\end{prop}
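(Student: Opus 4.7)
The plan is to exploit Lemma~\ref{lem:concrete-repn-of-module} together with a Cesàro-averaging estimate driven by asymptotic abelianness, with centrality of $\N$ being precisely the feature that makes the calculation go through. First, I would reduce to the case that $V$ is of finite rank. The set of central vectors in $L^2(\tau)$ is norm-closed and forms a right-$\N$-submodule (here one uses $\N \subseteq \Z(\M)$), so it is enough to show that the orthogonal basis vectors $\xi_1, \ldots, \xi_r$ furnished by Lemma~\ref{lem:concrete-repn-of-module} are all central; centrality then propagates through the orthogonal decomposition $V_1 = \sum_{i=1}^r \overline{\xi_i \N}$, and from each finite-rank approximant $V_1^{(\eps)}$ back to all of $V$ via the exhaustion $V_1^{(\eps)} \uparrow V$ as $\eps \downarrow 0$ that is inherent in the direct-integral proof of that lemma.

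The heart of the argument is the following computation. Fix $m \in \M$ and a basis vector $\xi_i$, and write $U^{(n)} := U \cdot \a(U) \cdots \a^{n-1}(U) \in \cal{U}_{r \times r}(\N)$ for the $n$-step cocycle, so that $U_\a^n(\xi_i) = \sum_{j=1}^r \xi_j\, u_{ji}^{(n)}$. Using the intertwining identity $U_\a m U_\a^{-1} = \a(m)$ on both the left and the right $\M$-actions, together with the crucial fact that each $u_{ji}^{(n)} \in \N \subseteq \Z(\M)$ commutes with $\a^n(m)$, I find
\[U_\a^n [m,\xi_i] \;=\; [\a^n(m), U_\a^n\xi_i] \;=\; \sum_{j=1}^r [\a^n(m),\xi_j]\, u_{ji}^{(n)}.\]
Taking $L^2$-norms (noting $U_\a^n$ is isometric) and averaging over $n = 1, \ldots, N$, then using that the entries of a unitary matrix over $\N$ satisfy $\|u_{ji}^{(n)}\|_\infty \leq 1$ and that right multiplication by such an entry is a contraction on $L^2(\tau)$, I obtain
\[\|[m,\xi_i]\|_{L^2(\tau)} \;\leq\; \sum_{j=1}^r \frac{1}{N} \sum_{n=1}^N \|[\a^n(m),\xi_j]\|_{L^2(\tau)}.\]

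It then remains to show that each inner Cesàro average tends to $0$ as $N \to \infty$. The $\xi_j$ live in $L^2(\tau)$ rather than in $\M$, but by density of $\widehat{\M}$ I can approximate $\xi_j \approx \widehat{b_j}$ for some $b_j \in \M$; the crude uniform bound $\|[\a^n(m), \xi_j - \widehat{b_j}]\|_{L^2(\tau)} \leq 2\|m\|\,\|\xi_j - \widehat{b_j}\|_{L^2(\tau)}$ reduces matters to $\frac{1}{N}\sum_n \|[\a^n(m), b_j]\|_{L^2(\tau)}$, which vanishes as $N \to \infty$ precisely by the hypothesis of asymptotic abelianness. Letting the approximation error shrink then forces $\|[m,\xi_i]\|_{L^2(\tau)} = 0$ for every $m \in \M$, as desired. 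The main subtlety lies in the bookkeeping within the cocycle expansion: without the centrality of $\N$ one could not slide $u_{ji}^{(n)}$ past $\a^n(m)$ on the right, and the scalar asymptotic-abelianness input would then fail to produce the required cancellation after the vector-valued expansion.
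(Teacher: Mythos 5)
Your proof is correct and follows essentially the same route as the paper's: reduce to the finite-rank approximants furnished by Lemma~\ref{lem:concrete-repn-of-module}, unwind the cocycle to express the commutator $[m,\xi_i]$ as a Ces\`aro-averageable sum of commutators $[\a^n(m),\xi_j]$ weighted by entries of the cocycle matrix, slide those entries harmlessly past $\a^n(m)$ using centrality, and kill the average via asymptotic abelianness after approximating the $\xi_j$ by elements of $\widehat{\M}$. The only (cosmetic) difference is that you iterate the cocycle forwards while the paper inverts it to write $\xi_i = \sum_j U_\a^n(\xi_j)\a^n(u^{(-n)}_{ji})$; and you correctly bound the entries only by $\|u^{(n)}_{ji}\|\le 1$ (the paper's remark that each entry is ``still a unitary'' is a small overstatement for $r>1$, but contractivity is all that either argument needs).
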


\begin{proof}
Clearly it will suffice to prove this for all finite-rank
approximants $V_1$ to $V$ as given by
Lemma~\ref{lem:concrete-repn-of-module}. Thus we may assume that $V$
actually has finite rank. Let $\xi_1$, $\xi_2$, \ldots, $\xi_r$ and
$U = (u_{ji})_{1\leq i,j \leq r}\in \M_{r\times r}(\N)$ be as given
by the third part of that lemma.

Since $\a$ is asymptotically abelian, we have for any $a\hat 1 \in
\M\hat 1$ and $b \in \M$ that
\[\frac{1}{N}\sum_{n=1}^N\|bU_\a^n(a\hat 1) - U_\a^n(a\hat 1)b\|_{L^2(\tau)}
= \frac{1}{N}\sum_{n=1}^N\|b\a^n(a) - \a^n(a)b\|_{L^2(\tau)} \to 0.\]

Approximating an arbitrary $\xi \in L^2(\tau)$ by elements of
$\M\hat 1$, it follows that for each fixed $b \in \M$ and $\xi \in
L^2(\tau)$ we have
\[\lim_{N\to\infty}\frac{1}{N}\sum_{n=1}^N\|bU_\a^n(\xi) -
U_\a^n(\xi)b\|_{L^2(\tau)} = 0.\]

On the other hand, we know that
\[U_\a(\xi_i) = \sum_{j=1}^r \xi_ju_{ji}\quad\quad\forall i=1,2,\ldots,r,\]
and so, writing $U^n = (u^{(n)}_{ji})_{1\leq i,j\leq r}$, we have
\begin{multline*}
U^{-n}_\a(\xi_i) = \sum_{j=1}^r \xi_ju^{(-n)}_{ji}\quad\quad
\Rightarrow\quad\quad \xi_i = \sum_{j=1}^r
U_\a^n(\xi_j)\a^n(u^{(-n)}_{ji})\quad\quad\forall i=1,2,\ldots,r.
\end{multline*}

Clearly each $u^{(-n)}_{ji}$ is still a unitary, and so from this,
averaging in $n$ and the centrality of $\N$ we obtain
\begin{eqnarray*}
\|b\xi_i - \xi_i b\|_{L^2(\tau)} &=&
\Big\|\frac{1}{N}\sum_{n=1}^N\Big(\sum_{j=1}^r
bU_\a^n(\xi_j)\a^n(u^{(-n)}_{ji}) - \sum_{j=1}^r
U_\a^n(\xi_j)\a^n(u^{(-n)}_{ji})b\Big)\Big\|_{L^2(\tau)}\\
&=& \Big\|\frac{1}{N}\sum_{n=1}^N\sum_{j=1}^r \big(bU_\a^n(\xi_j)
-
U_\a^n(\xi_j)b\big)\a^n(u^{(-n)}_{ji})\Big\|_{L^2(\tau)} \\
&\leq& \sum_{j=1}^r\frac{1}{N}\sum_{n=1}^N \|bU_\a^n(\xi_j) -
U_\a^n(\xi_j)b\|_{L^2(\tau)},
\end{eqnarray*}
and now since each of the summands in $j$ tends to $0$ as $N\to
\infty$, it follows that we must in fact have $b\xi_i = \xi_i b$ for
every $i \leq r$, and hence (taking $\N$-linear combinations, which
have central coefficients, and then a completion) that all vectors
in $V$ are central, as required.
\end{proof}

Let us note explicitly the following simple corollary of the above
result.

\begin{cor}\label{cor:invts-central}
If $(\M,\tau,\a)$ is an asymptotically abelian von Neumann
dynamical system, then the subalgebra $\M^\a := \{ a \in \M: \a(a)
= a \}$ of individually $\a$-invariant elements is central.
\end{cor}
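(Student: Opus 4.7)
The cleanest route bypasses Proposition \ref{prop:f.r.central} entirely and simply unwraps Definition \ref{dfn:asympAb}. Fix $a \in \M^\a$ and an arbitrary $b \in \M$. Because $\a(a) = a$, we have $\a^n(a) = a$ for every $n \geq 1$, so every summand of the Ces\`aro average
\[
\frac{1}{N}\sum_{n=1}^N \|[\a^n(a),b]\|_{L^2(\tau)}
\]
equals the single number $\|[a,b]\|_{L^2(\tau)}$. By asymptotic abelianness this average tends to $0$, forcing $\|[a,b]\|_{L^2(\tau)} = 0$. Since $\tau$ is faithful, the canonical embedding $\M \hookrightarrow L^2(\tau)$ is injective, so $[a,b] = 0$ in $\M$. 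As $b \in \M$ was arbitrary, $a$ commutes with every element of $\M$, i.e.\ $a \in \Z(\M)$.

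If one wishes to justify the description of the corollary as a consequence of Proposition \ref{prop:f.r.central}, the argument runs as follows. Take $\N := \Z(\M)$, which is $\a$-invariant and central, and for $a \in \M^\a$ consider the right-$\Z(\M)$-submodule $V := \overline{\hat{a}\cdot \Z(\M)} \subseteq L^2(\tau)$. Then $V$ has rank at most one and, because $\a$ preserves $\Z(\M)$ and fixes $a$, is $U_\a$-invariant via $U_\a(\hat{a}\cdot n) = \hat{a}\cdot \a(n)$. The auxiliary operator $a e_{\Z(\M)} a^\ast \in \langle\M, e_{\Z(\M)}\rangle$ satisfies $\bar\tau(a e_{\Z(\M)} a^\ast) = \tau(aa^\ast) \leq \|a\|^2 < \infty$, and the spectral-truncation trick used in the proof of Proposition \ref{prop:nonrelind-to-invar} then exhibits $P_V$ as a finite-lifted-trace projection. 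Proposition \ref{prop:f.r.central} now identifies every vector of $V$, and in particular $\hat a$, as a central vector; Lemma \ref{lem:no-nonobvious-centrals} puts $\hat a \in \overline{\Z(\M)\hat 1}$, and since $\hat a \in \widehat{\M}$ this reads as $a = E_{\Z(\M)}(a) \in \Z(\M)$.

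\textbf{Main obstacle.} For the direct proof there is essentially no obstacle beyond observing that vanishing of $\|\cdot\|_{L^2(\tau)}$ on a bounded operator implies its vanishing in $\M$, which is immediate from faithfulness of $\tau$. In the alternative route the only technical point is the finite-lifted-trace estimate for $P_V$, and that is handled by the same spectral-projection argument already used in Proposition \ref{prop:nonrelind-to-invar}. I would present the direct one-line argument in the body of the paper and leave the structural version as a remark, since it is the asymptotic-abelian hypothesis itself, rather than the subtler decomposition of Proposition \ref{prop:f.r.central}, that is genuinely doing all the work.
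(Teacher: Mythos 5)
Your direct argument is correct and is precisely what the paper has in mind when it adds, after citing Proposition \ref{prop:f.r.central}, that ``this claim can also be easily verified directly from the definition of asymptotic abelianness'': with $\a^n(a)=a$ every term of the Ces\`aro average is the constant $\|[a,b]\|_{L^2(\tau)}$, which must vanish, and faithfulness of $\tau$ then kills $[a,b]$. Your alternative route is also in the spirit of the paper's primary argument, but there is a small difference worth noting: the paper takes the \emph{trivial} central subalgebra $\N = \bbC 1$ rather than $\N = \Z(\M)$, and with that choice the finite-lifted-trace hypothesis is immediate because $\bbC\hat a$ is a one-dimensional subspace, so $P_V$ is a rank-one projection on $L^2(\tau)$ with $\bar\tau(P_V)=1$. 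With your choice $\N = \Z(\M)$ the conclusion still holds --- for any rank-one right-$\N$-submodule over a central $\N$ one reads off $\bar\tau(P_V)=\int_Y \dim(V_y)\,\nu(\d y)\leq 1$ from the direct integral picture of Lemma \ref{lem:concrete-repn-of-module} --- but your route to it via the operator $ae_\N a^*$ and the spectral truncation of Proposition \ref{prop:nonrelind-to-invar} does not quite deliver $P_V$ itself: that argument only produces some nonzero $U_\a$-invariant subprojection $P(I)$ of finite lifted trace, and one would still need to see that its range contains $\hat a$. The observation $\bar\tau(P_V)\leq 1$ is the cleaner way to close that gap, or simply use $\N=\bbC 1$ as the paper does. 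In either case your lead argument is the right one and is self-contained.
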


\begin{proof} Of course, if $\a(a) = a$ then $\lin\{\hat 1
a\}$ is a rank-one $\a$-invariant submodule of $L^2(\tau)$ for the
trivial central subalgebra $\N := \bbC \hat 1$, and the claim follows from Proposition \ref{prop:f.r.central}.  This claim can also be easily verified directly from the definition of asymptotic abelianness.
\end{proof}

Finally we can use the above results to prove
Theorem~\ref{thm:rel-w-m-centre}.

\begin{proof} (Proof of Theorem~\ref{thm:rel-w-m-centre}) Suppose, for
the sake of contradiction, that $\a$ were not weakly mixing relative
to $\Z(\M)\subset \M$.  Then
Proposition~\ref{prop:nonrelind-to-invar} gives a nontrivial
right-$\Z(\M)$-submodule $V \leq L^2(\tau) \ominus
\overline{\Z(\M)\hat 1}$ of finite lifted trace, and now
Proposition~\ref{prop:f.r.central} tells us that $V$ must consist of
central vectors. However, Lemma~\ref{lem:no-nonobvious-centrals} now
gives $V \leq \overline{\Z(\M)\hat 1}$, implying a contradiction with
our assumption that $V\perp \overline{\Z(\M)\hat 1}$.
\end{proof}

Note that for the results in this section it suffices to assume that
for every $a\in\M$ there exists a sequence $\{n_j\}$ such that
$\lim_{j\to\infty}\|[\a^{n_j}(a),b]\|_{L^2(\tau)}=0$ for every
$b\in\M$.   We do not know whether this condition is strictly weaker than asymptotically abelianness.

\begin{remark} A variant of Theorem~\ref{thm:rel-w-m-centre} can also be deduced from the results in \cite{NicStrZsi03} (and more specifically, Theorem 4.2 and Proposition 5.5 of that paper); we thank the anonymous referee for pointing out this fact.  More specifically, the result is that if $\alpha$ is an automorphism of a finite von Neumann algebra $\M$ that leaves invariant a faithful normal trace $\tau$, and $E_\tau$ is the conditional expectation to the factor
$$
\M_r := \overline\lin^{\mathrm{wot}}\{a\in \M:\ \a (a)=\lambda a \text{ for some } \lambda\in\bbT\},$$
then  for any $a,b \in \M$ one has
$$ \lim_{N \to \infty} \frac{1}{N} \sum_{n=1}^N |\langle E_\tau(a^* \alpha^n(a) )- E_\tau(a)^* \alpha^n(E_\tau(a)), b \rangle_{L^2(\tau)}| = 0;$$
in particular, for $N$ going to infinity along a density one set of integers, the expression $E_\tau(a^* \alpha^n(a) )- E_\tau(a)^* \alpha^n(E_\tau(a))$ converges to zero in the weak operator topology.  This property is weaker than the relative weak mixing property with respect to this factor (which one does not expect to hold in general, even in the abelian case), but on the other hand does not require any hypothesis of asymptotic abelianness.
\end{remark}

\section{Triple averages for non-asymptotically-abelian systems}\label{triple}

The use to which we put relative weak mixing in the preceding
section is very special to asymptotically abelian systems: in
general there seems to be no way to track the error term resulting
from the re-arrangement at the heart of the proof of
Theorem~\ref{thm:rel-w-m-red} without this assumption. However, in
the special case of triple averages this problem does simplify
somewhat, provided we assume instead that our system
$(\M,\tau,\alpha)$ is \emph{ergodic}, so that $\M^\alpha = \bbC 1$.
In this case we will be able to obtain convergence weakly and in
norm, as well as recurrence on a dense set (Theorem
\ref{thm:gen-triple-aves}).

This assumption is not so innocuous as might be expected from its
analog in the world of commutative ergodic theory. In that setting
it is possible quite generally to decompose a system (that is,
more precisely, to decompose its invariant measure) into ergodic
components, and then many assertions about the whole system,
including multiple recurrence and the convergence of multiple
averages, follow if they can be proved for each ergodic component
separately.  However, in the setting of a general von Neumann
dynamical system, this decomposition is available only if $\M^\a$
is central in $\M$; otherwise the automorphism $\a$ can exhibit
genuinely new phenomena precisely in virtue of having the
nontrivial fixed subalgebra $\M^\a$ to `move around'.  This was
already seen in the failure of recurrence on a dense set when the
ergodicity hypothesis is dropped (Theorem \ref{3fail}).

The key for convergence of triple averages is the following
decomposition similar to the commutative case, first established (in a slightly more general setting) in \cite{NicStrZsi03} (and more specifically, from Theorem 4.2 and Proposition 5.5 in that paper); for the convenience of the reader we give a short proof of that decomposition here. Note that the result does not require ergodicity of the system.  We remark that a closely related decomposition was also used in \cite{fidaleo28}.

\begin{prop}[Decomposition of von Neumann dynamical systems]\label{prop:decomposition}\cite{NicStrZsi03}
Let $(\M,\tau,\a)$ be a von Neumann dynamical system.  Then one
has the orthogonal decomposition $\M=\M_r\oplus \M_s$, where
\begin{eqnarray*}
\M_r&:=&\overline\lin^{\mathrm{wot}}\{a\in \M:\ \a (a)=\lambda a \text{ for some } \lambda\in\bbT\}
 \text{ and}
\\
\M_s&:=&\left\{a\in \M:\  \lim_{N\to\infty} \frac{1}{N}
\sum_{n=1}^N |\tau(b\, \a^n (a))|=0 \quad \text{for every } b\in
\M\right\},
\end{eqnarray*}
i.e., $\M_r$ is the von Neumann subalgebra spanned by the eigenvectors of
$\alpha$ and $\M_s$ is the subspace of the elements of $\M$ that
are weakly mixed by $\alpha$. The corresponding projection onto
$\M_r$ is the conditional expectation of $\M$ onto $\M_r$ and in
particular preserves positivity.
\end{prop}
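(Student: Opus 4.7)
The plan is to apply the Jacobs--de Leeuw--Glicksberg decomposition to the unitary extension $U_\a$ of $\a$ acting on $L^2(\tau)$, and then pull the resulting Hilbert-space splitting back to an algebraic one on $\M$. JdLG yields the orthogonal decomposition $L^2(\tau) = H_r \oplus H_s$, where $H_r$ is the closed linear span of the $\bbT$-eigenvectors of $U_\a$ and $H_s = \{\xi \in L^2(\tau) :\ \frac{1}{N}\sum_{n=1}^N |\langle U_\a^n \xi,\eta\rangle| \to 0 \text{ for every } \eta \in L^2(\tau)\}$. I would then use the identity $\tau(b\a^n(a)) = \langle \widehat{b^*}, U_\a^n \hat a\rangle$, together with the density of $\hat\M$ in $L^2(\tau)$ and the uniform bound $|\langle \eta, U_\a^n \hat a\rangle| \leq \|\eta\|\,\|\hat a\|_2$, to identify $\M_s$ with $\{a \in \M :\ \hat a \in H_s\}$.

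Turning to $\M_r$: first I would observe that it is in fact a von Neumann subalgebra. Indeed, the eigenspaces $V_\l := \{a \in \M : \a(a) = \l a\}$ for $\l \in \bbT$ satisfy $V_\l^\ast = V_{\bar\l}$ and $V_\l V_\mu \subseteq V_{\l\mu}$, so their linear span is a unital $\ast$-subalgebra, whose weak-operator closure is then a von Neumann algebra. The crux of the proof is to establish $\overline{\hat{\M_r}}^{L^2} = H_r$. The inclusion $\subseteq$ is immediate. For the reverse, it suffices to show that any $L^2$-eigenvector $\xi$ with $U_\a \xi = \l \xi$, $|\l|=1$, lies in $\overline{\hat{\M_r}}^{L^2}$. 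Picking $\hat{a_n} \to \xi$ in $L^2$-norm with $a_n \in \M$, the mean ergodic theorem applied to the unitary $\bar\l U_\a$ gives $\frac{1}{N}\sum_{k=1}^N \bar\l^k U_\a^k \hat{a_n} \to P_\l \hat{a_n}$ in $L^2$-norm, where $P_\l$ denotes the orthogonal projection onto $\ker(U_\a - \l I)$; hence $P_\l \hat{a_n} \to \xi$. I would then realize each $P_\l \hat{a_n}$ as $\hat{b_n}$ for some $b_n \in V_\l$ by taking $b_n$ to be any weak-operator cluster point of the algebraic Cesàro averages $A_N(a_n) := \frac{1}{N}\sum_{k=1}^N \bar\l^k \a^k(a_n)$, which lie in the weak-operator-compact $\|a_n\|$-ball of $\M$: the identity $\a(A_N(a_n)) - \l A_N(a_n) = O(1/N)$ in $\|\cdot\|$ forces $\a(b_n) = \l b_n$, while agreement of the weak-$L^2$ subsequential limit $\widehat{A_{N_j}(a_n)} \to \hat{b_n}$ with the $L^2$-norm limit $\widehat{A_N(a_n)} \to P_\l \hat{a_n}$ pins down $\hat{b_n} = P_\l \hat{a_n}$.

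Once $H_r = \overline{\hat{\M_r}}^{L^2}$ has been established, the orthogonal projection $P_r : L^2(\tau) \to H_r$ coincides with the projection $e_{\M_r}$ associated to the inclusion $\M_r \subseteq \M$, and by Lemma~\ref{lem:condexp} it restricts to the conditional expectation $E_{\M_r} : \M \to \M_r$, which automatically preserves positivity. Every $a \in \M$ then decomposes uniquely as $a = E_{\M_r}(a) + (a - E_{\M_r}(a))$, with the first summand in $\M_r$ and the second in $\M_s$ (since $\widehat{a - E_{\M_r}(a)} = P_s \hat a \in H_s$), and the $L^2$-orthogonality of the two summands follows from $H_r \perp H_s$. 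The main obstacle is the spectral-approximation step in the second paragraph: securing that weak-operator cluster points of the bounded averages $A_N(a_n)$ are simultaneously honest members of $V_\l$ and implement the spectral projection onto $\ker(U_\a - \l I)$ at the $L^2$ level; once that is in place, everything else is bookkeeping.
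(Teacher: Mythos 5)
Your proof is correct and follows the same basic strategy as the paper: both apply the Jacobs–Glicksberg–de Leeuw (JdLG) decomposition to the unitary $U_\a$ on $L^2(\tau)$ and then descend the resulting Hilbert-space splitting to the algebra. The interesting divergence is in how you and the paper handle the reversible part. The paper cites a result of St\o rmer (1974) asserting that the $L^2$-eigenvectors of $U_\a$ actually \emph{belong} to $\M$, which immediately gives $\M_r = \M \cap L^2_r(\tau)$ and $\M_s = \M \cap L^2_s(\tau)$; it then invokes the standard fact that the weak-operator closure and $L^2$-closure agree for self-adjoint subalgebras. You instead give a self-contained argument: that the spectral projection $P_\l$ carries $\hat\M$ into $\hat{V_\l}$, by taking a WOT cluster point $b$ of the algebraic Ces\`aro averages $A_N(a)$ (norm-bounded by $\|a\|$), using the $O(1/N)$ defect estimate to force $b \in V_\l$, and matching the weak-$L^2$ subsequential limit of $\widehat{A_N(a)}$ with the $L^2$-norm limit $P_\l\hat a$ from the mean ergodic theorem. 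This gives the density statement $H_r = \overline{\hat{\M_r}}^{L^2}$, which is strictly weaker than St\o rmer's conclusion (you do not recover that an arbitrary $L^2$-eigenvector is itself of the form $\hat a$ with $a\in\M$), but it is exactly what is needed: the JdLG projection onto $H_r$ is then $e_{\M_r}$, which restricts to the conditional expectation by Lemma~\ref{lem:condexp}. Your route buys self-containedness; the paper's is shorter by relying on the external citation and gives the cleaner intrinsic identification $\M_r = \M\cap L^2_r(\tau)$. Two small polishing remarks: (i) the inclusion $\overline{\hat{\M_r}}^{L^2}\subseteq H_r$ that you call ``immediate'' still invokes Kaplansky density (or, as the paper says, the coincidence of WOT- and $L^2$-closures on bounded sets) to pass from $\hat{\lin V_\l}\subseteq H_r$ to $\hat{\M_r}\subseteq H_r$, since $\M_r$ is defined as a WOT-closure; (ii) extracting a WOT-convergent \emph{subsequence} of $A_N(a)$ (rather than a subnet) uses the metrizability of WOT on bounded sets, which is available here because the Hilbert space is separable.
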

\begin{proof} Since the continuation $U_\a$ of $\a$ to
$L^2(\tau)$ is a unitary operator, the Jacobs--Glicksberg--de
Leeuw decomposition holds for $U_\a$ (see e.g. \cite[Section
2.4]{Kre85}), i.e., $L^2(\tau)=L^2_r(\tau)\oplus L^2_s(\tau)$,
where the \emph{reversible part} $L^2_r(\tau)$ is defined as
$$ L^2_r(\tau)=\overline\lin \{x: U_\a
(x)=\lambda x \text{ for some } \lambda\in\bbT\}$$
and the \emph{stable part} $L^2_s(\tau)$ is defined as the space of all $x\in L^2(\tau)$ such that
$$
  \lim_{N\to\infty} \frac{1}{N} \sum_{n=1}^N |\langle U_\a^n (x), y\rangle|=0 \quad \text{for every } y\in L^2(\tau).
$$
Moreover, this decomposition is orthogonal since $U_\a$ is
unitary. Note that we do not need here the Jacobs--Glicksberg--de
Leeuw decomposition in full generality but only its version for
unitary operators, which can be also proved via the spectral
theorem.

By a result of St\o rmer~\cite{stormer:1974}, the eigenvectors of
$U_\a$ belong to $\M$. We thus have $\M_r=\M\cap L^2_r(\tau)$ and
$\M_s=\M\cap L^2_s(\tau)$. The fact that the weak operator closure
and the closure in the $L^2(\tau)$-topology coincide for
self-adjoint subalgebras implies the second formula for $\M_r$ and
thus $\M_r$ is a von Neumann subalgebra of $\M$. The conditional
expectation now maps $\M$ onto $\M_r$ assuring the orthogonal
decomposition $\M=\M_r\oplus \M_s$.
\end{proof}

In the remainder of this section we assume our system is ergodic.

\begin{prop}[Convergence of triple averages]
Let $(\M,\tau,\a)$ be an ergodic von Neumann
dynamical system. Then the averages
\begin{equation}\label{eq:convergence}
  \frac{1}{N} \sum_{n=1}^N \a^n (a)\, \a^{2n} (b)
\end{equation}
converge in $\|\cdot\|_{L^2(\tau)}$ as $N\to \infty$ for every $a,b\in
\M$.
\end{prop}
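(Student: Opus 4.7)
The plan is to use Proposition~\ref{prop:decomposition} to decompose $a=a_r+a_s$ and $b=b_r+b_s$ with $a_r,b_r\in\M_r$ and $a_s,b_s\in\M_s$, and then expand
\[
\a^n(a)\a^{2n}(b) = \a^n(a_r)\a^{2n}(b_r) + \a^n(a_r)\a^{2n}(b_s) + \a^n(a_s)\a^{2n}(b_r) + \a^n(a_s)\a^{2n}(b_s).
\]
I will argue that each of the three cross terms vanishes in $L^2(\tau)$-norm on averaging, and that the reversible-reversible term converges.

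For the cross terms, set $v_n := \a^n(x)\a^{2n}(y)$ for $(x,y)$ one of the pairs $(a_r,b_s)$, $(a_s,b_r)$, $(a_s,b_s)$. I would invoke the Hilbert-space van der Corput inequality applied to $(v_n)$ in $L^2(\tau)$. The key tracial manipulation: using $\a$-invariance of $\tau$ (applying $\a^{-2n}$) followed by cyclicity,
\[
\langle v_n, v_{n+h}\rangle_{L^2(\tau)} = \tau\bigl(\a^{2h}(y)\,y^*\cdot \a^{-n}(x^*\a^h(x))\bigr).
\]
Ergodicity of $\a$ means $\M^\a=\bbC 1$, so the mean ergodic theorem gives $\frac{1}{N}\sum_{n=1}^N \a^{-n}(x^*\a^h(x))\to \tau(x^*\a^h(x))\cdot 1$ in $L^2(\tau)$; consequently
\[
\lim_{N\to\infty}\frac{1}{N}\sum_{n=1}^N \langle v_n, v_{n+h}\rangle = \tau(y^*\a^{2h}(y))\cdot\tau(x^*\a^h(x)).
\]
If $x\in\M_s$, then $\frac{1}{H}\sum_{h=1}^H|\tau(x^*\a^h(x))|\to 0$ by the definition of $\M_s$, and the remaining factor is bounded by $\|y\|^2$. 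If instead $y\in\M_s$, then, using that $\M_s$-behaviour on $|\tau(y^*\a^n(y))|$ persists along the even subsequence, $\frac{1}{H}\sum_h |\tau(y^*\a^{2h}(y))|\to 0$, and now $|\tau(x^*\a^h(x))|\leq \|x\|^2$. Either way van der Corput yields $\|\frac{1}{N}\sum_{n=1}^N v_n\|_{L^2(\tau)}\to 0$.

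For the reversible-reversible term, the strategy is to reduce to eigenvectors. By St\o rmer's result used in the proof of Proposition~\ref{prop:decomposition}, the linear span $\E$ of the eigenvectors of $\a$ in $\M$ is a $*$-subalgebra of $\M_r$ that is wot-dense. By Kaplansky's density theorem its unit ball is strongly dense in the unit ball of $\M_r$, so we may approximate $a_r$, $b_r$ in $\|\cdot\|_{L^2(\tau)}$ by elements $a_k, b_k\in\E$ with $\|a_k\|\leq \|a_r\|$, $\|b_k\|\leq \|b_r\|$. Using the bound $\|cd\|_{L^2(\tau)}\leq \|c\|\,\|d\|_{L^2(\tau)}$ and the triangle inequality, one gets the uniform (in $N$) estimate
\[
\Bigl\|\tfrac{1}{N}\!\sum_{n=1}^N \a^n(a_r)\a^{2n}(b_r) - \tfrac{1}{N}\!\sum_{n=1}^N \a^n(a_k)\a^{2n}(b_k)\Bigr\|_{L^2(\tau)} \leq \|b_r\|\,\|a_r-a_k\|_{L^2(\tau)} + \|a_k\|\,\|b_r-b_k\|_{L^2(\tau)},
\]
so it suffices to handle $a_r,b_r\in\E$, and by linearity, eigenvectors $u,v$ with $\a(u)=\l u$, $\a(v)=\mu v$ for $\l,\mu\in\bbT$. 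In that case
\[
\tfrac{1}{N}\sum_{n=1}^N \a^n(u)\a^{2n}(v) = \Bigl(\tfrac{1}{N}\sum_{n=1}^N (\l\mu^2)^n\Bigr) uv,
\]
which converges to $uv$ or $0$ depending on whether $\l\mu^2 = 1$. Combining all pieces gives convergence of \eqref{eq:convergence}.

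The main technical obstacle is the van der Corput computation: in the non-commutative setting one must be careful that the rearrangement isolating the $n$-dependent factor $\a^{-n}(x^*\a^h(x))$ from the fixed ``outer'' element $\a^{2h}(y)y^*$ is valid. The hypothesis of ergodicity is what allows that internal mean to be replaced by a scalar, collapsing the correlation into the product of two traces and thereby permitting direct appeal to the definition of $\M_s$.
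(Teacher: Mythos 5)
Your proposal is substantively the same as the paper's: decompose via the Jacobs--Glicksberg--de~Leeuw splitting $\M = \M_r\oplus\M_s$ from Proposition~\ref{prop:decomposition}, handle the stable part by the Hilbert-space van der Corput lemma combined with ergodicity (which collapses the inner mean $\frac{1}{N}\sum_n\a^{-n}(x^*\a^h(x))$ to the scalar $\tau(x^*\a^h(x))$), and reduce the reversible part to eigenvectors. The key tracial rearrangement $\langle v_n,v_{n+h}\rangle = \tau\bigl(\a^{2h}(y)y^*\,\a^{-n}(x^*\a^h(x))\bigr)$ is exactly the manipulation in the paper.

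Where you diverge, you pay a small price. The paper decomposes only $a$ and leaves $b$ arbitrary: for $a\in\M_s$ the van der Corput bound $\gamma_h = |\tau(a^*\a^h(a))|\cdot|\tau(\a^{2h}(b)b^*)|$ needs only that the first factor be Ces\`aro-small and the second bounded; for $a\in\M_r$ the uniform $L^2$-boundedness of $S_N x := \frac{1}{N}\sum_n\a^n(x)\a^{2n}(b)$ lets one pass directly to an eigenvector $a$ and invoke the mean ergodic theorem for the unitary $\lambda\a^2$. You instead split both $a$ and $b$, producing the extra cross term $(a_r, b_s)$ in which the weak-mixing input must be extracted from $y = b_s$ rather than $x$. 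This forces you to assert, without proof, that $y\in\M_s$ implies $\frac{1}{H}\sum_{h\le H}|\tau(y^*\a^{2h}(y))|\to 0$, i.e.\ that $\M_s$-membership ``persists along the even subsequence.'' That claim is true, but it is not immediate from the definition of $\M_s$: one needs the observation that any eigenvector of $U_\a^2$ with eigenvalue $\nu^2$ decomposes as $v = v_+ + v_-$ with $U_\a v_\pm = \pm\nu v_\pm$, so that $L^2_r(\tau)$ for $U_\a$ and for $U_\a^2$ coincide (equivalently, that the pushforward under $z\mapsto z^2$ of a continuous spectral measure on $\bbT$ is continuous). You should supply that argument; the paper's organization of the case analysis avoids the need for it entirely. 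Your Kaplansky-density treatment of the $(a_r,b_r)$ term is also sound but heavier than the paper's one-line reduction.
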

\begin{proof} By the above proposition, it suffices to
assume that $a$ and $b$ each belong to $\M_r$ or $\M_s$.
Suppose first that $a\in \M_r$, and fix $b$. The operators $S_N$
given by
$$
S_N x=\frac{1}{N} \sum_{n=1}^N \a^n (x)\, \a^{2n} (b)
$$
are linear and bounded on $\M$ for the norm $\|\cdot\|_{L^2(\tau)}$, so
we may assume that $\a (a)=\lambda a$ for some $\lambda\in\bbT$.
Then $S_Na=\frac{1}{N+1} \sum_{n=0}^N a (\lambda\a^2)^n (b)$ which
converges in $L^2(\tau)$ by the mean ergodic theorem.

Suppose now that $a\in \M_s$. We show that the desired limit is
zero. Consider $u_n:=\a^n (a)\, \a^{2n} (b)\, \hat 1$ and observe
that
\begin{eqnarray*}
 \langle u_n, u_{n+j} \rangle &=& \tau (\a^{2n} (b^*)\, \a^{n} (a^*)\, \a^{n+j} (a)\, \a^{2n+2j}
 (b))\\
  &=& \tau(\a^{n} (b^*)\,a^*\, \a^{j} (a)\, \a^{n+2j} (b))
  = \tau (a^*\,\a^{j} (a)\, \a^{n} (\a^{2j}(b)\, b^*)).
\end{eqnarray*}
The ergodicity of the system implies
\begin{eqnarray*}
 \gamma_j&:=&\lim_{N\to \infty} \left|\frac{1}{N} \sum_{n=1}^N  \langle u_n, u_{n+j} \rangle \right|\\
  &=&  \left|\tau \left(a^*\,\a^{j} (a) \lim_{N\to \infty}
  \frac{1}{N} \sum_{n=1}^N
      \a^{n} (\a^{2j}(b)\, b^*)\right)\right|
  = |\tau (a^*\, \a^{j} (a))|\cdot |\tau(\a^{2j}(b)\, b^*)|.
\end{eqnarray*}
Since $a\in \M_s$ and $\tau(\a^{2j}(b)\, b^*)$  are bounded in
$j$, we have
$$
 \lim_{N\to \infty} \frac{1}{N} \sum_{j=1}^N  \gamma_j =0,
$$
and therefore by the classical van der Corput lemma for Hilbert
spaces (see e.g. \cite{Fur77} or \cite{Ber87}), we have
$\lim_{N\to \infty} \frac{1}{N} \sum_{n=1}^N u_n=0$.
\end{proof}

\begin{remarks}
\begin{enumerate}
\item For compact non-ergodic systems the averages
(\ref{eq:convergence}) converge as well, since $\M=\M_r$ in this
case; this was also observed in \cite{BeyDuvStr07}.

\item As in the commutative case we see that the Kronecker subalgebra
$\M_r$ is characteristic for (\ref{eq:convergence}), i.e., the
limit of the averages in (\ref{eq:convergence}) does not change if
replacing $a$ by $E_{\M_r}a$ and $b$ by $E_{\M_r}b$. \fin
\end{enumerate}
\end{remarks}

As was shown in Corollary \ref{negns}, one cannot expect the limit
$$
\lim_{N\to \infty} \frac{1}{N} \sum_{n=1}^N \tau(a \a^n (a)
\a^{2n} (a))
$$
to be positive for every positive $a$. However, a modification
extending \cite[Theorem 5.13]{BeyDuvStr07} is still true.

\begin{prop}
For an ergodic von Neumann system $(\M,\tau,\a)$, one has
$$
 \liminf_{N\to \infty} \frac{1}{N} \sum_{n=1}^N (\Re \tau(a\, \a^n (a) \, \a^{2n}
 (a)))_+ >0
$$
for every $0<a\in\M$.  In particular, one has recurrence on a
dense set.
\end{prop}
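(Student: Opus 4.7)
The strategy is to decompose $a = b + a_s$ using Proposition \ref{prop:decomposition}, with $b := E_{\M_r}(a) \geq 0$ (positivity of the conditional expectation), $\tau(b) = \tau(a) > 0$ and hence $\tau(b^3) > 0$, and $a_s := a - b \in \M_s$ self-adjoint. The positive-part lower bound for $a$ will be deduced from one for $b$ via the elementary inequality $(x+y)_+ \geq x_+ - |y|$ for real $x,y$, at the price of showing $\frac{1}{N}\sum_n |R_n| \to 0$ for the error $R_n := \tau(a\a^n(a)\a^{2n}(a)) - \tau(b\a^n(b)\a^{2n}(b))$.

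For the Kronecker piece, since $b \in L^2_r(\tau)$ the orbit $\{\a^n(b)\}$ is norm-precompact, so the near-return set $E_\eps := \{n: \|\a^n(b) - b\|_{L^2(\tau)} < \eps\}$ is syndetic with positive lower density $\delta_\eps > 0$. A routine H\"older estimate gives $|\tau(b\a^n(b)\a^{2n}(b)) - \tau(b^3)| \leq C\eps$ for $n \in E_\eps$, so for $\eps$ small enough one has $\Re\tau(b\a^n(b)\a^{2n}(b)) \geq \tau(b^3)/2$ for all $n \in E_\eps$, yielding $\liminf_N \frac{1}{N}\sum_n (\Re\tau(b\a^n(b)\a^{2n}(b)))_+ \geq \delta_\eps \tau(b^3)/2$. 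To handle $R_n$, one expands it into its seven $a_s$-cross-terms. The three terms with exactly one $a_s$ vanish identically: each has the form $\tau(a_s Y_n)$ with $Y_n \in \M_r$ (using $\a$-invariance of $\M_r$) and $a_s \perp \M_r$ in $L^2(\tau)$. For each of the three terms with two $a_s$'s and one $b$, one spectrally decomposes $b = \sum_\l b_\l$ into $\a$-eigenvectors; after cycling the trace and absorbing each unit-modulus eigenvalue factor $\l^{\pm n}$ as a phase, the term reduces to $|\tau(c_\l \a^{mn}(a_s))|$ for some fixed $c_\l \in \M$ and $m \in \{1,2\}$, whose Cesaro absolute-value average vanishes by the defining property of $\M_s$; truncating the infinite spectral sum at level $M$ introduces an error bounded uniformly in $n$ by $\|b - b_{\leq M}\|_{L^2}$, which vanishes as $M \to \infty$.

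The principal technical obstacle is the remaining pure term $T_n := \tau(a_s\a^n(a_s)\a^{2n}(a_s))$, in which no $\a$-eigenvector appears to be exploited. The plan is to adapt the van der Corput argument from the proof of the preceding proposition to the scalar sequence $(T_n)$: one computes the correlations $T_n\overline{T_{n+h}}$, expresses them as multilinear traces in shifts of $a_s$, and expects iterated use of the $\M_s$-mixing property to give $\frac{1}{N}\sum_n|T_n|^2 \to 0$, whence $\frac{1}{N}\sum_n |T_n| \to 0$ by Cauchy--Schwarz. Combining the three steps yields $\liminf_N \frac{1}{N}\sum_n (\Re\tau(a\a^n(a)\a^{2n}(a)))_+ \geq \delta_\eps \tau(b^3)/2 > 0$; recurrence on a dense set follows immediately since the integrand is uniformly bounded in $n$.
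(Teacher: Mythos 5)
You correctly identify the three main ingredients -- the Jacobs--Glicksberg--de Leeuw decomposition $a=b+a_s$ with $b=E_{\M_r}(a)>0$, syndetic near-recurrence of the Kronecker part via $\|\a^n(b)-b\|_{L^2(\tau)}<\eps$, and vanishing of the single-$a_s$ cross terms by orthogonality -- and these match the paper's argument. The gap is in your treatment of the pure mixing term $T_n := \tau(a_s\,\a^n(a_s)\,\a^{2n}(a_s))$, and this is the crux of the whole proof.

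Applying van der Corput directly to the scalar sequence $(T_n)$ would yield $\frac{1}{N}\sum_n T_n \to 0$, not $\frac{1}{N}\sum_n |T_n|^2\to 0$. Those are different statements, and the latter is what your reduction via $(x+y)_+\geq x_+-|y|$ actually requires (through Cauchy--Schwarz). To get the mean-square, the natural route is a tensor-square trick -- studying $a_s\otimes a_s^{*}$ inside $\M\otimes\M^{\mathrm{op}}$ -- but $\a\otimes\a$ is generally not ergodic even when $\a$ is (it has eigenvalue ratios $\lambda\bar\mu$ as fixed points), so the preceding convergence proposition (whose van der Corput step explicitly invokes ergodicity to evaluate $\lim_N\frac{1}{N}\sum_n\a^n(\a^{2j}(b)b^*)$) does not apply off the shelf to the tensor square. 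The ``expects iterated use of the $\M_s$-mixing property'' step is therefore not a proof, and making it one would cost substantially more than the statement is worth.

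The paper's proof sidesteps the issue entirely with a weighting trick you do not use: one inserts a nonnegative Bohr weight $f(g^n)$ supported on the syndetic near-return set of $b$, with $g\in\bbT^k$ built from the eigenvalues occurring in an approximation of $b$. One then shows, for each character $\g$ of $\bbT^k$, that $\frac{1}{N}\sum_n \g(g^n)\a^n(x)\a^{2n}(y)\hat 1\to 0$ in $L^2(\tau)$ whenever at least one of $x,y$ lies in $\M_s$ (a van der Corput argument where the character is harmlessly absorbed), and hence that the \emph{weighted} Ces\`aro averages $\frac{1}{N}\sum_n f(g^n)\tau(\hbox{cross term})\to 0$ by Cauchy--Schwarz -- no absolute values or density statements are needed. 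Because $\Re\tau(b\a^n(b)\a^{2n}(b))>\tau(b^3)/2$ on $\operatorname{supp}f(g^n)$, the inequality $f(g^n)(\Re\tau(a\cdots))_+\geq f(g^n)\Re\tau(a\cdots)$ then transfers positivity from $b$ to $a$ directly. This is the missing idea in your proposal; without it, the pure $\M_s$ term is a genuine obstruction.
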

\begin{proof}
Decompose $a=b+c$ with $b\in \M_r$ and $c\in
\M_s$ as in Proposition \ref{prop:decomposition}, with $b>0$ by
Lemma \ref{lem:condexp}.
We first show that there exists a compact abelian group $G$, an
open set $U\subset G$ and $g\in G$ such that for the $1$-step Bohr
set $K_U:=\{n\in \bbN: g^n \in U\}$ one has
\begin{equation}\label{eq:positivity-compact}
\Re\tau(b\, \a^n (b)\, \a^{2n} (b)) > \frac{\tau (b^3)}{2}>0 \quad
\text{for every } n\in K_U.
\end{equation}

Take $\eps:=\frac{\tau (b^3)}{18\|b\|^2}$. Since $b\in \M_r$, we
find $k\in \bbN$, $\lambda_1,\ldots \lambda_k\in\bbT$ and
$b_1,\ldots, b_k\in \M\setminus\{0\}$ such that $\a (b_j)=\lambda_j
b_j$ for every $j=1,\ldots, k$ and $\|b-(b_1+\ldots
+b_k)\|_{L^2(\tau)}<\eps$. Set now $G:=\bbT^k$, $g:=(\lambda_1,
\ldots, \lambda_k)$ and $U:=U_{\eps/(k\max\|b_j\|)}(1)\subset
\bbT^k$. Observe that for every $n$ such that $g^n\in U$, we have
$|\lambda_j^n-1|<\varepsilon/(k\max\|b_j\|)$ for every $j=1,\ldots,
k$ and therefore
\begin{eqnarray*}
\|\a^n (b) - b\|_{L^2(\tau)}&\leq& \|a^n (b_1+\ldots+b_k) -
(b_1+\ldots+b_k)\|_{L^2(\tau)} \\
&&\quad\quad +2\|b_1+\ldots+b_k - b\|_{L^2(\tau)}\\ &\leq&
\max\|b_j\|_{L^2(\tau)} (|\lambda_1^n -1|+\ldots + |\lambda_k^n
-1|) + 2\eps \\
&<& \max\|b_j\| \frac{k\eps}{k\max\|b_j\|} + 2\eps = 3 \eps.
\end{eqnarray*}
So we have by the Cauchy-Schwarz inequality
\begin{eqnarray*}
|\tau(b\, \a^n (b)\, \a^{2n} (b)) - \tau(b^3)| &\leq& |\tau(b\, \a^n (b)\, (\a^{2n}(b)-b))|+
|\tau(b (\a^n (b)-b) b )|\\
&\leq& \|b\|^2 (\|\a^{2n}(b)-b\|_{L^2(\tau)}+\|\a^{n}(b)-b\|_{L^2(\tau)})\\
&\leq& 3  \|b\|^2 \|\a^{n}(b)-b\|_{L^2(\tau)} \, <\, 9 \|b\|^2 \eps=
\frac{\tau (b^3)}{2},
\end{eqnarray*}
and (\ref{eq:positivity-compact}) is proved.

Take now $V:= U_{\eps/(2k\max\|b_j\|)}(1)\subset U$ and a
continuous function $f:G\to [0,1]$ satisfying $\one_V\leq f \leq
\one_U$. Then by (\ref{eq:positivity-compact}) $\Re\tau(b\, \a^n
(b)\, \a^{2n} (b))$ is positive whenever $f(g^n)\neq 0$ and
therefore
$$
  \liminf_{N\to \infty} \frac{1}{N} \sum_{n=1}^N f(g^n) \Re\tau(b\, \a^n (b)\, \a^{2n} (b)) \geq
  \liminf_{N\to \infty} \frac{1}{N} \sum_{n=1}^N \one_V(g^n) \Re\tau(b\, \a^n (b)\, \a^{2n} (b)).
$$
Since the set $K_{V}:=\{n\in \bbN: g^n \in V\}\subset K_U$ is
syndetic (i.e. has bounded gaps) in $\bbN$, this implies by (\ref{eq:positivity-compact})
\begin{equation}\label{eq:positivity-compact-2}
  \liminf_{N\to \infty} \frac{1}{N} \sum_{n=1}^N f(g^n) \Re\tau(b\, \a^n (b)\, \a^{2n} (b))>0.
\end{equation}

Next, we show that
\begin{equation}\label{eq:positivity-compact-zero}
\|\cdot\|_{L^2(\tau)}-\lim_{N\to \infty} \frac{1}{N} \sum_{n=1}^N f(g^n)
\a^n (b)\,\a^{2n}(c)=0.
\end{equation}
To do this, we first consider a character $\g\in \hat{G}$ and
define
$$
  u_n:=\g (g^n)\a^n (b)\, \a^{2n} (c)\, \hat 1.
$$
We have
\begin{eqnarray*}
  \langle u_n, u_{n+j} \rangle &=& \ol{\g (g^n)} \g (g^{n+j}) \g (\a^{2n} (c^*)\, \a^{n} (b^*)\, \a^{n+j}
  (b)\, \a^{2n+2j} (c)) \\
  &=& \g (g^j) \tau(\a^{n} (c^*)\, b^*\,\a^{j} (b)\, \a^{n+2j} (c))
  = \g (g^j) \tau (b^*\, \a^j (b)\, \a^{n} (\a^{2j}(c)\,
  c^*)).
\end{eqnarray*}
By ergodicity of $\a$,
\begin{eqnarray*}
 \gamma_j&:=&\lim_{N\to \infty} \left|\frac{1}{N} \sum_{n=1}^N  \langle u_n, u_{n+j} \rangle \right|
  = \left|\g (g^j)\tau \left(b^*\, \a^j (b) \lim_{N\to \infty} \frac{1}{N} \sum_{n=1}^N  \a^{n} (\a^{2j}(c)\, c^*)
      \right)\right|\\
  &=& |\tau (b^*\, \a^{j} (b))|\cdot |\tau(\a^{2j}(c)\, c^*)|,
\end{eqnarray*}
and the assumption $c\in \M_s$ implies $\lim_{N\to \infty}
\frac{1}{N} \sum_{j=1}^N \gamma_j=0$. By the van der Corput
estimate we thus have
$$
  \lim_{N\to \infty} \frac{1}{N} \sum_{n=1}^N u_n = \lim_{N\to \infty} \frac{1}{N}
  \sum_{n=1}^N \gamma (g^n)\a^n (b)\, \a^{2n} (c) \hat 1 =0.
$$
Since the characters form a total set in $C(G)$ and the operators
$$
S_Nf:=\frac{1}{N} \sum_{n=1}^N f (g^n)\a^n (b)\, \a^{2n} (c)
$$
are uniformly bounded on $C(G)$,
(\ref{eq:positivity-compact-zero}) is proved. Analogously one also
has
\begin{eqnarray*}
  \|\cdot\|_{L^2(\tau)}-\lim_{N\to \infty} \frac{1}{N} \sum_{n=1}^N f(g^n) \a^n (c)\, \a^{2n}
 (b)
  = \|\cdot\|_{L^2(\tau)}-\lim_{N\to \infty} \frac{1}{N} \sum_{n=1}^N f(g^n) \a^n (c)\,
  \a^{2n}(c) =0.
\end{eqnarray*}
The Cauchy-Schwarz inequality implies now that
\begin{eqnarray*}
\limsup_{N\to \infty} \left|\frac{1}{N} \sum_{n=1}^N f(g^n)
\tau(c\, \a^n (b)\, \a^{2n}
 (c))\right|= \limsup_{N\to \infty}\left|\tau\left(c\, \frac{1}{N} \sum_{n=1}^N f(g^n)\a^n (b)\, \a^{2n}
 (c)\right)\right|\\
 \hfill \leq \|c\|_{L^2(\tau)} \limsup_{N\to \infty} \left\|\frac{1}{N} \sum_{n=1}^N
f(g^n)\a^n (b)\, \a^{2n}
 (c)\right\|_{L^2(\tau)}=0,
\end{eqnarray*}
and analogously for the Ces\`aro sums of $f(g^n) \tau(c\, \a^n
(c)\, \a^{2n} (b))$, $f(g^n) \tau(c\, \a^n (c)\, \a^{2n} (c))$ and
$f(g^n) \tau(b\, \a^n (c)\, \a^{2n} (c))$ while
$$
\tau(c\, \a^n (b)\, \a^{2n} (b))= \tau(b\, \a^n (b)\, \a^{2n} (c))
=\tau(b\, \a^n (c)\, \a^{2n} (b))=0
$$
follows from the orthogonality of $\M_r$ and $\M_s$ and the fact
that $\M_r$ is an $\a$-invariant self-adjoint subalgebra of $\M$.

Combining this with (\ref{eq:positivity-compact-2}), we obtain by
the linearity of $\tau$
\begin{eqnarray*}
\liminf_{N\to \infty} \frac{1}{N} \sum_{n=1}^N (\Re \tau(a\, \a^n (a)
\, \a^{2n} (a)))_+
&\geq& \liminf_{N\to \infty} \frac{1}{N} \sum_{n=1}^N f(g^n)(\Re\tau(a\, \a^n (a) \, \a^{2n} (a)))_+\\
&=&  \liminf_{N\to \infty} \frac{1}{N} \sum_{n=1}^N
f(g^n)(\Re\tau(b\, \a^n (b) \, \a^{2n} (b)))_+ \\
&>&0.
\end{eqnarray*}
\end{proof}


\section{Closing remarks}\label{clos-sec}

We present some remarks concerning Problem \ref{silly}.  By
Theorem \ref{thm:gen-triple-aves}, we have a positive answer to
this question when the invariant algebra $\M^\a$ is trivial.  One
can also extend these arguments to cover the case when the
invariant algebra $\M^\a$ is central by representing $\M$ as a
direct integral over $\M^\a$, see Kadison, Ringrose \cite[Chapter
14]{KadRin97}.

It is clear that if the answer to Problem \ref{sillier} is always
positive, then the same is true for Problem \ref{silly}.  What is
less obvious is that the converse is true; if the answer to
Problem \ref{silly} is always true, then the answer to Problem
\ref{sillier} is always true.  To see this, let $(\M,\tau)$ be a
finite von Neumann algebra with two commuting shifts
$\a_1,\a_2$.  We then form the infinite tensor product $\M^\bbZ :=
\bigotimes_{n \in \bbZ} \M$, which is another finite von Neumann algebra, which contains an embedded copy of $\M$ by
using the $0$ coordinate of $\bbZ$.  Next, let $G$ be the free
abelian group on two generators $e, f$, and let $U$ be the action
of $G$ on $\M^\bbZ$ defined by
$$ U(e) \bigotimes_{n \in \bbZ} a_n := \bigotimes_{n \in \bbZ} \a_1^2 \a_2^{-1} (a_n) $$
and
$$ U(f) \bigotimes_{n \in \bbZ} a_n := \bigotimes_{n \in \bbZ} a_{n-1}$$
for all $a_n \in \M$ with all but finitely many $a_n$ equal to $1$.
If we define a shift $\a'$ to $\M^\bbZ$ by the formula
$$ \a' \bigotimes_{n \in \bbZ} a_n := \bigotimes_{n \in \bbZ} \a_1^{2(n+1)} \a_2^{-n} (a_n)$$
we then observe the identities
$$ \a' U(e) (\a')^{-1} = U(e)$$
and
$$ \a' U(f) (\a')^{-1} = U(fe)$$
(here we use the hypothesis that $\a_1, \a_2$ commute).  Because
of this, we can define a shift $\a$ on the crossed product $\M^\bbZ
\rtimes_U G$ by declaring $\a$ to equal $\a'$ on $\M^\bbZ$, and
$$ \a(e) := e; \quad \a(f) := fe.$$
If $a_1, a_2$ lie in $\M^\bbZ$, we observe that
$$ \a^n( a_1 f^2 ) \a^{2n}( f^{-2} a_2 f ) = (\a')^{n}(a_1) ( (\a')^{2n} U(e)^{-2n} (a_2) ) f.$$
If we assume that $a_1$, $a_2$ in fact lie in $\M$, we can simplify this as
$$ \a_1^{2n} (a_1) \a_2^{2n} (a_2) f.$$
Thus, if we assume Problem \ref{silly} has an affirmative answer
for the system $\M^\bbZ \rtimes_U G$, we see that the averages of
$\a_1^{2n} (a_1) \a_2^{2n} (a_2) f$ (and hence of $\a_1^{2n}
(a_1) \a_2^{2n} (a_2)$) converge for arbitrary $a_1, a_2 \in
\M$; from this one easily deduces (after dividing $n$ into even
and odd classes) that Problem \ref{sillier} has an affirmative
answer for the system $\M$.

In particular, we see that the task of establishing Problem
\ref{silly} in the affirmative for arbitrary von Neumann dynamical
systems is at least as hard as that of achieving convergence for
two commuting shifts in the abelian case, a result first obtained
in \cite{conze-lesigne}.

One can also cover some other (non-ergodic, non-abelian) cases of
Problem \ref{silly} by \emph{ad hoc} methods.  Suppose for
instance that $\M$ is a group von Neumann algebra $LG$, with shift
$\alpha$ given by automorphisms $\alpha_1, \alpha_2: G \to G$ of
the group.  Then one can affirmatively answer Problem \ref{silly}
as follows.  Firstly, by density and linearity we may assume that
$a_1,a_2$ are themselves group elements: $a_1 = g_1 \in  G$, $a_2
= g_2 \in G$.  We then see that the means of $\alpha^n (g_1)
\alpha^{2n} (g_2)$ will converge to zero unless there exists a
group element $g_0$ for which
\begin{equation}\label{goo}
\alpha^n (g_1) \alpha^{2n} (g_2) = g_0
\end{equation}
for all $n$ in a set of positive upper density.  But such sets contain
non-trivial parallelograms $n, n+h, n+k, n+h+k$ for $h,k > 0$.
Applying \eqref{goo} for $n, n+h$ and rearranging, one obtains
$$ \alpha^n ( g_2 \alpha^{2h} (g_2^{-1}) ) = g_1^{-1} \alpha^h (g_1).$$
Similarly, applying \eqref{goo} for $n+k,n+h+k$ one has
$$ \alpha^{n+k} ( g_2 \alpha^{2h} (g_2^{-1}) ) = g_1^{-1} \alpha^h (g_1).$$
Writing $u := g_1^{-1} \alpha^h (g_1)$, one thus has
$$ \alpha^h (g_1) = g_1 u$$
and
$$ \alpha^k (u) = u.$$
If we then write
$$ v := g_1^{-1} \alpha^{hk} (g_1) = u \alpha^h (u) \ldots \alpha^{(k-1) h} (u)$$
we see that
$$ \alpha^{hkn} (g_1) = g_1 v^n$$
for all $n$, and $\alpha (v) = v$.  Thus we have
$$ \alpha^{hkn+j} (g_1) \alpha^{2hkn+2j} (g_2) = \alpha^j( g_1 (\alpha^{2hk} (v))^n \alpha^j (g_2) )$$
for any $n,j$. The means of this in $n$ converge in $L^2(\tau)$ by
the mean ergodic theorem.  Summing over all $0 \leq j < hk$ we
obtain weak convergence, thus answering Problem \ref{silly}
affirmatively in this case.  The same type of argument also lets one
deal with crossed products of abelian systems by groups, in which
the shift acts as an automorphism on the group; we omit the details.

Finally, we remark that the results on asymptotically abelian
systems, while stated for $\bbZ^k$-systems, should in fact be valid
for any commuting action of a general locally compact second
countable (lcsc) abelian group.

\appendix

\section{An application of the van der Corput lemma}\label{duv}

The purpose of this appendix is to establish Theorem \ref{thm:rel-w-m-red}.  Our arguments follow
\cite[Proposition 7.4, Theorem 7.5]{NicStrZsi03} closely (see also \cite[Proposition 4.4]{BeyDuvStr07} for another adaptation of the same argument).  We may normalise $\a_0$ to be the identity.

We induct on $k\geq 2$. When $k=2$ we know from the usual mean
ergodic theorem for von Neumann algebras (see e.g. \cite[Section 9.1]{Kre85}) that
\[\frac{1}{N}\sum_{n=1}^N \a^n(a) \to E_{\M^\a}(a)\quad\quad\hbox{in}\ \|\cdot\|_{L^2(\tau)},\]
and since $\M^\a \subseteq \N$ by the relative weak mixing assumption,
we also have
\[\frac{1}{N}\sum_{n=1}^N \a^n(E_\N(a)) \to E_{\M^\a}(E_\N(a)) =
E_{\M^\a}(a)\quad\quad\hbox{in}\ \|\cdot\|_{L^2(\tau)},\] so
combining these conclusions gives the result.

Now suppose that $k \geq 3$ and that we know the desired
conclusion for any similar family of $\ell < k$ automorphisms.  By
decomposing each $a_i$ as $(a_i - E_\N(a_i)) + E_\N(a_i)$ and
expanding out the expression $\prod_{i=1}^{k-1}\a_i^n(a_i)$, we
find that it suffices to show that for any $i\leq k-1$ we have
\[a_i \perp \N
\quad\quad\Rightarrow\quad\quad\frac{1}{N}\sum_{n=1}^N\prod_{i=1}^{k-1}\a_i^n(a_i)
\to 0\quad\quad\hbox{in}\ \|\cdot\|_{L^2(\tau)};\] let us argue
the case $i=1$, the others following at once by symmetry.

By the Hilbert-space-valued version of the classical van der Corput
estimate (see, for instance, \cite{Fur77} or \cite{Ber87}) this will
follow if we show that
\begin{multline*}
\frac{1}{H}\sum_{h=1}^H\left|\frac{1}{N}\sum_{n=1}^N\Big\langle\prod_{i=1}^{k-1}\a_i^{n+h}(a_i),
\prod_{i=1}^{k-1}\a_i^n(a_i)\Big\rangle_\tau\right|\\
=
\frac{1}{H}\sum_{h=1}^H\left|\frac{1}{N}\sum_{n=1}^N\tau\big(\a_{k-1}^n(\a_{k-1}^h(a_{k-1}^\ast))\cdots
\a_1^n(\a_1^h(a_1^\ast))\cdot \a_1^n(a_1)\cdots \a_{k-1}^n(a_{k-1})\big)\right| \to
0
\end{multline*}
as $N\to\infty$ and then $H\to\infty$.

Let us now set $b_i := \a_i^n(\a_i^h(a_i^\ast))$ and $c_i :=
\a_i^n(\a_i^h(a_i))$ to lighten notation.  Having done so, we now
set ourselves up for applying the asymptotic abelianness property
by observing that
\begin{eqnarray*}
b_{k-1}b_{k-2}b_{k-3}\cdots c_1c_2\cdots &=& (b_{k-2} b_{k-1} b_{k-3}\cdots
c_1c_2\cdots) +
([b_{k-1},b_{k-2}]b_{k-3}\cdots c_1c_2\cdots)\\
&=& (b_{k-2} b_{k-3}b_{k-1}b_{k-4}\cdots c_1c_2\cdots) + (b_{k-2}[b_{k-1},b_{k-3}]b_{k-4}\cdots c_1c_2\cdots )\\
&&\quad\quad\quad\quad + ([b_{k-1},b_{k-2}]b_{k-3}b_{k-4}\cdots
c_1c_2\cdots)\\
&\vdots& \\
&=& b_{k-2}b_{k-3}b_{k-4}\cdots b_1c_1c_2\cdots c_{k-2}(b_{k-1}c_{k-1})\\
&&\quad\quad\quad\quad+\sum_{j=1}^{k-2}x_j[b_{k-1},b_j]y_j +
\sum_{j=1}^{k-2}u_j[b_{k-1},c_j]v_j
\end{eqnarray*}
where each $x_j$, $y_j$, $u_j$ and $v_j$ for $1 \leq j \leq k - 2$
is some product of a subset of the elements $\{b_i,c_i:\ i\leq
k-2\}$.

Importantly, there is some $M>0$ such that $\|x_j\|, \|y_j\|, \|u_j\|, \|v_j\| \leq M$ for
all $j\leq k-2$, and not depending on $n$ or $h$, while on the other
hand for any $j \leq k-2$ we have
\[[b_{k-1},b_j] = [\a_{k-1}^n(\a_{k-1}^h(a_{k-1}^\ast)),\a_j^n(\a_j^h(a_j^\ast))],\]
and hence overall we have
\begin{multline*}
\frac{1}{N}\sum_{n=1}^N\Big\|\sum_{j=1}^{k-2}x_j[b_{k-1},b_j]y_j\Big\|_{L^2(\tau)}
\leq
M^{2}\sum_{j=1}^{k-2}\frac{1}{N}\sum_{n=1}^N\|[b_{k-1},b_j]\|_{L^2(\tau)}\\
=
M^{2}\sum_{j=1}^{k-2}\frac{1}{N}\sum_{n=1}^N\|[\a_{k-1}^h(a_{k-1}^\ast),(\a_{k-1}^{-1}\a_j)^n
(\a_j^h(a_j^\ast))]\|_{L^2(\tau)}
\to 0
\end{multline*}
as $N\to\infty$, by the asymptotic abelianness of $\a_{k-1}^{-1}\a_j$.
The same reasoning applies to the term
$\sum_{j=1}^{k-2}u_j[b_{k-1},c_j]v_j$, and now applies again to show
that in the scalar average of interest to us we may also commute
$b_{k-2}$ from the left-hand-end of our product over to be
immediately on the left of $c_{k-2}$, and then move $b_{k-3}$ to
$c_{k-3}$, and so on.  Overall, this shows that
\begin{eqnarray*}
&&\frac{1}{H}\sum_{h=1}^H\left|\frac{1}{N}\sum_{n=1}^N\tau\big(\a_{k-1}^n(\a_{k-1}^h(a_{k-1}^\ast))\cdots
\a_1^n(\a_1^h(a_1^\ast))\cdot \a_1^n(a_1)\cdots \a_{k-1}^n(a_{k-1})\big)\right|\\
&&\sim
\frac{1}{H}\sum_{h=1}^H\left|\frac{1}{N}\sum_{n=1}^N\tau\big(\a_1^n(\a_1^h(a_1^\ast)a_1)\cdots
\a_{k-1}^n(\a_{k-1}^h(a_{k-1}^\ast)a_{k-1})\big)\right|\\
&&=
\frac{1}{H}\sum_{h=1}^H\left|\frac{1}{N}\sum_{n=1}^N\tau\big(\a_1^h(a_1^\ast)a_1\cdot
(\a_2\a_2^{-1})^n(\a_2^h(a_2^\ast)a_2)\cdots
(\a_{k-1}\a_1^{-1})^n(\a_{k-1}^h(a_{k-1}^\ast)a_{k-1})\big)\right|\\
&&=
\frac{1}{H}\sum_{h=1}^H\left|\tau\Big(\a_1^h(a_1^\ast)a_1\cdot\Big(\frac{1}{N}\sum_{n=1}^N
(\a_2\a_1^{-1})^n(\a_2^h(a_2^\ast)a_2)\cdots
(\a_{k-1}\a_1^{-1})^n(\a_{k-1}^h(a_{k-1}^\ast)a_{k-1})\Big)\Big)\right|
\end{eqnarray*}
as $N\to \infty$ and then $H\to\infty$.  However, now we notice that
the inner average of operators with respect to $N$ here is precisely
of the form hypothesized by the theorem, but involving only the
$k-1$ automorphisms $\a_j\a_1^{-1}$, $j=1,2,\ldots,k-1$, which still
satisfy the necessary hypotheses of relative weak mixing and asymptotic
abelianness. Hence this operator average asymptotically agrees with
\begin{multline*}
\frac{1}{H}\sum_{h=1}^H\left|\tau\Big(\a_1^h(a_1^\ast)a_1\cdot\Big(\frac{1}{N}\sum_{n=1}^N
(\a_2\a_1^{-1})^n(E_\N(\a_2^h(a_2^\ast)a_2))\cdots
(\a_{k-1}\a_1^{-1})^n(E_\N(\a_{k-1}^h(a_{k-1}^\ast)a_{k-1}))\Big)\Big)\right|\\
=
\frac{1}{H}\sum_{h=1}^H\left|\tau\Big(E_\N(\a_1^h(a_1^\ast)a_1)\cdot\Big(\frac{1}{N}\sum_{n=1}^N
(\a_2\a_1^{-1})^n(E_\N(\a_2^h(a_2^\ast)a_2))\cdots
(\a_{k-1}\a_1^{-1})^n(E_\N(\a_{k-1}^h(a_{k-1}^\ast)a_{k-1}))\Big)\Big)\right|,
\end{multline*}
where the second equality holds because the operator average in the
inner brackets now lies in $\N$, and so we apply the usual
identity for conditional expectations $\tau(aE_\N(b)) =
\tau(E_\N(aE_\N(b))) = \tau(E_\N(a)E_\N(b))$.

Writing
\[s_N := \frac{1}{N}\sum_{n=1}^N
(\a_2\a_1^{-1})^n(E_\N(\a_2^h(a_2^\ast)a_2))\cdots
(\a_{k-1}\a_1^{-1})^n(E_\N(\a_{k-1}^h(a_{k-1}^\ast)a_{k-1})),\] we
see that $\|s_N\| \leq C$ for some fixed $C$ and all $N \in \bbN$,
and now combining this bound with the Cauchy-Schwarz inequality we
obtain
\begin{eqnarray*}
\frac{1}{H}\sum_{h=1}^H\left|\tau(E_\N(\a_1^h(a_1^\ast)a_1)\cdot
s_n)\right|
&=& \frac{1}{H}\sum_{h=1}^H\big|\big\langle
s_n^*\hat 1, (E_\N(\a_1^h(a_1^\ast)a_1)\hat 1\big\rangle_{L^2(\tau)}\big|\\
&\leq&
\frac{1}{H}\sum_{h=1}^H C\cdot \|E_\N(\a_1^h(a_1^\ast)a_1\|_{L^2(\tau)}.
\end{eqnarray*}
Finally, it follows that this tends to $0$ as $H\to\infty$ by the
our assumption that $a_1 \perp \N$ and the relative weak mixing
hypothesis.  This completes the proof of
Theorem~\ref{thm:rel-w-m-red}.

\section{A group theory construction}\label{groupthy}

The purpose of this appendix is to explicitly describe a certain
type of group, which we shall term a \emph{square group}, generated
by relations involving quadruples of generators.  In particular, we
will be able to solve the equality problem for such groups.  Our
arguments here are motivated by an observation of Grothendieck that
groups can be identified with the sheaf of their flat connections on
simplicial complexes, and experts will be able to detect the ideas
of sheaf theory lurking beneath the surface of the material here,
although we will not use that theory explicitly.

\begin{definition}[Square groups]  A \emph{square base} $\Box = ( H \cup V, \Box )$ consists of the following data:
\begin{itemize}
\item A set $H \cup V$ of generators, partitioned into a subset
$H$ of \emph{horizontal generators} and a subset $V$ of
\emph{vertical generators}; \item A set $\Box \subset (H \times V
\times H \times V) \cup (V \times H \times V \times H)$ of
quadruples $(e_0,e_1,e_2,e_3)$ of alternating orientation (thus if
$e_0$ is horizontal then $e_1$ must be vertical, and so forth).
\end{itemize}
Furthermore, we require the following two axioms on the set $\Box$:
\begin{itemize}
\item (Cyclic symmetry) If $(e_0,e_1,e_2,e_3) \in \Box$, then
$(e_1,e_2,e_3,e_0) \in \Box$.
\item (Unique continuation) If $e_0,
e_1 \in H \cup V$, then there is at most one quadruple
$(e_0,e_1,e_2,e_3) \in \Box$ with the first two components $e_0$
and $e_1$.
\end{itemize}
If $\Box$ is a square base, we define the \emph{square group}
$G_\Box$ associated to that base to be the group generated by the
generators $H \cup V$, subject to the relations $e_0 e_1 e_2 e_3 =
\id$ for all $(e_0,e_1,e_2,e_3) \in \Box$.  We define the
\emph{alphabet} of the square base (or square group) to be the set
$H \cup V \cup H^{-1} \cup V^{-1}$ consisting of the horizontal
and vertical generators and their formal inverses.
\end{definition}

To describe square groups explicitly, we shall need some notation
of a combinatorial and geometric nature.  Let $\bbN :=
\{0,1,2,\ldots\}$ denote the natural numbers.

\begin{definition}[Monotone paths and regions]  A \emph{monotone path} is a finite path in the discrete
quadrant $\bbN^2$ from $(0,0)$ to some endpoint $(n,m)$ that
consists only of rightward edges $(i,j) \to (i+1,j)$ and upward
edges $(i,j) \to (i,j+1)$ (in particular, the path will have
length $n+m$).  Given a monotone path $\gamma$ from $(0,0)$ to
$(n,m)$, the \emph{shadow} of $\gamma$ is defined to be all the
pairs $(i,j) \in \bbN^2$ such that $(i,j') \in \gamma$ for some
$j' \geq j$.  We say that one monotone path $\gamma'$ \emph{lies
above} another monotone path $\gamma$ with the same endpoint
$(n,m)$ if the shadow of $\gamma'$ contains the shadow of
$\gamma$.  In such cases, we refer to the set-theoretic difference
between the two shadows as a \emph{monotone region} from $(0,0)$
to $(n,m)$, with $\gamma'$ and $\gamma$ referred to as the
\emph{upper boundary} and \emph{lower boundary} of the region
respectively.
\end{definition}

\begin{figure}[tb] \centerline{\includegraphics{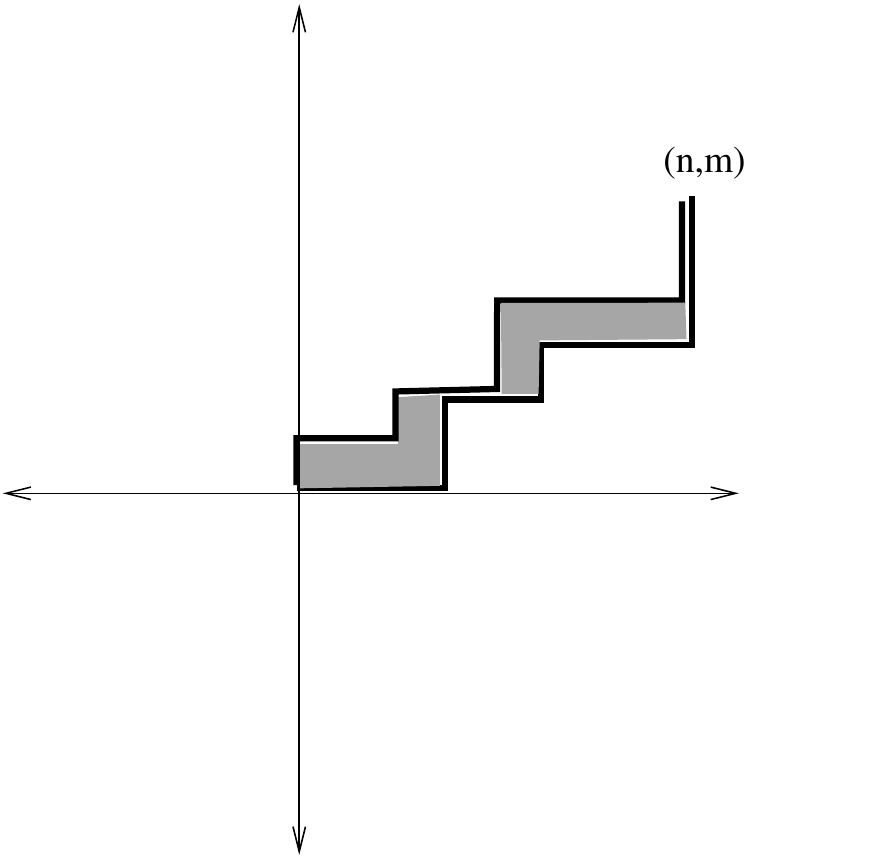}}
\caption{A monotone region, bounded above and below by two monotone
paths.  Note the horizontal and vertical convexity of the monotone
region.} \label{monotone-fig}
\end{figure}

We will also consider a monotone path as a degenerate example of a
monotone region. Monotone regions are horizontally and vertically
convex: if two endpoints of a horizontal or vertical line segment in
$\bbN^2$ lie in a monotone region, then the interior of that segment
does also.

\begin{definition}[Flat connections]\label{flatcon}  Fix a square base $\Box$, and let $\Omega \subset \bbN^2$
be a set.  A \emph{connection} $\Gamma$ on $\Omega$ is an
assignment $\Gamma((i,j) \to (i+1,j)) \in H \cup H^{-1}$ of a
horizontal element of the alphabet to every horizontal edge
$(i,j), (i+1,j) \in \Omega$, and an assignment $\Gamma((i,j) \to
(i,j+1)) \in V \cup V^{-1}$ of a vertical element of the alphabet
to every vertical edge $(i,j) \mapsto (i,j+1) \in \Omega$.  We
adopt the convention that $\Gamma((i+1,j) \to (i,j)) :=
\Gamma((i,j) \to (i+1,j))^{-1}$ and $\Gamma((i,j+1) \to (i,j)) :=
\Gamma((i,j) \to (i,j+1))^{-1}$, where $(e^{-1})^{-1} := e$ for $e
\in H \cup V$ of course.

We say that the connection $\Gamma$ is \emph{flat} if for every
square $(i,j), (i+1,j), (i,j+1), (i+1,j+1)$ in $\Omega$, there
exists an oriented loop $f_0, f_1, f_2, f_3$ of horizontal and
vertical edges around the square (in either orientation) such that
$(\Gamma(f_0), \Gamma(f_1), \Gamma(f_2), \Gamma(f_3)) \in \Box$.

A flat connection on a monotone region from $(0,0)$ to $(n,m)$ is
said to be \emph{maximal} if it cannot be extended to any strictly
larger monotone region with the same endpoints.  It is
\emph{reduced} if there does not exist a triple $(i,j), (i+1,j),
(i+2,j)$ or $(i,j), (i,j+1), (i,j+2)$ in $\Omega$ such that
$\Gamma( (i,j) \to (i+1,j) ) \Gamma( (i+1,j) \to (i+2,j) ) = \id$
or $\Gamma( (i,j+1) \to (i,j) ) \Gamma( (i,j+1) \to (i,j+2) ) =
\id$.
\end{definition}

\begin{figure}[tb]
\centerline{\includegraphics{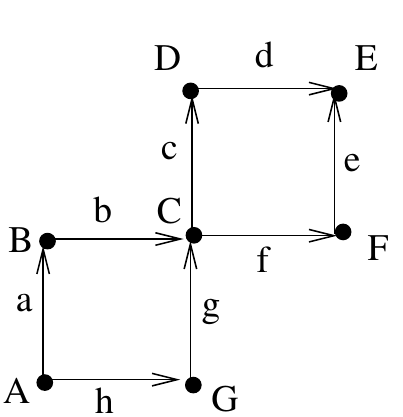}} \caption{A monotone
region $\{A,B,C,D,E,F,G\}$ (with $A=(0,0)$, $B=(0,1)$, etc.) with a
connection $\Gamma$ defined by the group elements $a,b,c,d,e,f,g,h
\in G_\Box$, thus for instance $\Gamma(B \to C) = b$ and $\Gamma(C
\to B) = b^{-1}$.  If for instance $(a,b,g^{-1},h^{-1})$ and
$(f,e,d^{-1},c^{-1})$ are in $\Box$, then this connection is flat.}
\label{connection-fig}
\end{figure}

In the degenerate case when $\Omega$ is just a monotone path,
every connection is automatically flat, as there are no squares.

Let $\Gamma$ be a flat connection on a monotone region $\Omega$.
Then one can \emph{integrate} this connection to produce a map
$\Phi_\Gamma: \Omega \to G_\Box$ by setting $\Phi_\Gamma(0,0) :=
\id$ and $\Phi_\Gamma( v ) = \Phi_\Gamma(u) \Gamma(u \to v)$ for
all horizontal and vertical edges $(u \to v)$ in $\Omega$.  From
the flatness of $\Gamma$ and the ``connected'' nature of $\Omega$
it is easy to see that $\Phi_\Gamma$ exists and is unique.  In
particular, we can define the \emph{definite integral} $|\Gamma|$
of $\Gamma$ to be the group element $|\Gamma| :=
\Phi_\Gamma(n,m)$, where $(n,m)$ is the endpoint of $\Omega$.

\begin{example}  The definite integral of the flat connection in Figure \ref{connection-fig} is equal
to $abcd = abfe = hgcd = hgfe$. \fin
\end{example}

Observe that every group element $g$ in $G_\Box$ can arise as a
definite integral of some flat connection, simply by expressing
$g$ as a word in the alphabet $H \cup V \cup H^{-1} \cup V^{-1}$,
and creating an associated monotone path and connection for that
word.  Later on we shall see that the definite integral will
provide a one-to-one correspondence between group elements and
maximal reduced flat connections (Corollary \ref{maxred}).

We have the following fundamental facts:

\begin{lemma}\label{unimax} Let $\Box$ be a square base, and let $(n,m) \in \bbN^2$.
\begin{itemize}
\item (Unique continuation) If $\Omega$ is a monotone region from
$(0,0)$ to $(n,m)$, and $\gamma$ is a path from $(0,0)$ to $(n,m)$
in $\Omega$, then any flat connection on $\Omega$ is uniquely
determined by its restriction to $\gamma$. In other words, if
$\Gamma, \Gamma'$ are two flat connections on $\Omega$ that agree
on $\gamma$, then they agree on all of $\Omega$. \item
(Maximality)  If $\Omega_0$ is a monotone region from $(0,0)$ to
$(n,m)$, and $\Gamma$ is a flat connection on $\Omega_0$, then
there exists a unique extension of $\Gamma$ to a maximal flat
connection on a monotone region $\Omega$ from $(0,0)$ to $(n,m)$
containing $\Omega_0$.
\end{itemize}
\end{lemma}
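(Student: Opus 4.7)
The plan is to prove both parts by a common device: ``square-by-square propagation'' of values across $\Omega$, fueled by the flatness condition and the unique continuation axiom of $\Box$. For part (i), suppose two flat connections $\Gamma_1,\Gamma_2$ on $\Omega$ agree on $\gamma$. I would transform $\gamma$ into the upper boundary $\gamma_+$ of $\Omega$ via a sequence of ``square moves'': each move exchanges a pair of consecutive edges of the current path (meeting at a corner) for the complementary pair of edges of a unit square $S \subseteq \Omega$ sitting against that corner. Standard combinatorics of monotone regions shows that any two monotone paths with the same endpoints in $\Omega$ are connected by such a chain of square moves within $\Omega$, so iterating from $\gamma$ up to $\gamma_+$ (and then symmetrically down from $\gamma$ to the lower boundary $\gamma_-$) exhausts every edge of $\Omega$. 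At each move I would apply the flatness of $\Gamma_1,\Gamma_2$ on $S$ together with the unique continuation axiom on $\Box$ to conclude that the two new edge values are forced by the two old ones, so agreement transfers from the old path to the new one.

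For part (ii), I would build the desired extension by the analogous greedy procedure: starting from $(\Omega_0, \Gamma)$, repeatedly adjoin any unit square $S \subseteq [0,n]\times[0,m]$ such that $S$ meets the current region $\Omega$ in two consecutive edges, the new set $\Omega \cup S$ remains a monotone region with endpoints $(0,0),(n,m)$, and $\Box$ admits a (necessarily unique) completion of the corresponding quadruple. Since $[0,n]\times[0,m]$ is finite, this process terminates in some $(\Omega^\ast,\Gamma^\ast)$, which is by construction a flat extension that cannot be enlarged. To see that $(\Omega^\ast,\Gamma^\ast)$ is independent of the order in which squares are adjoined, and hence is unique, I would argue as follows: if $(\Omega^{\ast\ast},\Gamma^{\ast\ast})$ is any other maximal flat extension, then $\Omega^\ast \cap \Omega^{\ast\ast}$ is a monotone region containing $\Omega_0$ and hence a monotone path from $(0,0)$ to $(n,m)$, so by part (i) the two extensions agree on this intersection. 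A minimal unit square of $\Omega^\ast$ not lying in $\Omega^{\ast\ast}$ would then present, against a boundary of $\Omega^{\ast\ast}$, a pair of consecutive edges whose common values already admit the flat completion inherited from $\Gamma^\ast$, contradicting the maximality of $\Omega^{\ast\ast}$.

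The main obstacle, felt in both parts, is the mismatch between the alphabet $H \cup V \cup H^{-1} \cup V^{-1}$ of connection values and the unique continuation axiom, which only governs quadruples whose entries lie in $H \cup V$. To bridge this, I would single out the following structural fact about a single flat unit square carrying horizontal positive-direction values $h_1,h_2$ and vertical positive-direction values $v_1,v_2$: up to the cyclic symmetry of $\Box$, there are exactly two admissible sign patterns, either $(h_1,v_1) \in H \times V$ with $h_2 \in H^{-1}$ and $v_2 \in V^{-1}$, or $(h_1,v_1) \in H^{-1} \times V^{-1}$ with $h_2 \in H$ and $v_2 \in V$; the remaining mixed patterns admit no quadruple of $\Box$ around the square and so are excluded by flatness, as one checks by enumerating the eight oriented loops around the square. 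Consequently, whenever two adjacent edges of $S$ are known, their values can always be read, in some orientation of the loop around $S$, as the first two components of a quadruple in $H \times V \times H \times V \subseteq \Box$, and the axiom then pins down the remaining two components and hence the values on the remaining two edges of $S$.
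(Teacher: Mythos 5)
Your proof follows essentially the same strategy as the paper, with a useful additional detail in one place and a mild detour in another. For part (i), the ``square move'' propagation is exactly the paper's ``popping'' argument, and your third paragraph spells out the sign-parity case analysis (that only two orientation classes of a flat square are compatible with $\Box \subset (H\times V\times H\times V)\cup(V\times H\times V\times H)$, so knowing two adjacent edge values pins down the loop orientation and hence, via unique continuation, the other two edges) which the paper dismisses as ``a routine case check'' without elaboration; this is a genuine and correct addition. Both you and the paper invoke, without full proof, the combinatorial fact that any two monotone paths in $\Omega$ with common endpoints are connected by square moves.

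For part (ii) your route is slightly different. The paper argues directly: if $\Gamma$ extends to flat connections on two monotone regions $\Omega$ and $\Omega'$, then by part (i) they agree on $\Omega\cap\Omega'$ and so glue to a flat connection on $\Omega\cup\Omega'$ (using that the union of two monotone regions with the same endpoints is again a monotone region, justified by the $45^\circ$-rotation remark), whence the finitely many extensions form a directed family with a unique maximal element. You instead construct a maximal extension greedily one square at a time, and then show uniqueness by finding, in any putative second maximal extension $\Omega^{\ast\ast}$, a single square of $\Omega^\ast$ that can be adjoined to $\Omega^{\ast\ast}$, contradicting its maximality. This is correct in spirit but rests on an unstated combinatorial lemma (that whenever $\Omega_1\subsetneq\Omega_2$ are monotone regions with the same endpoints, some unit square of $\Omega_2$ meets $\Omega_1$ in two consecutive edges with $\Omega_1\cup S$ still monotone); this lemma is true, but the paper's union-then-greedy argument avoids needing it. Also note a small slip: you write that $\Omega^\ast\cap\Omega^{\ast\ast}$ is ``hence a monotone path'' --- you mean that it \emph{contains} a monotone path from $(0,0)$ to $(n,m)$ on which the two connections agree, which is what part (i) requires.
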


\begin{proof} We first establish unique continuation.  This is best explained visually.  The key observation
is that if two flat connections on a square agree on two adjacent
sides of a square, then they must agree on the whole square.  This
is ultimately a consequence of the unique continuation property of
the square base $\Box$, and can be verified by a routine case check.
Thus, if $\Gamma, \Gamma'$ are two connections on $\Omega$ that
agree on $\gamma$, they also agree on any perturbation of $\gamma$
in $\Omega$ formed by taking an adjacent pair of horizontal and
vertical edges in $\gamma$ and ``popping'' them by replacing them by
the other two edges of the square that they form; note that this
retains the property of being a monotone path.  One can check that
after a sufficient number of upward and downward ``popping''
operations one can cover the upper and lower boundaries of $\Gamma$,
and everything in between, and the claim follows.

\begin{example}  We continue working with Figure \ref{connection-fig}.  Suppose two flat connections
$\Gamma, \Gamma'$ on the indicated region agree on the upper
boundary $ABCDE$, with the indicated connection values $a,b,c,d$.
By unique continuation of $\Box$, the only possible values
available for $\Gamma, \Gamma'$ on the remaining two edges $CF$,
$FE$ of the square $CDEF$ are $f$ and $e$.  Thus we may ``pop''
the upper square and obtain that $\Gamma$, $\Gamma'$ also agree on
the monotone path $ABCFE$.  After popping the lower square also we
obtain that $\Gamma, \Gamma'$ agree on the entire monotone region.
\end{example}

To prove the second claim, we simply observe that if $\Gamma$ can
be extended to two monotone regions $\Omega, \Omega'$ containing
$\Omega_0$, then by unique continuation they agree on the
intersection $\Omega \cap \Omega'$ (which is also a monotone
region), and can thus be glued to form a flat connection on the
union $\Omega \cup \Omega'$ (which is also a monotone
region\footnote{One way to see this is to rotate the plane by $45$
degrees, so that monotone paths become graphs of discrete
Lipschitz functions with Lipschitz constant $1$, and monotone
regions become the regions between two such functions.}).  Since
there are only finitely many monotone regions from $(0,0)$ to
$(n,m)$, the claim then follows from the greedy algorithm.
\end{proof}

Now we need a fundamental definition.

\begin{definition}[Concatenation]\label{concat}  Let $\Gamma$ be a maximal reduced flat connection
on some monotone region $\Omega$ from $(0,0)$ to $(n,m)$, and let
$x \in H \cup V \cup H^{-1} \cup V^{-1}$ be a symbol in the
alphabet.  We define the \emph{concatenation} $\Gamma \cdot x$ of
$\Gamma$ with $x$ to be the maximal flat connection $\Gamma' =
\Gamma \cdot x$ on a monotone region $\Omega'$ from $(0,0)$ to
$(n',m')$ generated by the following rule.
\begin{itemize}
\item (Collapse) If $x$ is horizontal (i.e. $x \in H \cup
H^{-1}$), $(n-1,m)$ lies in $\Omega$, and $\Gamma((n-1,m) \to
(n,m)) = x^{-1}$, then one sets $(n',m') := (n-1,m)$, sets
$\Omega'$ to be the restriction of $\Omega$ to the region $\{
(i,j) \in \bbN^2: i \leq n-1 \}$ (i.e. one deletes the rightmost
column of $\Omega$, and sets $\Gamma'$ to be the restriction of
$\Gamma$ to $\Omega'$. \item (Extension) If $x$ is horizontal, and
either $(n-1,m)$ lies outside of $\Omega$ or $\Gamma((n-1,m) \to
(n,m)) \neq x^{-1}$, then one sets $(n',m') := (n+1,m)$, and
extends $\Gamma$ to $\Omega \cup \{(n+1,m)\}$ by setting
$\Gamma((n,m) \to (n+1,m)) := x$; note that this is still flat
because it does not create any squares.  One then extends $\Gamma$
further by the second part of Lemma \ref{unimax} to create the
maximal flat connection $\Gamma'$ on $\Omega'$ that extends
$\Gamma$. \item If $x$ is vertical instead of horizontal, one
follows the analogue of the above rules but with the roles of $n$
and $m$ reversed.
\end{itemize}
\end{definition}

\begin{example} Imagine one concatenated a horizontal edge $x$ to the flat connection in Figure
\ref{connection-fig}, which we shall assume to be maximal reduced.
If $x$ is not equal to $d^{-1}$, then the concatenated connection
would thus extend one unit to the right of $E$ to the endpoint
$(3,2)$, and may possibly extend also to the square to the right
of $EF$ if there is an appropriate tuple in $\Box$ to achieve this
extension.  If instead $x$ was equal to $d^{-1}$, then the
connection would collapse to the region $\{A,B,C,D,G\}$, so that
the endpoint is now $D=(1,2)$.\fin
\end{example}

The importance of this definition lies in the fact that it gives a representation of $G_\Box$:

\begin{lemma}  Let $\Box$ be a square base, and let $\Gamma$ be a maximal reduced flat connection.
\begin{itemize}
\item (Preservation of reducibility) For any $x \in H \cup V \cup
H^{-1} \cup V^{-1}$, $\Gamma \cdot x$ is reduced.
\item
(Invertibility) For any $x \in H \cup V \cup H^{-1} \cup V^{-1}$,
one has $(\Gamma \cdot x) \cdot x^{-1} = \Gamma$.
\item (Square
relations) For any $(e_0,e_1,e_2,e_3) \in \Box$, one has
$(((\Gamma \cdot e_0) \cdot e_1) \cdot e_2) \cdot e_3 = \Gamma$.
\end{itemize}
In particular, the group $G_\Box$ acts on the space ${\mathcal O}$
of maximal reduced flat connections in a unique manner, sending
$\Gamma$ to $\Gamma \cdot g$ for any $\Gamma \in {\mathcal O}$ and
$g \in G_\Box$.
\end{lemma}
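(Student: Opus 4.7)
My plan is to verify each of the three bullet points by a direct case analysis built on Definition~\ref{concat}, and then to observe that the three statements together are precisely the defining relations of $G_\Box$, so that the map $x \mapsto (\Gamma \mapsto \Gamma \cdot x)$ extends uniquely and consistently from the generators to a right action of the whole group on $\mathcal{O}$.

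For preservation of reducibility, the collapse case is immediate, since $\Gamma\cdot x$ is then just the restriction of the already reduced $\Gamma$ to a smaller monotone region. In the extension case the new edge carrying label $x$, attached at the old endpoint, does not form a cancelling pair with its immediate predecessor in $\Gamma$ precisely because the opposite scenario would have triggered the collapse branch instead. Any subsequent edge added during the maximal extension coming from Lemma~\ref{unimax} that created a cancelling consecutive pair would, by a popping argument along the lines of the unique continuation proof in that lemma, propagate back to a cancelling consecutive pair wholly inside $\Omega$ and inherited from $\Gamma$, contradicting reducedness of $\Gamma$.

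For invertibility I would split into the two cases for how $\Gamma\cdot x$ was formed. If it arose by collapse, then the reducedness of $\Gamma$ guarantees that the label of the edge now sitting at the new endpoint of $\Gamma\cdot x$ is not $x$, so concatenating $x^{-1}$ falls into the extension branch and, by the uniqueness in Lemma~\ref{unimax}, reinstates exactly the deleted column. If $\Gamma\cdot x$ arose by extension then its final edge is $x$, so concatenating $x^{-1}$ triggers a collapse that strips off the entire newly-added column. The technical heart of the argument is then showing that this round trip returns exactly $\Omega$: I would prove that the maximal extension of $\Omega\cup\{(n+1,m)\}$ (or its vertical analogue) cannot add any cell with $i\leq n$, since by the locality of the square relations such a cell would already have been forced into $\Omega$ itself, contradicting the assumed maximality of $\Gamma$ on $\Omega$.

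For the square relations, I would track the moving endpoint as the four concatenations in $(((\Gamma\cdot e_0)\cdot e_1)\cdot e_2)\cdot e_3$ walk once around the unit square prescribed by $(e_0,e_1,e_2,e_3)\in\Box$; the alternating horizontal/vertical orientation built into the definition of $\Box$ ensures the trajectory closes at $(n,m)$. Flatness at this one square together with the unique continuation axiom of $\Box$ forces the first three concatenations into the extension branch and the fourth into the collapse branch, while the same maximality control as in the preceding paragraph keeps the intermediate enlargements inside a bounded neighbourhood of the added square and causes the final collapse to recover $\Omega$ exactly. Once these three properties are in hand, they are exactly the relations in the presentation of $G_\Box$, and standard group-theoretic extension of a generator-level action to the group yields the desired right action on $\mathcal{O}$. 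The principal obstacle throughout the argument is the maximality bookkeeping in the invertibility and square-relation steps, namely ensuring that the greedy extension invoked inside a round trip never leaks into the pre-existing region.
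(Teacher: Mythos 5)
Your treatment of the first two bullets is sound and follows the paper's strategy. For preservation of reducibility you invoke the right propagation idea, though the paper's proof makes two distinct cases explicit (a vertical degeneracy in the newly added column, handled by a reflection argument, and a horizontal degeneracy at the interface, which needs an additional ``take the largest such $j$'' step); these would need to be fleshed out. For invertibility you have the argument right, and you correctly identify and address a point the paper glosses over with ``it is easy to see'': namely that the maximal extension of $\Omega\cup\{(n+1,m)\}$ cannot add cells in columns $i\leq n$, which follows since restricting any such enlargement to $\{i\leq n\}$ would yield a flat extension of $\Gamma$ on a monotone region to $(n,m)$ strictly containing $\Omega$, contradicting the maximality of $\Gamma$. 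That is a genuine improvement in rigor over the paper's phrasing.

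However, your argument for the square relations has a real error. You assert that flatness and unique continuation force the first three concatenations to be extensions and only the fourth to be a collapse. This cannot be right: each of the four concatenations moves the endpoint by one unit (horizontal for $e_0,e_2$, vertical for $e_1,e_3$), so if the endpoint is to return to $(n,m)$ --- which you yourself require --- then exactly one of $\{e_0,e_2\}$ and one of $\{e_1,e_3\}$ must be an extension, i.e.\ the pattern is always two extensions and two collapses. Indeed, in the simplest configuration (a bare monotone path with nothing below the endpoint, so that $\Gamma\cdot e_0$ is an extension), the maximal extension after $\cdot e_1$ fills in the unit square above $(n,m)$ by unique continuation, and then $\cdot e_2$ and $\cdot e_3$ are both \emph{collapses}; the pattern is $EECC$, not $EEEC$. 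Moreover the pattern is not ``forced'': it depends on whether $\Gamma$ already contains the relevant edges with the relevant labels near $(n,m)$, and the proof must handle all possibilities. The paper deals with this by first using cyclic symmetry and invertibility to reduce the square relation to showing $(\Gamma\cdot e_0)\cdot e_1 = (\Gamma\cdot e_3^{-1})\cdot e_2^{-1}$, and then running a four-way case analysis according to whether each of the two concatenations on the left is an extension or a collapse, each time determining the corresponding behaviour on the right. Your proposal skips this reduction and the case analysis, and the one case it does consider is described incorrectly. As written, the square-relations step does not go through.
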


\begin{proof}  We begin with the preservation of reducibility claim.  If $\Gamma \cdot x$ is formed
by collapsing $\Gamma$, the claim is clear, so suppose instead
that $\Gamma \cdot x$ is formed by extension.  By symmetry we may
assume that $x$ is horizontal.  Let $(n,m)$ denote the endpoint of
$\Gamma$, and let $\Omega'$ be the domain of $\Gamma \cdot x$
(which then has endpoint $(n+1,m)$).

Assume for contradiction that $\Gamma \cdot x$ is not reduced.
Since $\Gamma$ was reduced, there are only two possibilities:
either one has a vertical degeneracy
\begin{equation}\label{verden}
 \Gamma( (n+1,j) \to (n+1,j+1) ) \Gamma( (n+1,j+1) \to (n+1,j+2) ) = \id
\end{equation}
for some $(n+1,j), (n+1,j+1), (n+1,j+2) \in \Omega'$, or else one has a horizontal degeneracy
\begin{equation}\label{horden}
\Gamma( (n-1,j) \to (n,j) ) \Gamma( (n,j) \to (n+1,j) ) = \id
\end{equation}
for some $(n-1,j), (n,j), (n+1,j) \in \Omega'$.

Suppose first that one has a vertical degeneracy \eqref{verden}.
Consider the restrictions $\Gamma_1, \Gamma_2$ of the connection
$\Gamma$ on the adjacent squares $( (n,j), (n,j+1), (n+1,j),
(n+1,j+1) )$ and $( (n,j+1), (n,j+2), (n+1,j+1), (n+1,j+2) )$.  By
construction, $\Gamma_1, \Gamma_2$ agree on their common edge $(
(n,j+1) \to (n+1,j+1) )$, and $\Gamma_1( (n+1,j+1) \to (n+1,j) )$
is equal to $\Gamma_2( (n+1,j+1) \to (n+1,j+2) )$. By the unique
continuation property of $\Box$, this implies that $\Gamma_1$ and
$\Gamma_2$ are reflections of each other, and in particular that
$\Gamma_1( (n,j+1) \to (n,j) )$ is equal to $\Gamma_2( (n,j+1) \to
(n,j+2) )$.  But this implies that $\Gamma$ is not reduced, a
contradiction.

Now suppose instead that one has a horizontal degeneracy
\eqref{horden}.  From Definition \ref{concat} we know that $j$
cannot equal $m$, otherwise we would have collapsed rather than
extended $\Gamma$.  Let $0 \leq j < m$ be the largest $j$ for
which \eqref{horden} holds.  By repeating the argument in the
previous paragraph, we see that the restrictions of $\Gamma$ to
the adjacent squares $\{ (n-1,j), (n,j), (n-1,j+1), (n,j+1) \}$
and $\{ (n,j), (n+1,j), (n,j+1), (n+1,j+1) \}$ are reflections of
each other, which implies that \eqref{horden} also holds for
$j+1$, contradicting the maximality of $j$.  This establishes the
preservation of reducibility.

Now we establish the invertibility.  Again, by symmetry we may
assume that $x$ is horizontal.

If $\Gamma \cdot x$ is a (horizontal) extension of $\Gamma$, then
it is easy to see from Definition \ref{concat} that $(\Gamma \cdot
x) \cdot x^{-1}$ will be the (horizontal) collapse of $\Gamma
\cdot x$, which is $\Gamma$.    Conversely, if $\Gamma \cdot x$ is
the (horizontal) collapse of $\Gamma$, then $(\Gamma \cdot x)
\cdot x^{-1}$ will be the (horizontal) extension (because $\Gamma$
was reduced), which will equal $\Gamma$ again (by uniqueness of
maximal extension).

Finally, we establish the square relations.  From cyclic symmetry
and invertibility we may assume that $e_0, e_2$ are horizontal and
$e_1, e_3$ are vertical.  From invertibility again, it suffices to
show that
$$ (\Gamma \cdot e_0) \cdot e_1 = (\Gamma \cdot e_3^{-1}) \cdot e_2^{-1}$$
for any maximal reduced flat connection $\Gamma$.  We use $(n,m)$ to denote the endpoint of $\Gamma$.

We divide into four cases.  Suppose first that $\Gamma \cdot e_0$
is an extension of $\Gamma$, and that $(\Gamma \cdot e_0) \cdot
e_1$ is an extension of $\Gamma \cdot e_0$.  Then we claim that
$\Gamma \cdot e_3^{-1}$ is an extension of $\Gamma$.  For if this
were not the case, then $\Gamma( (n,m-1) \to (n,m) )$ must equal
$e_3$, but then as $(\Gamma \cdot e_0)((n,m) \to (n+1,m))$ equals
$e_0$ by construction, the domain of $\Gamma \cdot e_0$ must
include the square $(n,m-1), (n,m), (n+1,m-1), (n+1,m)$ with
$(\Gamma\ \cdot e_0)((n+1,m-1) \to (n+1,m)) = e_1^{-1}$,
causing $(\Gamma \cdot e_0) \cdot e_1$ to be a collapse rather than
an extension, a contradiction.  Thus $\Gamma \cdot e_3^{-1}$
extends $\Gamma$.  A similar argument shows that $(\Gamma \cdot
e_3^{-1}) \cdot e_2^{-1}$ extends $\Gamma \cdot e_3^{-1}$
(otherwise $\Gamma( (n-1,m) \to (n,m) )$ would equal $e_0^{-1}$,
causing $\Gamma \cdot e_0$ to be a collapse rather than an
extension).  It is then easy to verify that $(\Gamma \cdot
e_3^{-1}) \cdot e_2^{-1}$ and $(\Gamma \cdot e_0) \cdot e_1$ are
the same (since they glue together to form a flat connection on
$\Gamma$ and on the square $(n,m), (n+1,m), (n,m+1), (n+1,m+1)$).

Now suppose that $\Gamma \cdot e_0$ is an extension of $\Gamma$,
but that $(\Gamma \cdot e_0) \cdot e_1$ is a collapse of $\Gamma
\cdot e_0$.  Arguing as before, we conclude that $\Gamma( (n,m-1)
\to (n,m) )$ equals $e_3$, and so $\Gamma \cdot e_3^{-1}$ is a
collapse of $\Gamma$; similarly, $(\Gamma \cdot e_3^{-1}) \cdot
e_2^{-1}$ cannot be a collapse of $\Gamma \cdot e_3^{-1}$ (this
would force $\Gamma \cdot e_0$ to be a collapse also) and so is an
extension.  It is again easy to verify that $(\Gamma \cdot
e_3^{-1}) \cdot e_2^{-1}$ and $(\Gamma \cdot e_0) \cdot e_1$ are
the same.

The remaining two cases (when $\Gamma \cdot e_0$ is a collapse of
$\Gamma$, and $(\Gamma \cdot e_0) \cdot e_1$ is either an
extension or collapse of $\Gamma \cdot e_0$) are similar to the
preceding two, and are left to the reader.
\end{proof}

This gives us a satisfactory explicit description of a square group:

\begin{corollary}\label{maxred}  Let $\Box$ be a square group.  Then the definite integral map
$\Gamma \mapsto |\Gamma|$ is a bijection from ${\mathcal O}$ to
$G_\Box$; thus every group element has a unique representation as
the definite integral of a maximal reduced flat connection.
\end{corollary}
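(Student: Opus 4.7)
The plan is to exhibit the map $\Psi: G_\Box \to \cal{O}$ sending $g \in G_\Box$ to $\Gamma_0 \cdot g$, where $\Gamma_0$ denotes the trivial (empty) connection on the one-point monotone region $\{(0,0)\}$, and to verify that $\Psi$ and the definite integral $|\cdot|$ are mutually inverse. Note that $\Gamma_0$ is vacuously flat, reduced, and maximal (being the only monotone region from $(0,0)$ to $(0,0)$), so $\Gamma_0 \in \cal{O}$; since the right action of $G_\Box$ on $\cal{O}$ was established in the preceding lemma, $\Psi$ is well-defined.

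The first technical step is the equivariance identity $|\Gamma \cdot x| = |\Gamma| \cdot x$ for any $\Gamma \in \cal{O}$ and alphabet letter $x$. In the extension case of the concatenation one appends an edge labeled $x$ to any monotone path in $\Gamma$ ending at the old endpoint, so the definite integral picks up a factor of $x$ on the right; in the collapse case the edge being removed already carries label $x^{-1}$, which has the same effect on the product. An immediate induction on word length, starting from $|\Gamma_0| = \id$, yields $|\Psi(g)| = g$ for every $g \in G_\Box$. In particular $\Psi$ is injective and $|\cdot|$ is surjective.

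For the reverse identity $\Psi(|\Gamma|) = \Gamma$, I would start from $\Gamma \in \cal{O}$ with endpoint $(n,m)$ and domain $\Omega$, pick any monotone path $\gamma$ from $(0,0)$ to $(n,m)$ lying in $\Omega$ (say the lower boundary), and read off its labels to obtain a word $w = x_1 \cdots x_{n+m}$ with $|\Gamma| = w$. By induction on $i$ I would show that the partial concatenation $\Gamma_i := \Gamma_0 \cdot x_1 \cdots x_i$ agrees with $\Gamma$ wherever their domains overlap, and in particular contains the first $i$ edges of $\gamma$ with the labels inherited from $\Gamma$. Taking $i = n+m$, both $\Gamma_{n+m} = \Psi(|\Gamma|)$ and $\Gamma$ are then maximal flat extensions of the restriction $\Gamma|_\gamma$ sharing the endpoint $(n,m)$, so they must coincide by the uniqueness clause of Lemma~\ref{unimax}.

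The main obstacle lies in the inductive step, where one must verify that the concatenation with $x_i$ does not trigger the collapse branch of the algorithm. The key observation is that a collapse at step $i$ would require the back-edge at the current endpoint $v_{i-1}$ to carry label $x_i^{-1}$; by the inductive hypothesis this label would match the corresponding label in $\Gamma$, producing a collinear triple of horizontal or vertical edges in $\Omega$ whose labels multiply to $\id$, in direct violation of the reducedness of $\Gamma$. Once the collapse branch is ruled out, the appended edge $v_{i-1} \to v_i$ carries the correct label $x_i$, and the subsequent maximal extension is pinned down by flatness, the unique continuation axiom of the square base, and the uniqueness clause of Lemma~\ref{unimax}, forcing it to coincide with the appropriate sub-extension of $\Gamma$. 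This maintains the inductive hypothesis and closes the argument.
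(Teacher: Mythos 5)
Your argument is correct and follows essentially the same route as the paper's: both rest on the identity $\Gamma = \Gamma_0\cdot|\Gamma|$ (with $\Gamma_0$ the trivial connection on $\{(0,0)\}$), proved by induction along a monotone path in the domain of $\Gamma$, with the reducedness of $\Gamma$ ruling out the collapse branch of each concatenation and the maximality of $\Gamma$ together with the uniqueness clause of Lemma~\ref{unimax} pinning each partial concatenation down as the restriction of $\Gamma$ to the corresponding subregion. The equivariance $|\Gamma\cdot x|=|\Gamma|\cdot x$ you extract for surjectivity is a clean formalisation of the observation the paper makes right after Definition~\ref{flatcon}; otherwise the two proofs coincide.
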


\begin{proof}  The surjectivity of this map was already established in the discussion after
Definition \ref{flatcon}, so it suffices to establish the injectivity.  We will establish this via the identity
$$ \Gamma = \emptyset \cdot |\Gamma| $$
for all $\Gamma \in {\mathcal O}$, where $\emptyset$ is the trivial
flat connection over the monotone region $\{(0,0)\}$ from $(0,0)$ to
$(0,0)$.  This identity shows that $\Gamma$ can be reconstructed
from $|\Gamma|$, demonstrating injectivity.

Let $\Omega$ be the domain of $\Gamma$, which by definition is a
monotone region from $(0,0)$ to some point $(n,m)$.  Let $\gamma$
be some monotone path in $\Omega$ from $(0,0)$ to $(n,m)$ (e.g.
one could take $\gamma$ to be the upper or lower boundary of
$\Omega$).  We label the vertices of $\gamma$ in order as $(0,0) =
(i_0,j_0), (i_1,j_1),\ldots,(i_{n+m},j_{n+m}) = (n,m)$.  From
definition of $|\Gamma|$, we see that
$$
|\Gamma| = \Gamma( (i_0,j_0) \to (i_1,j_1) ) \Gamma( (i_1,j_1) \to
(i_2,j_2) ) \ldots \Gamma( (i_{n+m-1},j_{n+m-1}) \to (i_{n+m},
j_{n+m}) ).$$ For each $0 \leq k \leq n+m$, let $\Omega_k$ be the
portion of $\Omega$ which is in the region $\{ (i,j): i \leq i_k,
j \leq j_k \}$, thus $\Omega_k$ is a monotone region from $(0,0)$
to $(i_k,j_k)$ which is increasing in $k$.  Let $\Gamma_k$ be the
restriction of $\Gamma$ to $\Omega_k$.  As $\Gamma$ was maximal
and reduced, each of the $\Gamma_k$ is also.  Since $\Gamma_{n+m}
= \Gamma$, it will suffice to establish that
$$
\Gamma_k = \emptyset \cdot \Gamma( (i_0,j_0) \to (i_1,j_1) )
\Gamma( (i_1,j_1) \to (i_2,j_2) ) \ldots \Gamma( (i_{k-1},j_{k-1})
\to (i_k, j_k) )$$ for all $0 \leq k \leq n+m$.  But this is
easily established by induction (the reduced nature of the
$\Gamma_k$ is necessary to avoid the collapse case in Definition
\ref{concat}).
\end{proof}

As a consequence of this corollary, we can distinguish any two
elements in $G_\Box$ from each other as long as we can express
them as the definite integrals of distinct maximal reduced flat
connections.

\subsection{Applications}

We now specialise the above abstract group-theoretic machinery to
the application at hand.  We begin with a proposition which will
be used to show non-convergence of quadruple recurrence (Theorem
\ref{k4}).

\begin{proposition}[Independence of AP4 relations]\label{apthy}  Let $A \subset \bbZ$ be a (possibly infinite)
set of integers.  Then there exist a group $G$ with elements
$e_0,e_1,e_2,e_3$, together with an automorphism $T: G \to G$, such
that for $r \in \bbN$, the relation
\begin{equation}\label{slop}
e_0 (T^{r} e_1) (T^{2r} e_2) (T^{3r} e_3) = \id
\end{equation}
holds if and only if $r \in A$.  Furthermore, no power $T^k$ of
$T$ with $k \neq 0$ has any fixed points other than the identity
element $id$.
\end{proposition}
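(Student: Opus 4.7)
The plan is to realize $G$ as a square group $G_\Box$ for a suitable square base $\Box$, and to exploit the bijection between group elements and maximal reduced flat connections furnished by Corollary~\ref{maxred}. Introduce horizontal generators $\{g_{j,0}, g_{j,2} : j \in \bbZ\}$ and vertical generators $\{g_{j,1}, g_{j,3} : j \in \bbZ\}$, and let $\Box$ consist of every cyclic rotation of each quadruple
\[
(g_{s,0},\,g_{s+r,1},\,g_{s+2r,2},\,g_{s+3r,3}),\qquad s\in\bbZ,\ r\in A.
\]
Cyclic symmetry is automatic, and unique continuation holds because any admissible leading pair $(e_0,e_1)$ of the required alternating type carries enough information in its two subscripts to recover $s$ and $r$ (and hence the remaining two entries) by solving a pair of linear equations in $\bbZ$. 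Define $e_i := g_{0,i}$ and let $T$ be determined on generators by $T(g_{j,i}) := g_{j+1,i}$; since $\Box$ is invariant under this shift, $T$ extends to an automorphism of $G_\Box$. With these choices the target relation $e_0(T^re_1)(T^{2r}e_2)(T^{3r}e_3)=\id$ becomes the equality $g_{0,0}g_{r,1}g_{2r,2}g_{3r,3}=\id$ in $G_\Box$.

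For $r\in A$ this equality is a defining square relation and so holds trivially. For $r\notin A$ I would compute, via the concatenation procedure of Definition~\ref{concat} starting from $\emptyset$, the maximal reduced flat connection $\Gamma$ corresponding to the word $g_{0,0}g_{r,1}g_{2r,2}g_{3r,3}$. Each of the four concatenation steps is an extension rather than a collapse, because the word's letters alternate between horizontal and vertical and the region stays a monotone path throughout, so the vertex one step back from the endpoint along the new letter's axis is never present. Moreover, no further maximal extension into any adjacent unit square is possible at any stage: completing such a square would require a tuple of $\Box$ whose first two entries are the two edges currently meeting at the corner, but every consecutive pair in our word has the form $(g_{jr,j},g_{(j+1)r,j+1})$, which fits the subscript pattern of any tuple of $\Box$ only with $r\in A$. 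Consequently $\Gamma$ is a nontrivial four-edge L-shaped monotone path from $(0,0)$ to $(2,2)$, and Corollary~\ref{maxred} yields $|\Gamma|=g_{0,0}g_{r,1}g_{2r,2}g_{3r,3}\neq\id$.

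Finally, to establish that no nontrivial power $T^k$ fixes any non-identity element, suppose $T^k(x)=x$ with $k\neq 0$ and let $\Gamma_x$ be the maximal reduced flat connection representing $x$. The shift $T^k$ maps $\Box$ to itself, hence carries $\Gamma_x$ to another maximal reduced flat connection $T^k(\Gamma_x)$ with definite integral $T^k(x)=x$. Uniqueness in Corollary~\ref{maxred} forces $T^k(\Gamma_x)=\Gamma_x$, so every edge label of $\Gamma_x$ coincides with its $T^k$-image. Since the generators $g_{j,i}$ and $g_{j+k,i}$ are distinct as group elements (as witnessed by the evidently distinct single-edge maximal reduced flat connections $\emptyset\cdot g_{j,i}$ and $\emptyset\cdot g_{j+k,i}$), $\Gamma_x$ can have no edges at all, whence $x=\id$.

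The main obstacle is the bookkeeping in the second paragraph: one must rule out, at every stage of the concatenation, both upper and lower pops of the current monotone path into an adjacent unit square, which reduces to verifying that no consecutive pair of letters in the target word can appear as the leading two entries of a tuple of $\Box$ when $r\notin A$, with unique continuation excluding alternative completions.
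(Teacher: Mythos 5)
Your proposal is correct and follows essentially the same approach as the paper: both build $G$ as the square group on the same base $\Box$, invoke Corollary \ref{maxred} to show that for $r\notin A$ the word $g_{0,0}\,g_{r,1}\,g_{2r,2}\,g_{3r,3}$ integrates a nontrivial maximal reduced flat connection (the staircase monotone path from $(0,0)$ to $(2,2)$ — not actually ``L-shaped,'' but that is only a terminological slip), and deduce fixed-point freeness of $T^k$ from the uniqueness of the representing connection. The paper merely presents $G$ first by generators and relations and then identifies it as a square group, whereas you define the square base directly; this is a purely expositional difference.
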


\begin{remark}\label{3slop}
Informally, this proposition asserts that the algebraic relations
\eqref{slop} for various $r \in \bbZ$ are independent of each
other. In contrast, with progressions of length three (i.e. in the case $k=3$) the
analogous relations are highly degenerate.  Indeed, suppose that
\begin{equation}\label{err}
 e_0 (T^r e_1) (T^{2r} e_2) = \id
 \end{equation}
for all $r \in A$.  Then if $r, r+h$ lie in $A$, we have
$$ e_0 (T^r e_1) (T^{2r} e_2) = e_0 (T^r T^h e_1) (T^{2r} T^{2h} e_2)$$
which we can rearrange as
$$ (T^h e_1^{-1}) e_1 = T^{r} ( (T^{2h} e_2) e_2^{-1} ).$$
If $r,r+h,r',r'+h$ lie in $A$, we thus have
$$ T^{r} ( (T^{2h} e_2) e_2^{-1} ) = T^{r'} ( (T^{2h} e_2) e_2^{-1} ).$$
Assuming that $T^{r'-r}$ has no fixed points, we conclude that
$(T^{2h} e_2) e_2^{-1}$ is the identity; assuming that $T^{2h}$ has
no fixed points either, we conclude that $e_2$ is the identity.
Similar arguments can be used to show that $e_0$ and then $e_1$ are
also the identity.  Thus the relations \eqref{err} and the
no-fixed-points hypothesis lead to a total collapse of the group
generated by $e_0,e_1,e_2$ as soon as $A$ contains even a single
non-trivial parallelogram $r, r+h, r', r'+h$.  (A variant of this
argument also shows that if \eqref{err} is obeyed for $r$ and $r+h$,
then it is also obeyed for $r+2h$ even without the fixed point
hypothesis.) This algebraic distinction between triple recurrence
and quadruple recurrence can be viewed as the primary reason why
recurrence and convergence results continue to hold for triple
products, but not for quadruple products even under the assumption
of ergodicity (which is reflected here in the no-fixed-points
assumption).\fin
\end{remark}

\begin{proof}  We let $G$ be the group generated by the generators $e_{i,n}$ for $i=0,1,2,3$ and $n \in \bbZ$,
subject to the relations
$$
e_{0,n} e_{1,n+r} e_{2,n+2r} e_{3,n+3r} = \id$$ for all $n \in \bbZ$
and $r \in A$.  As the set of such relations is invariant under the
shift $e_{i,n} \mapsto e_{i,n+1}$, we see that we can define an
automorphism $T: G \to G$ by setting $T e_{i,n} := e_{i,n+1}$.  If
we then set $e_i := e_{i,0}$, it is clear that \eqref{slop} holds
for all $r \in A$.

To see that \eqref{slop} fails for $r \not \in A$, we observe that
$G$ can be viewed as a square group, with the horizontal generators
$\{ e_{i,n}: i = 0,2; n \in \bbZ \}$ and vertical generators $\{
e_{i,n}: i = 1,3; n \in \bbZ\}$ and square relations $\Box$
consisting of $(e_{0,n}, e_{1,n+r}, e_{2,n+2r}, e_{3,n+3r})$ and its
cyclic permutations for all $n \in \bbZ$ and $r \in A$; note that
the crucial unique continuation property follows from the basic
observation that an arithmetic progression is determined by any two
of its elements (``two points determine a line'').  If $n \in \bbZ$
and $r \not \in A$, one sees that the connection on the path of
length four from $(0,0)$ to $(2,2)$ associated to the word $e_{0,n}
e_{1,n+r} e_{2,n+2r} e_{3,n+3r}$ is already a maximal reduced flat
connection (as none of the three squares that share two edges with
the path can be completed to a square from $\Box$) and so by
Corollary \ref{maxred}, its definite integral $e_{0,n} e_{1,n+r}
e_{2,n+2r} e_{3,n+3r}$ is not equal to the identity, as required.

Finally, to show that $T^k$ has no non-trivial fixed points, one
simply observes that $T^k$ will shift any non-trivial maximal
reduced flat connection to a different maximal reduced flat
connection, and then invokes Corollary \ref{maxred} again.
\end{proof}

Next, we establish a variant that is useful for showing negative
averages for quintuple recurrence (Theorem \ref{negavk}).

\begin{proposition}[Independence of AP5 relations]\label{apthy2}
There exists a group $G$ with distinct elements
$e_0,e_1,e_2,e_3,e_4$, together with an automorphism $T: G \to G$,
such that the relation
\begin{equation}\label{slop2}
e_0 (T^{r} e_1) (T^{2r} e_2) (T^{3r} e_3) (T^{4r} e_4) = \id
\end{equation}
holds for all $r \in \bbZ$.  Furthermore, no power $T^k$ of $T$
with $k \neq 0$ has any fixed points other than the identity
element $id$.  Finally, if $r \in \bbZ$ is nonzero, and
$$g_0,g_1,g_2,g_3,g_4 \in \{ id, e_0, e_1, e_2, e_3, e_4, e_0^{-1}, e_1^{-1}, e_2^{-1}, e_3^{-1}, e_4^{-1} \}$$
are such that
\begin{equation}\label{slop3}
g_0 (T^{r} g_1) (T^{2r} g_2) (T^{3r} g_3) (T^{4r} g_4) = \id,
\end{equation}
then $g_0,g_1,g_2,g_3,g_4$ are either equal to the identity, or
are a permutation of $\{e_0,e_1,e_2,e_3,e_4\}$ or of $\{e_0^{-1},
e_1^{-1}, e_2^{-1}, e_3^{-1}, e_4^{-1} \}$.
\end{proposition}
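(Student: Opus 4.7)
The plan is to follow the template of Proposition \ref{apthy}. I define $G$ to be the group with generators $\{e_{i,n} : i = 0,1,2,3,4,\ n \in \bbZ\}$ and relations
$$ e_{0,n}\, e_{1,n+r}\, e_{2,n+2r}\, e_{3,n+3r}\, e_{4,n+4r} = \id \qquad (n, r \in \bbZ).$$
Because these relations are shift-invariant, the map $e_{i,n} \mapsto e_{i,n+1}$ extends to an automorphism $T$ of $G$, and setting $e_i := e_{i,0}$ makes \eqref{slop2} immediate from the presentation.

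To establish the no-fixed-points claim and the rigidity claim \eqref{slop3}, I would develop a ``pentagon group'' analogue of the square-group machinery of Appendix \ref{groupthy}. Since a $5$-term arithmetic progression is uniquely determined by any two of its consecutive terms, the pentagonal analogue of the unique continuation axiom from Definition \ref{flatcon} holds, and the geometric theory of monotone paths, monotone regions, flat connections, reductions, and concatenations transfers almost verbatim with ``pentagon'' replacing ``square'' -- the 1-cells are now cyclically coloured with $5$ colours instead of alternating between horizontal and vertical, and the 2-cells are pentagonal instead of square, but the internal logic of Lemma \ref{unimax}, Definition \ref{concat}, and Corollary \ref{maxred} goes through. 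The outcome is a bijection between $G$ and the set of maximal reduced flat connections starting at a fixed base point, and the no-fixed-points claim follows at once since $T$ acts on the combinatorial data by simple translation, sending any nontrivial maximal reduced flat connection to a distinct one.

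For the rigidity claim (3), I would first apply the abelianisation $\pi: G \to \bbZ^5 / \langle f_0 + f_1 + f_2 + f_3 + f_4 \rangle$ defined by $\pi(e_{i,n}) := f_i$. Writing $c_j := \#\{i : g_i = e_j\} - \#\{i : g_i = e_j^{-1}\}$, the identity \eqref{slop3} forces the tuple $(c_0,\dots,c_4)$ to lie in the kernel of the quotient map, i.e.\ to be of the form $(k,\dots,k)$; since at most five $g_i$'s each contribute $\pm 1$ to exactly one coordinate, one has $|k| \leq 1$. The cases $k = \pm 1$ give precisely the two permutation conclusions in the statement. In the remaining case $k = 0$, each generator $e_j$ appears among the $g_i$'s the same number of times as its inverse, and the pentagonal normal form together with the no-fixed-points property forces all $g_i$ to be the identity: any other configuration would yield a nontrivial identity of the shape $T^{ar} e_j = T^{br} e_j$ with $a \neq b$, or a more elaborate analogue involving two distinct generators, neither of which is compatible with the normal form for $G$.

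The main obstacle is the development of the pentagonal normal form, since pentagons do not tile $\bbR^2$ and so the explicit planar picture of monotone regions in $\bbN^2$ used in Appendix \ref{groupthy} no longer applies; one must instead work with a more abstract 2-complex and verify carefully that the operations of extension, collapse, and ``popping'' across a pentagon remain well-defined and behave as expected. Aside from this combinatorial bookkeeping, and the case analysis for $k = 0$ in the rigidity step, the arguments parallel those of Appendix \ref{groupthy} closely.
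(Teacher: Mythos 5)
Your proposal diverges from the paper at the crucial structural step and leaves a genuine gap. You take $G$ to be the single group on all generators $e_{i,n}$ with the AP5 relations, and you propose to prove the normal-form facts (hence the no-fixed-points and rigidity claims) by developing a ``pentagon group'' analogue of the square-group machinery of Appendix~\ref{groupthy}. You correctly flag this as the main obstacle, but it is more than bookkeeping: the arguments in Lemma~\ref{unimax}, Definition~\ref{concat}, and Corollary~\ref{maxred} lean on the planar picture of monotone regions in $\bbN^2$, the two-colouring of edges as horizontal/vertical, and the fact that an adjacent pair of edges ``pops'' across a unique square. None of this transfers cleanly to $5$-cells, and nothing in your write-up indicates how the pentagonal 2-complex, the ``popping'' moves, or the maximal extension step would actually be set up. As written, the rigidity claim and the no-fixed-points claim rest entirely on this undeveloped theory.

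The paper avoids the pentagonal theory altogether by a product trick that you missed: it defines five groups $G^{(i)}$, each being the AP5-presented group with the $i$-th family of generators $e_{i,n}$ set to the identity, and takes $G := G^{(0)} \times \cdots \times G^{(4)}$. In each factor the $5$-term relation degenerates to a $4$-term relation, so $G^{(i)}$ is literally a \emph{square} group and the existing Appendix~\ref{groupthy} machinery applies verbatim; the no-fixed-points property is then inherited coordinatewise from Proposition~\ref{apthy}. This is a genuine shortcut, not just a stylistic choice. A second, smaller gap is in your $k=0$ case for the rigidity claim: you assert that ``the pentagonal normal form together with the no-fixed-points property forces all $g_i$ to be the identity,'' but the paper's case analysis is more delicate. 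For the configuration with one occurrence each of $e_j, e_j^{-1}, e_l, e_l^{-1}$, the paper applies a purpose-built homomorphism $G^{(4)} \to \bbZ$ with $n$-dependent images (using the identity $(n+r)-2(n+2r)+(n+3r)=0$ to verify compatibility with the relations), and for the residual configuration with the same pair $e_j, e_j^{-1}$ appearing twice it invokes Corollary~\ref{maxred} within a factor $G^{(i)}$. Neither step is captured by appealing generically to a normal form or to fixed-point freeness, so you would need to supply these arguments (or their pentagonal analogues) explicitly.
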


\begin{proof}  For each $i=0,1,2,3,4$, we define $G^{(i)}$ to be the group generated by the
generators $e_{j,n}^{(i)}$ for $j \in \{0,1,2,3,4\} \backslash
\{i\}$ and $n \in \bbZ$ subject to the relations
\begin{equation}\label{songo}
 e_{0,n}^{(i)} e_{1,n+r}^{(i)} e_{2,n+2r}^{(i)} e_{3,n+3r}^{(i)} e_{4,n+4r}^{(i)} = \id
\end{equation}
for all $n,r \in \bbZ$, with the convention that $e^{(i)}_{i,n} =
\id$ for all $n$.  This group has an automorphism $T^{(i)}: G^{(i)}
\to G^{(i)}$ that maps $e^{(i)}_{j,n}$ to $e^{(i)}_{j,n+1}$ for all
$n$.

We now set $G$ to be the product group $G := G^{(0)} \times G^{(1)} \times \ldots \times G^{(4)}$, and set
$$ e_j := ( e_{j,0}^{(0)}, e_{j,0}^{(1)}, \ldots, e_{j,0}^{(4)} )$$
for $j=0,1,2,3,4$.  We also set
$$ T( g^{(0)},g^{(1)}, \ldots, g^{(4)} ) := ( T^{(0)} g^{(0)}, T^{(1)} g^{(1)}, \ldots, T^{(4)} g^{(4)} ),$$
thus $T$ is an automorphism on $G$.  By construction it is clear
that \eqref{slop2} holds.  Also, by the arguments in Proposition
\ref{apthy}, no non-zero power of $T^{(i)}$ has any non-trivial
fixed points, and so the same is also true of $T$.

Now we establish the final claim of the proposition.  Suppose
$g_0,\ldots,g_4$ obey the stated properties.  Let $i = 0,1,2,3,4$,
and let $g_j^{(i)}$ be the $G^{(i)}$ component of $g_j$ for
$j=0,1,2,3,4$, thus
\begin{equation}\label{slash}
g_0^{(i)} ((T^{(i)})^{r} g_1^{(i)}) ((T^{(i)})^{2r} g_2^{(i)})
((T^{(i)})^{3r} g_3^{(i)}) ((T^{(i)})^{4r} g_4^{(i)}) = \id.
\end{equation}
From construction of $G^{(i)}$, we see that for any distinct $j,k
\in \{0,1,2,3,4\} \backslash \{i\}$, there is a homomorphism
$\phi^{(i)}_{j,k}: G^{(i)} \to \bbZ$ to the additive group $\bbZ$
that maps $e_{j,n}^{(i)}$ to $+1$, $e_{k,n}^{(i)}$ to $-1$, and
all other $e_{l,n}^{(i)}$ to zero for $n \in \bbZ$ and $l \in
\{0,1,2,3,4\} \backslash \{i,j,k\}$ (note that these requirements
are compatible with the defining relations \eqref{songo}).  This
homomorphism is $T^{(i)}$ invariant.  Applying this homomorphism
to \eqref{slash}, we obtain
$$ \sum_{l=0}^4 \phi^{(i)}_{j,k}( g_l^{(i)} ) = 0.$$

In other words, the number of times $g_l$ for $l=0,1,2,3,4$ equals
$e_j$, minus the number of times it equals $e_j^{-1}$, is equal to
the number of times $g_l$ equals $e_k$, minus the number of times
it equals $e_k^{-1}$.  Letting $j,k,i$ vary, we thus see that this
number is independent of $j$.  It is easy to see that this number
cannot exceed $1$ in magnitude, and if it is equal to $+1$ or
$-1$, then $g_0,g_1,g_2,g_3,g_4$ is a permutation of
$\{e_0,e_1,e_2,e_3,e_4\}$ or of $\{e_0^{-1}, e_1^{-1}, e_2^{-1},
e_3^{-1}, e_4^{-1} \}$ respectively.  (Note that this argument also ensures that $e_0,e_1,e_2,e_3,e_4$ are distinct.)  The remaining possibility to
eliminate is when this number is zero, thus each $e_i$ occurs in
$g_0,g_1,g_2,g_3,g_4$ as often as $e_i^{-1}$.  Suppose for
instance that $g_0,g_1,g_2,g_3,g_4$ contains one occurrence each
of $e_0, e_0^{-1}, e_1, e_1^{-1}$.  Applying \eqref{slash} with
$i=4$ (say), and then applying the homomorphism that maps
$e_{0,n}^{(4)}$ to zero, $e_{1,n}^{(4)}$ to $n$, $e_{2,n}^{(4)}$
to $-2n$, and $e_{3,n}^{(4)}$ to $n$ (here we use the identity
$(n+r)-2(n+2r)+(n+3r)=0$ to ensure consistency with \eqref{songo})
we obtain a contradiction.  Similarly if $g_0,g_1,g_2,g_3,g_4$
contains any other combination of one or two distinct pairs
$e_j,e_j^{-1}$.  The remaining case to eliminate is if
$g_0,g_1,g_2,g_3,g_4$ contains $e_j$ and $e_j^{-1}$ twice each for
some $j$, say $j=0$.  Applying \eqref{slash} with $i=4$ again, we
can use Corollary \ref{maxred} to contradict \eqref{slash} (as the
right-hand side is a definite integral of a maximal flat
connection on a horizontal path of length four).  Similarly for
other values of $j$, and the claim follows.
\end{proof}

\bibliographystyle{abbrv}
\bibliography{bibfile}

\end{document}